\documentclass[a4paper]{amsart}
\usepackage{amsmath}
\usepackage{amsthm}
\usepackage{amsfonts}

\usepackage[alphabetic]{amsrefs}
\usepackage[T1]{fontenc}
\usepackage[utf8]{inputenc}
\usepackage{lmodern}
\usepackage[english]{babel}
\usepackage{csquotes}

\usepackage{tikz}
\usepackage{tikz-cd}
\usetikzlibrary{knots}

\usepackage{adjustbox}
\usepackage{inconsolata}

\usepackage{mathtools}
\usepackage{oubraces}
\usepackage{amsthm}
\usepackage{amsbsy}
\usepackage{amssymb}
\usepackage{amsthm}
\usepackage{caption}

\theoremstyle{plain}

\newtheorem{conj}{Conjecture}[section]

\newtheorem{thm}[conj]{Theorem}
\newtheorem{lemma}[conj]{Lemma}

\newtheorem{definition}[conj]{Definition}
\newtheorem{corollary}[conj]{Corollary}
\theoremstyle{remark}
\newtheorem{remark}{Remark}[section]
\numberwithin{equation}{section}
\numberwithin{figure}{section}

\title{Formal groups over non-commutative rings}
\author{Christian Nassau}
\email{nassau@nullhomotopie.de}
\date{\today}

\keywords{formal group law, complex oriented cohomology, ring spectrum, quasi-symmetric function, Drinfeld double, quasi-determinant}
\subjclass{55N22, 05E05, 16T30}



\DeclareMathOperator*{\id}{id}

\newcommand{\ZZ}{{\mathbb Z}}

\newcommand{\CC}{\mathbb{C}}
\newcommand{\CP}{\mathbb{C}P}
\newcommand{\BFK}{\mathcal{B}}

\newcommand{\LC}{{\mathcal L}}

\newcommand{\Tria}{\mathcal{Y}}

\newcommand{\TriSpace}{\mathrm{Tri}}
\newcommand{\opposite}[1]{{#1}^{\mathrm{op}}}
\DeclareMathOperator{\Map}{Map}

\DeclareMathOperator{\Hom}{Hom}

\DeclareMathOperator{\Mod}{Mod}
\DeclareMathOperator{\Sym}{Sym}
\DeclareMathOperator{\QSym}{QSym}

\DeclareMathOperator{\NSym}{NSym}
\DeclareMathOperator{\Ass}{Ass}
\DeclareMathOperator{\UAss}{\mathcal{U}_{ass}}

\DeclareMathOperator*{\cotensor}{\Box}
\DeclareMathOperator*{\Tor}{Tor}

\DeclareMathOperator*{\len}{len}

\newcommand{\hur}{\mathrm{hur}}

\begin{document}

\begin{abstract}
We develop an extension of the usual theory of formal group laws
where the base ring is not required to be commutative
and where the formal variables need neither be central nor have to commute with each other.

We show that this is the natural kind of formal group law for the needs of
algebraic topology in the sense that a (possibly non-commutative)
complex oriented ring spectrum is canonically equipped with just such a formal group law.
The universal formal group law is carried by the Baker-Richter spectrum $M\xi$
which plays a role analogous to $MU$ in this non-commutative context.

As suggested by previous work of Morava the
Hopf algebra $\BFK$
of \enquote{formal diffeomorphisms of the non-commutative line}
of Brouder, Frabetti and Krattenthaler
is central to the theory developed here.
In particular, we verify Morava's conjecture that
there is a representation of the Drinfeld quantum-double $D(\BFK)$
through cohomology operations in $M\xi$.
\end{abstract}

\maketitle
\tableofcontents


\begin{section}{Introduction}
This paper is about non-commutative complex oriented cohomology theories:
these are functors $X\mapsto E^\ast X$ from spaces to graded rings that satisfy the Eilenberg-Steenrod axioms 
for cohomology, except possibly for the normalization axiom that governs the cohomology of a one-point space 
$E^\ast = E^\ast(\{\mathrm{pt.}\})$.
Such a cohomology theory is called \emph{complex oriented} if every 
complex line bundle $\LC \rightarrow X$ has an associated first Chern class $c_1^E(\LC)\in E^2(X)$.
The Chern class of the tensor product of two line bundles $\LC_1$, $\LC_2$ can then be expressed 
as \[ c_1^E(\LC_1\otimes \LC_2) = c_1^E(\LC_1) +_F c_1^E(\LC_2) 
= c_1^E(\LC_1) + c_1^E(\LC_2) + \sum_{i,j\ge 1} a_{i,j} c_1^E(\LC_1)^i c_1^E(\LC_2)^j. \]
Here $x+_Fy = x + y + \sum a_{i,j} x^i y^j$ is a formal addition law, 
by its geometric provenance
necessarily associative, commutative and unital, with coefficients $a_{i,j}\in E^\ast$.
This is the \emph{formal group law} associated to $E$
and its importance can hardly be overstated.

The classical theory considers only \emph{commutative} complex oriented cohomology theories.
For those the formal group law can conveniently be discussed in the language of formal algebraic geometry
for which there exists an extensive body of work (see \cite{MR2987372}).
Indeed, we believe that no example of a significantly\footnote{%
Morava $K$-theory at the prime $2$ is an example of a non-commutative complex oriented ring spectrum that has been studied a lot. 
But the non-commutativity can only be seen by working with odd-dimensional cohomology classes. A significantly non-commutative ring spectrum would be one where the coefficient ring itself and/or its ring of Chern classes is non-commutative.} 
non-commutative complex oriented cohomology theory
exists in the literature - with the single exception of the Baker-Richter spectrum $M\xi$ of \cite{MR2484353} and \cite{MR3243390}.

We here initiate a study of the formal group laws that can arise from non-commutative cohomology theories.
These deviate from the classical commutative ones in several points:
\begin{enumerate}
    \item The ring of definition $E_\ast$ can be non-commutative.
    \item Adjoining a formal variable $x$ by looking at $E^\ast \CP^\infty = E^\ast[[x]]$ gives an $x$ that is not central: there will be a non-trivial commutation relation between this $x$ and the scalars from $E_\ast$.
    \item Adjoining a second formal variable $y$ by looking at $E^\ast (\CP^\infty \times \CP^\infty)= E^\ast[[x,y]]$ gives a $y$ that does not commute with either the scalars from $E_\ast$ nor with the first variable $x$: there will be a non-trivial commutation relation between $x$ and $y$.
    \item Complex vector bundles do admit a theory of Chern classes since 
    $E^\ast BU(n)$ can still be expressed as $E^\ast[[c_1,c_2,\ldots,c_n]]$ by the collapsing of the Atiyah-Hirzebruch spectral sequence. But these Chern-classes will not be central, nor will they commute among each other. Their definition is also emburdened with choices that impact their canonicity; as a consequence there will be no easy Whitney-sum formula for the $c_n(E\oplus F)$. 
\end{enumerate}
We will begin the study of such formal group laws in section~\ref{sec:bfk}
by showing that the commutation relations between scalars and formal variables can be elegantly expressed 
as an action of the Hopf algebra of \enquote{formal diffeomorphisms of the non-commutative line}
defined by Brouder, Frabetti, and Krattenthaler in \cite{MR2200854}.
In that framework the formal group law appears as a braided Hopf algebra structure defined on a ring of formal power series.

We then set course for proving that the formal group law of $M\xi$ is in fact the \emph{universal}
one-dimensional commutative formal group law, just like the formal group law of the 
complex bordism theory $MU$ is the universal  one-dimensional commutative formal group law
\emph{that is defined over a commutative ring}. This is complemented by showing that $M\xi$ is itself universal 
as a complex oriented cohomology theory 
(in the same sense that $MU$ is the universal \emph{commutative} complex oriented cohomology theory).

We spend some time investigating the fate of Chern classes (equivalently, of symmetric functions) 
for a non-commutative $E$. We show that the work of the Gelfand school \cite{MR2132761} on quasi-determinants and non-commutative symmetric functions is highly relevant here: it provides the tools necessary to define the Chern classes.
These classes are essential for understanding the connection between $E$ and the complex cobordism spectrum $MU$.
They define a (non-canonical and non-multiplicative) map $MU\rightarrow E$ 
that becomes useful when we later investigate the connection between $MU$ and $M\xi$:
we show in section~\ref{sec:splitting} that $M\xi$ splits (non-multiplicatively) as a suspension of copies of $MU$. 
More specifically, we show that there is a large free associative subalgebra $\Tria_\ast \subset M\xi_\ast$ 
and a non-multiplicative decomposition $M\xi_\ast \cong \Tria_\ast \otimes MU_\ast$ that lifts to a splitting 
of the spectrum $M\xi$.
(That $M\xi$ splits $p$-locally as a wedge of copies of the Brown-Peterson spectrum $BP$ had already been established in \cite{MR2484353}).

Throughout the work is complemented by as many explicit computations as we could reasonably provide: 
we give formulas for the universal formal group law, for the universal commutation relations 
and for the Chern classes, for example.  
We hope these computations help to better appreciate the nature of this new class of 
formal group law that lives in a non-commutative world.   
The computations were made with the help of the SageMath computer algebra system 
\cite{sagemath}.  The code for these computations will be published separately.

There remains just one question to answer before the main exposition can commence: if there is just one example 
of a significantly non-commutative complex oriented ring spectrum in the literature, then why bother?
Our answer is that we believe that non-commutative complex oriented ring spectra 
have to exist in abundance; they just haven't been discovered yet. 
The one example that is known, $M\xi$, is universal and its formal group law is 
richer and more complicated than the formal group law of $MU$ by an order of magnitude. 
It's inconceivable that there would not be interesting non-commutative complex oriented 
cohomology theories at other heights of the chromatic hierarchy.

All the rings that we use in the sequel are implicitly to be considered as \emph{graded} rings
and where we deal with formal variables we will always assume an implicit completion.
We decided to keep mostly silent about the grading and the completion in the body of the text 
since we felt that being explicit about it would not make the exposition more readable. 
\end{section}


\begin{section}{Formal group laws as braided Hopf algebras}\label{sec:bfk}
\begin{subsection}{The Brouder--Frabetti--Krattenthaler Hopf algebra}
Let $S=\ZZ[[x]]$ be the ring of formal power series in one variable
and let $\BFK = \ZZ\langle \phi_1,\phi_2,\ldots\rangle$ be the free associative
algebra on generators $\phi_k$, $k\ge 1$.
Consider the coaction map
$$\Delta\colon{}S \rightarrow \BFK \otimes S,
\qquad \qquad x \mapsto \sum_{k\ge 0} \phi_k \otimes x^{1+k}
\qquad \text{(with $\phi_0=1$)}.$$
\begin{lemma}\label{lem:first}
    There is a unique Hopf algebra structure on $\BFK$ for which $S$ becomes a
    $\BFK$-comodule algebra (see~\ocite{MR1243637}, 4.1.2) with this coaction.
\end{lemma}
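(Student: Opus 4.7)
The plan is to force the Hopf-algebra structure on $\BFK$ out of the comodule-algebra axioms, extend it by freeness, and verify the axioms via a faithfulness argument on $S$.

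For uniqueness: since $\Delta$ must be an algebra map $S\to\BFK\otimes S$, the prescription on $x$ uniquely determines $\Delta$ on all of $S$. The counit axiom $(\varepsilon\otimes\id_S)\Delta=\id_S$ evaluated on $x$, combined with linear independence of the powers of $x$ in $S$, forces $\varepsilon(\phi_0)=1$ and $\varepsilon(\phi_k)=0$ for $k\ge 1$. The coassociativity axiom $(\id_\BFK\otimes\Delta)\Delta=(\Delta_\BFK\otimes\id_S)\Delta$ evaluated on $x$, after expanding $\Delta(x^{1+k})=\Delta(x)^{1+k}$ and matching coefficients of $x^{1+n}$, forces
\[
\Delta_\BFK(\phi_n)\;=\;\sum_{k+j_1+\cdots+j_{1+k}\,=\,n}\,\phi_k\otimes\phi_{j_1}\phi_{j_2}\cdots\phi_{j_{1+k}}.
\]
Freeness of $\BFK$ on $\phi_1,\phi_2,\ldots$ extends $\varepsilon$ and $\Delta_\BFK$ uniquely to algebra maps on all of $\BFK$, settling uniqueness.

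For existence, I verify the bialgebra axioms on $\BFK$. Both sides of the coassociativity identity $(\Delta_\BFK\otimes\id_\BFK)\Delta_\BFK=(\id_\BFK\otimes\Delta_\BFK)\Delta_\BFK$ and of the counit axioms are compositions of algebra maps, hence algebra maps out of $\BFK$; two algebra maps out of a free algebra that agree on generators coincide. It therefore suffices to check each axiom on the $\phi_n$. The key step is a faithfulness trick: apply $\id_{\BFK\otimes\BFK}\otimes\Delta$ to both sides of $(\id\otimes\Delta)\Delta(x)=(\Delta_\BFK\otimes\id)\Delta(x)$ and simplify each side by one further application of the defining identity. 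The resulting equation in $\BFK^{\otimes 3}\otimes S$ reads
\[
\sum_n (\id\otimes\Delta_\BFK)\Delta_\BFK(\phi_n)\otimes x^{1+n} \;=\; \sum_n (\Delta_\BFK\otimes\id_\BFK)\Delta_\BFK(\phi_n)\otimes x^{1+n},
\]
and linear independence of the $x^{1+n}$ in $S$ then extracts coassociativity on each $\phi_n$. The counit axioms on $\phi_n$ follow immediately from the explicit formula for $\Delta_\BFK$.

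For the antipode, equip $\BFK$ with the grading $|\phi_k|=k$; the formula for $\Delta_\BFK$ preserves this grading and $\BFK_0=\ZZ\cdot 1$ (all generators have positive degree), so $\BFK$ is a connected graded bialgebra. An antipode therefore exists and is unique by the standard inductive convolution-inverse construction. The principal obstacle I foresee is the bookkeeping in the existence step: tracking which tensor factors live in $\BFK$ versus in $S$ while iterating the coaction, so that faithfulness of the topological $\ZZ$-basis $\{x^n\}$ of $S$ can be brought to bear to project identities from $\BFK^{\otimes 3}\otimes S$ down to $\BFK^{\otimes 3}$.
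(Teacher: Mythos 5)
Your uniqueness argument is the same as the paper's: comodule coassociativity applied to $x$ forces $\Delta_\BFK(\phi_n)=\sum_{k+j=n}\phi_k\otimes\bigl(\sum_{j_1+\cdots+j_{1+k}=j}\phi_{j_1}\cdots\phi_{j_{1+k}}\bigr)$, the counit axiom forces $\varepsilon(\phi_k)=\delta_{k,0}$, and freeness of $\BFK$ extends these uniquely to algebra maps. Where you genuinely diverge is existence: the paper stops after the coefficient computation and cites Brouder--Frabetti--Krattenthaler for the verification that these formulas define a Hopf algebra, whereas you verify the axioms directly, and your verification is sound --- coassociativity on the $\phi_n$ does drop out of iterating the coaction on $x$ and comparing coefficients of the $x^{1+n}$, the counit check is immediate, and the connected-graded convolution-inverse construction of the antipode (with $|\phi_k|=k$, which your coproduct formula visibly preserves) is standard and applies verbatim in this non-commutative, non-cocommutative setting. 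The one step to make explicit is that your \enquote{one further application of the defining identity} uses $(\id\otimes\Delta)\Delta=(\Delta_\BFK\otimes\id)\Delta$ on the powers $x^{1+n}$, not merely on $x$; this is exactly where multiplicativity enters, since both sides are (continuous) algebra maps $S\to\BFK\otimes\BFK\otimes S$ agreeing on the topological generator $x$ --- the same principle you invoke for maps out of the free algebra $\BFK$, now applied to $S=\ZZ[[x]]$. With that spelled out, your route buys a self-contained proof at the cost of the bookkeeping you anticipate, while the paper's route is shorter but delegates existence to \cite{MR2200854}. (Incidentally, your explicit formula, with $1+k$ factors in the right-hand tensor leg, is the correct one; the superscript in the paper's $Q_q^{(p)}$ should be read as $1+p$.)
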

\begin{proof}
It is straightforward to compute the coproducts $\Delta(\phi_n)$:
one has
\begin{align*}
    \Delta (x^n) &= \sum_{i_1,\ldots,i_n\ge 0} \phi_{i_1}\cdots\phi_{i_n}\otimes x^{n+i_1+\cdots+i_n}
    = \sum_{j\ge 0} Q_j^{(n)}(\phi) \otimes x^{n+j},
\intertext{where $Q_j^{(n)}(\phi) = \sum_{i_1+\cdots+i_n=j} \phi_{i_1}\cdots\phi_{i_n}$. Therefore}
    (\id\otimes\Delta) \Delta x &= \sum \phi_n \otimes Q_j^{(n)}(\phi) \otimes x^{n+j}
\intertext{From $(\Delta\otimes\id)\Delta x = (\id\otimes\Delta)\Delta x$ one then finds}
\Delta(\phi_n) &= \sum_{p+q=n} \phi_p \otimes Q_q^{(p)}(\phi).
\end{align*}
That this does indeed define a Hopf algebra structure has been proved in~\cite{MR2200854}.
\end{proof}
The Hopf algebra $\BFK$ was first described by Brouder, Frabetti and Krattenthaler in~\cite{MR2200854}.
It is known as the Hopf algebra of \enquote{formal diffeomorphisms of the non-commutative line}.

Now consider a ring $R$ with a left action of $\BFK$.
We assume that $R$ is a $\BFK$-module algebra,
i.e.~that $\phi(ab) = \sum \phi'(a)\phi''(b)$
for $a,b\in R$, $\phi\in \BFK$ and $\Delta \phi = \sum \phi' \otimes \phi''$.
Under this condition the \enquote{smash product} $R \# S$ (\ocite{MR1243637}, 4.1.3) is defined:
this is the tensor product $R \otimes_\ZZ S$
with 
the multiplication
\[(a\#h) \cdot (b\#k) = \sum a\cdot(h'b) \# h''k.\]

Since $S=\ZZ[[x]]$ one can more explicitly see that $R\# S = R[[x]]$
where $R[[x]]$ is defined to be the free left $R$-module with basis $x^k$, $k\ge 0$.
It here carries an additional $R$-algebra structure
determined by the $\BFK$-action on $R$ via
\[x \cdot r = \sum_{k\ge 0} \phi_k(r) \cdot x^{1+k}.\]

\end{subsection}
\begin{subsection}{Braided Hopf algebras}
We will see below that a formal group law over $R$ can best be described as a certain
braided Hopf algebra structure on $R\# S$. For this we need to recall the definition
of a braided Hopf algebra.
The mandatory reference is
Takeuchi's survey \cite{MR1800719}*{Ch. 5}.

Consider a ground ring $R$ (allowed to be non-commutative) and an $R$-bimodule $T$.
In the spirit
of \cite{MR1800719}*{Def.~5.1}
a braided $R$-Hopf algebra structure on $T$
consists of six $R$-bilinear maps
\begin{align*}
    m &\colon{} T\otimes_R T \rightarrow T,
    & u &\colon{} R\rightarrow T,
    & \Upsilon &\colon{} T\otimes_R T \rightarrow T\otimes_R T,
    \\
    \Delta &\colon{} T \rightarrow T\otimes_R T,
    & \epsilon &\colon{} T\rightarrow R,
    & \chi &\colon{} T\rightarrow T.
\end{align*}
The required conditions are
\begin{enumerate}
    \item $(T,m,u)$ is an $R$-algebra.
    \item $(T,\Delta,\epsilon)$ is an $R$-coalgebra.
    \item $\Upsilon$ satisfies the Yang-Baxter equation.
    \item\label{commcond} $m$, $u$, $\Delta$ and $\epsilon$ \enquote{commute with $\Upsilon$}.
    \item $\epsilon\colon{} T\rightarrow R$ is multiplicative, $u : R\rightarrow T$ is comultiplicative.
    \item $\Delta$ is multiplicative and $m$ is comultiplicative
    with respect to the algebra/coalgebra structure on $T\otimes_RT$ defined by $\Upsilon$,
    i.e.
    $$\Delta \circ m =
    \overunderbraces{&&\br{2}{\text{comultiplication on $T\otimes_RT$}}}%
    {&(m\otimes m) \,&\,(\id\otimes\Upsilon\otimes\id) \,&\,(\Delta\otimes\Delta)}%
    {&\br{2}{\text{multiplication on $T\otimes_RT$}}&}.$$
    \item $\chi$ satisfies the usual antipode relations.
\end{enumerate}
Here the Yang-Baxter equation in (3) is
\begin{equation}
    \label{yangbaxter}
    (\Upsilon\otimes\id)
    (\id\otimes\Upsilon)
    (\Upsilon\otimes\id)
    =
    (\id\otimes\Upsilon)
    (\Upsilon\otimes\id)
    (\id\otimes\Upsilon).
\end{equation}
Condition \ref{commcond} needs some elaboration.
As explained in \cite{MR1800719}*{page 311} commutation of $m$ with $\Upsilon$
translates to the two relations
\begin{equation}\label{ycomm1}
\begin{aligned}
    (m{}\otimes\id)(\id\otimes\Upsilon)(\Upsilon\otimes\id) &= \Upsilon(\id\otimes m) \\
    (\id\otimes{}m)(\Upsilon\otimes\id)(\id\otimes\Upsilon) &= \Upsilon(m\otimes\id).
\end{aligned}
\end{equation}
According to \cite{MR1273649}*{Prop.~2.2} these conditions ensure that the multiplication
$m_2 = (m\otimes m)(\id\otimes\Upsilon\otimes\id)$ on $T\otimes_RT$ is associative.
Likewise, commutation of $\Upsilon$ and $u$ means
\begin{equation}\label{ycomm2}
    \Upsilon(u\otimes\id) = \id\otimes u, \quad \Upsilon(\id\otimes u) = u \otimes\id
\end{equation}
and these imply that $m_2$ is unital with unit $u\otimes u$ (\cite{MR1273649}*{Prop.~2.3}).

We can now say in simple (though probably somewhat opaque) words what
a formal group law over $R$ is:
\begin{definition}\label{def:fg}
Let $R$ be a $\BFK$-module algebra. Write $T=R\# S$ and let $m$ and $u$ denote the
multiplication resp.~unit map of $T$. Let $\epsilon \colon{} T\rightarrow R$
be given by
\[\epsilon(r\# 1) = r,\qquad \epsilon(r\#x^{1+k}) = 0.\]
A \emph{one-dimensional commutative formal group law} over $R$ consists of
three $R$-bilinear maps
\[\Delta \colon{} T \rightarrow T \otimes_R T, \qquad
\Upsilon \colon{} T \otimes_R T \rightarrow T \otimes_R T, \qquad
\chi \colon{} T\rightarrow T\]
such that $(T,m,u,\Upsilon,\Delta,\epsilon,\chi)$
is a commutative and cocommutative braided $R$-Hopf algebra.
\end{definition}
We have several remarks that help to
better understand why this structure
really represents a formal group law.

\begin{remark}
    The $\BFK$-action on $R$ turns $R[[x]]$ into an algebra.
The action is equivalent to giving commutation rules
\[x \cdot r = r\cdot x + \sum_{k\ge 1} \phi_k(r) x^{1+k}.\]
\end{remark}

\begin{remark}
    $\Upsilon$ defines the algebra structure on $T\otimes_RT = R[[x,y]]$.
It determines, and is determined by, a commutation rule
\[y\cdot x = \sum_{i,j\ge 0} \Upsilon_{i,j}x^{1+i}y^{1+j}.\]
This follows because by definition $\Upsilon(x\otimes x)$ computes $y\cdot x$ in
$R[[x,y]]$.
The compatibility relations $(\epsilon\otimes\id)\Upsilon = (\id\otimes\epsilon)$
and $(\id\otimes\epsilon)\Upsilon = (\epsilon\otimes\id)$ imply
$(\epsilon\otimes\id) \Upsilon(x\otimes x)
= (\id\otimes\epsilon) \Upsilon(x\otimes x) = 0$
which explains why the coefficients
$\Upsilon_{-1,j}$ or $\Upsilon_{i,-1}$ are not needed.
More restrictions on the $\Upsilon_{i,j}$ follow from the commutativity assumption (see remark \ref{commexpl} below).
\end{remark}

\begin{remark}
    The Yang-Baxter equations for $\Upsilon$
arrange that $\Upsilon$ can be used to give a well-defined algebra structure
on all tensor powers
\[T\otimes_R \cdots \otimes_R T = R[[x_1,x_2,\ldots,x_n]].\]
Here for all $r\in R$, $1\le j\le n$ one has
$$x_j r = \sum_{k\ge_0} \phi_k(r) x_j^{1+k}
\qquad
\text{and for $l>k$}
\qquad
x_l x_k = \sum_{i,j\ge 0} \Upsilon_{i,j} x_k^{1+i} x_l^{1+j}.$$
\end{remark}

\begin{remark}
The coproduct $\Delta \colon{} R[[x]] \rightarrow R[[x,y]]$ is multiplicative,
hence determined by
the image of $x$ which we write as $x+_Fy$:
\[x+_F y := \Delta(x).\]
From $(\epsilon\otimes\Delta) = \id = (\id\otimes\epsilon)\Delta$ one finds
\[x+_Fy = x + y + \sum_{i,j\ge 0} a_{i,j} x^{1+i}y^{1+j}.\]
The associative law for $\Delta$ translates to $x+_F(y+_Fz) = (x+_Fy)+_Fz$
and with $[-1](x) = \chi(x)$ one has $x+_F [-1](x) = [-1]_F(x) +_F x = 0$.
\end{remark}

\begin{remark}
As in the classical commutative case the $\chi$ can be reconstructed from $\Delta$,
hence could be dropped from the data defining the formal group law.
\end{remark}

\begin{remark}
The cocommutativity $\Upsilon \Delta = \Delta$ asks that
$x+_Fy = y+_F x$. So in particular the usual multiplicative formal group law
\[x+_Fy = x + y + xy\]
is \emph{not} in general commutative, unless $xy = yx$ in $R[[x,y]]$.
\end{remark}

\begin{remark}\label{commexpl}
    The commutativity $m\Upsilon = m$ does \emph{not} require
    any of $R$ or $T$ or $T\otimes_RT$ to be commutative rings.
    Instead it asks that the specialization map
    $T\otimes_RT = R[[x,y]] \rightarrow R[[x]]$ with $f(x,y) \mapsto f(x,x)$
    be invariant under $\Upsilon$.
    For $f(x,y) = yx$, for example, this gives
    \[x^2 = \sum_{i,j\ge 0} \Upsilon_{i,j} x^{2+i+j}\]
    from which one can deduce $\Upsilon_{0,0}=1$ and
    for $k>2$
    the weak anti-symmetry condition $\sum_{i+j=k} \Upsilon_{i,j} = 0$.
\end{remark}

\begin{remark}
For any ring $R$ one can consider the trivial $\BFK$ action with
$\phi_k(r) = 0$ for all $r\in R$, $k\ge 1$. This makes $x$ central in $R[[x]]$
and one can likewise let $\Upsilon = \id$ to achieve $xy = yx$. For commutative $R$
this then recovers the usual notion of a commutative one-dimensional formal group law.
\end{remark}

\begin{remark}
Schauenburg observed in \cite{MR1656075} that in the presence of an antipode
the braiding $\Upsilon$
can be recovered from the multiplication and comultiplication:
one has
\begin{equation}\label{schauenburg}
    \Upsilon = (m\otimes m) (\chi \otimes \Delta m \otimes\chi) (\Delta\otimes\Delta).
\end{equation}
He shows that a braided commutative Hopf algebra like ours always has $\Upsilon^2 = \id$.
Schauenburg's formula (\ref{schauenburg}) can be given an amusing derivation using
formal group notation:
\[\begin{tikzcd}[column sep = small]
    {T\otimes_R T} && {(x,y)} \\
    {T\otimes_R T \otimes_R T\otimes_R T} && {(x_1+_Fx_2,y_1+_Fy_2)} \\
    {T\otimes_R T \otimes_R T} && {(x_1+_Fa,a+_Fy_2)} \\
    {T\otimes_R T \otimes_R T\otimes_R T} && {([-1](x_1)+_Fa_1+_Fa_2,a_1+_Fa_2+_F[-1](y_2))} \\
    {T\otimes_R T} && {\underset{\substack{\quad\\\quad\\=\,(y,x)}}{([-1](x)+_Fx+_Fy,x+_Fy+_F[-1](y)}}
    \arrow["\Delta\otimes\Delta"', from=1-1, to=2-1]
    \arrow["\id\otimes m\otimes\id"', from=2-1, to=3-1]
    \arrow["\chi\otimes\Delta\otimes\chi"', from=3-1, to=4-1]
    \arrow["m\otimes m"', from=4-1, to=5-1]
    \arrow[maps to, from=1-3, to=2-3]
    \arrow[maps to, from=2-3, to=3-3]
    \arrow[maps to, from=3-3, to=4-3]
    \arrow[maps to, from=4-3, to=5-3]
\end{tikzcd}\]
\end{remark}

\end{subsection}
\begin{subsection}{Triangular Hopf module algebras over $\BFK$}
In order to understand formal group laws it makes sense to 
consider the underlying braiding $\Upsilon$ separately from the formal group data $(\Delta,\chi)$.

Let $R$ be a Hopf module algebra over $\BFK$. 
\begin{definition}\label{def:triangular}
    A \emph{triangular structure on $R$} is a $\Upsilon \colon{} T\otimes_RT \rightarrow T\otimes_RT$
    with $\Upsilon^2 = \id$
    that satisfies the Yang-Baxter equation (\ref{yangbaxter})
    and commutes with $m\colon{}T\otimes_RT\rightarrow T$ and $u:R\rightarrow T$ 
    in the sense of equations (\ref{ycomm1}) and (\ref{ycomm2}).
\end{definition}
The pair $(R,\Upsilon)$ is then called a \emph{triangular $\BFK$-Hopf module algebra}.
We chose the name \enquote{triangular} 
because the Yang-Baxter equation is historically also known as the \enquote{star-triangle relation}
and as such has already inspired similar names in the Hopf algebra literature.

Let $(R,\Upsilon)$ be a triangular $\BFK$-Hopf module algebra. One has 
\[yx = \sum_{p,q} \Upsilon_{p,q} x^{1+p}y^{1+q} \equiv \psi(x) y \mod y^2\]
for some 
\[\psi(x) = \sum_{k\ge 0} \Upsilon_{k,0}x^{1+k} = x + \Upsilon_{1,0}x^2 + \Upsilon_{2,0}x^3 + \cdots.\]
The triangular structure is called \emph{strict} if one has $\psi(x)=x$ here.
The following Lemma shows that every $(R,\Upsilon)$ can be strictified by a coordinate change\footnote{%
Secretly this proof is modeled on a hypothetical representation $x\mapsto Z_0T + Z_1T^2 + Z_2T^3 + \cdots$,
similar to the one we will establish below for $M\xi$ in Lemma \ref{lem:hurlem}. 
For the strict triangular structure of $M\xi$ one has $Z_0=1$ whereas we here merely assume $Z_0$ to be invertible. 
This analogy suggests that replacing $x$, $y$ with 
$Z_0^{-1}x$, $Z_0^{-1}y$ strictifies the triangular structure.}
$x \leftrightarrow \lambda x$, possibly after adjoining a suitable $\lambda$:
\begin{lemma}
Let $R$ be a triangular $\BFK$-Hopf module algebra with braiding $yx = \sum\Upsilon_{i,j}x^{1+i}y^{1+j}$.
Then $R'=R[\lambda,\lambda^{-1}]$ admits a triangular $\BFK$-Hopf module algebra structure extending the one on $R$ with 
$\phi_k(\lambda) = \lambda\Upsilon_{k,0}$. 
In this structure one has
\begin{equation}\label{eq:lamlem}
    (\lambda y) (\lambda x) = \sum_{i,j\ge 0} \tilde \Upsilon_{i,j} (\lambda x)^{1+i} (\lambda y)^{1+j}
\end{equation}
with $\tilde \Upsilon_{k,0} = \tilde\Upsilon_{0,k} = 0$ for $k\not=0$.
\end{lemma}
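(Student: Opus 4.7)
The plan is three-fold: (a) verify that the prescription $\phi_k(\lambda) = \lambda\Upsilon_{k,0}$ extends to a consistent $\BFK$-module algebra structure on $R'$; (b) extend the triangular structure $\Upsilon$ to $R'$ by $R'$-bilinearity; (c) compute $(\lambda y)(\lambda x)$ in the new variables $X = \lambda x$, $Y = \lambda y$ and read off the new boundary coefficients.

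For (a), the formula $\phi_k(\lambda) = \lambda\Upsilon_{k,0}$ encodes the commutation rules $x\lambda = \lambda\psi(x)$ and $y\lambda = \lambda\psi(y)$ in $R'[[x]]$ and $R'[[y]]$, where $\psi(t) = \sum_k \Upsilon_{k,0} t^{1+k}$. Consistency amounts to verifying that the $\BFK$-module algebra axiom $\phi_n(ab) = \sum_{p+q=n}\phi_p(a) Q_q^{(p)}(\phi)(b)$ remains valid when $a$ or $b$ involves $\lambda$, with $\lambda$ taken to be central in $R'$. This reduces to a bookkeeping identity that can be verified using the iterated formula $y^n \cdot r = \sum_m Q_m^{(n)}(\phi)(r)\,y^{n+m}$ in $R[[y]]$. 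Step (b) is essentially formal: the Yang-Baxter equation, $\Upsilon^2 = \id$, and the compatibilities (\ref{ycomm1}) and (\ref{ycomm2}) are all identities among the coefficients $\Upsilon_{i,j}$ and $\phi_k$ that survive the scalar extension from $R$ to $R'$.

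For (c), the key computation uses $y\lambda = \lambda\psi(y)$ and $x\lambda = \lambda\psi(x)$:
\[
(\lambda y)(\lambda x) = \lambda(y\lambda)x = \lambda^2 \psi(y) x, \qquad XY = (\lambda x)(\lambda y) = \lambda^2 \psi(x) y.
\]
The vanishing $\tilde\Upsilon_{k,0} = 0$ for $k \ge 1$ is extracted by reducing modulo $y^2$: in the expansion $\psi(y) x = \sum_{k\ge 0} \Upsilon_{k,0} y^{1+k}x$, each summand with $k \ge 1$ has $y$-degree at least $2$, so only the $k=0$ term survives mod $y^2$, giving $\psi(y) x \equiv yx \equiv \psi(x) y \pmod{y^2}$. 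Hence $(\lambda y)(\lambda x) \equiv XY$ modulo the $R'$-submodule of elements of $y$-degree at least $2$. Because this submodule coincides with the one spanned by $\{X^a Y^b : b \ge 2\}$ (the scalars $\lambda^{a+b}$ are invertible and the change of basis is triangular), this forces $\tilde\Upsilon_{0,0} = 1$ and $\tilde\Upsilon_{k,0} = 0$ for $k \ge 1$.

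The main obstacle is the mirror claim $\tilde\Upsilon_{0,k} = 0$ for $k \ge 1$. A direct modulo-$x^2$ argument does not yield this, because $\psi(y)x - \psi(x)y$ retains nonzero $x$-degree-$1$ contributions in the $(x,y)$-basis. Instead one must appeal to the involution relation $\Upsilon^2 = \id$, which forces identities among the $\Upsilon_{i,j}$ relating the $x$-boundary coefficients $\Upsilon_{k,0}$ to the $y$-boundary coefficients $\Upsilon_{0,k}$. The prescription $\phi_k(\lambda) = \lambda\Upsilon_{k,0}$ is then precisely tailored (as the footnote's hypothetical representation $x \mapsto \lambda^{-1}T + Z_1T^2 + \cdots$ suggests) so that the rescaling $x \mapsto \lambda x$, $y \mapsto \lambda y$ strictifies both boundaries simultaneously. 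Making this precise at the level of formal identities in all degrees is the delicate part of the argument.
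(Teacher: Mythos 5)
Your reduction for the first boundary is sound and matches the paper: writing $X=\lambda x$, $Y=\lambda y$, the computation $YX=\lambda^2\psi(y)x\equiv\lambda^2 yx\equiv\lambda^2\psi(x)y=XY$ modulo the ideal generated by $y^2$, together with the observation that $y$ and $\lambda y$ generate the same ideal, does give $\tilde\Upsilon_{0,0}=1$ and $\tilde\Upsilon_{k,0}=0$ for $k\ge 1$. But the proposal has two genuine gaps. The main one is the mirror statement $\tilde\Upsilon_{0,k}=0$, which you explicitly leave open (``making this precise \ldots is the delicate part''). No new identities ``in all degrees'' extracted from $\Upsilon^2=\id$ are needed: since the triangular structure makes $\Upsilon$ a symmetric braiding compatible with the multiplication, the induced map $\Upsilon\colon R'[[x,y]]\to R'[[x,y]]$ is an algebra automorphism fixing the scalars and interchanging $x\leftrightarrow y$, hence carrying the ideal $(y)$ to the ideal $(x)$. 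Applying it to the congruence $\lambda x\,\lambda y\equiv\lambda y\,\lambda x \bmod (\lambda y)^2$ that you already established immediately yields the same congruence modulo $(\lambda x)^2$, which is exactly $\tilde\Upsilon_{0,k}=0$ for $k\ne 0$. So your assertion that ``a direct modulo-$x^2$ argument does not yield this'' is misleading: one does work modulo $x^2$, but by transporting the modulo-$y^2$ result through the symmetry rather than recomputing in the $(x,y)$-normal form.

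The second gap is step (a): you assert that consistency of $\phi_k(\lambda)=\lambda\Upsilon_{k,0}$ (with $\lambda$ commuting with $R$, and with $\lambda^{-1}$ adjoined) ``reduces to a bookkeeping identity that can be verified,'' but this well-definedness is precisely the nontrivial content --- e.g.\ the module-algebra axiom must be compatible with the relation $\lambda r=r\lambda$, which involves genuine identities among the $\Upsilon_{k,0}$, their $\phi$-translates and the iterated coproducts $Q^{(p)}_q(\phi)$. The paper avoids this verification altogether: the desired relation $x\lambda=\lambda\psi(x)$ holds modulo $y^2$ with $\lambda\rightsquigarrow y$ inside the (already consistent) algebra $R[[x,y]]$, so it holds on the nose in the associated graded of the $(y)$-adic filtration, and one simply takes $\lambda=\bar y$ and $R'=R[\bar y,\bar y^{-1}]$. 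Either supply that construction (or an honest direct verification of the identities), and supply the symmetry argument above, to close the proof.
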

\begin{proof}
One first needs to check that $\phi_k(\lambda) = \lambda\Upsilon_{k,0}$ 
does indeed define a $\BFK$-Hopf module algebra structure.
This comes down to verifying that the relation 
\begin{equation}\label{eq:lamdef} 
    x \lambda = \lambda x + \sum_{k\ge 0} \lambda \Upsilon_{k,0} x^{1+k}
\end{equation}
gives a consistent $R'$ algebra structure on $R'[[x]]$.

By assumption  \eqref{eq:lamdef} holds modulo $y^2$ in $R[[x,y]]$ with $\lambda \rightsquigarrow y$.
Let $R[[x,\bar y]]$
be the associated graded ring for the filtration by powers of the ideal $I=(y)\subset R[[x,y]]$.
Then \eqref{eq:lamdef} holds on the nose in $R[[x,\bar y]]=\left(R[[\bar y]]\right)[[x]]$ which shows that we can 
indeed set $\lambda = \bar y$ and $R'=R[\bar y, \bar y^{-1}]$.

From $\phi_k(\lambda) = \lambda\Upsilon_{k,0}$ one now gets $\psi(x) = x + \sum_{k\ge 1} \Upsilon_{k,0}x^{1+k} = \lambda^{-1} x \lambda$. 
We have $\lambda y \equiv y \lambda$ mod $y^2$, 
so
\[ \lambda x \lambda y \equiv \lambda^2 \psi(x) y \equiv \lambda^2 yx \equiv \lambda y \lambda x \mod y^2. \] 
Since $y$ and $\lambda y$ generate the same ideal this congruence also holds modulo $(\lambda y)^2$.
This proves that we have $\tilde\Upsilon_{k,0}=0$ for $k>0$ in the representation \eqref{eq:lamlem}.
Applying the symmetry map $\Upsilon:R[[x,y]] \rightarrow R[[x,y]]$ to interchange $x$ and $y$ then also 
gives $\lambda x \lambda y \equiv \lambda y \lambda x$ mod $(\lambda x)^2$, as claimed.
\end{proof}
In light of this Lemma we will only consider strict triangular structures in the following. 

\begin{remark}\label{rem:ypqviaphi}
    The commutation relation
    $yx = \sum_{p,q} \Upsilon_{p,q}x^{1+p}y^{1+q}$
    can be rewritten as
    \[yx = \sum_q \phi_q(x) y^{1+q},\qquad \phi_q(x) = \sum_p \Upsilon_{p,q}x^{1+p}.\]
    For a strict $\Upsilon$ this casts the braiding as a $\BFK$-module algebra structure 
    on $T=R[[x]]$ that extends the given one on $R$.
\end{remark}

It is easy to see that there is a universal strict triangular $\BFK$-Hopf module algebra:
for this let $\widehat \Tria$ be the free $\BFK$-Hopf module algebra on symbols $\Upsilon_{p,q}$,
$p,q\ge 1$, let $T=\widehat{\Tria}[[x]]$ and define 
$\Upsilon : T\otimes T \rightarrow T\otimes T$ via 
\[ \Upsilon \left(x\otimes x\right)  = x\otimes x + \sum_{p,q\ge 1} \Upsilon_{p,q}x^{1+p}\otimes x^{1+q}. \]
Then let $\Tria = \widehat{\Tria} / (\mathrm{relations})$ where the relations 
are those required by Definition~\ref{def:triangular}.

It is largely a matter of taste whether $\Tria$ should be considered as 
an algebra with or without a unit here. For definiteness, we will assume a unit $1\in\Tria$
and a corresponding augmentation $\epsilon:\Tria \rightarrow \ZZ$.
Its kernel $\bar\Tria$ defines a decreasing convergent filtration through its powers $\bar\Tria^n$.

By construction the strict triangular structure $(\Tria,\Upsilon)$ is clearly universal.
It is less obvious to determine the size and the structure of $\Tria$ more explicitly.
We shall show 
\begin{thm}\label{thm:triangular}
    Let $(\Tria,\Upsilon)$ be the universal strict triangular $\BFK$-Hopf module algebra.
    With $p,q\ge 1$ there are families of relations
    \begin{enumerate}
        \item \label{eq:thmtr1}
            $\Upsilon_{p,q} \equiv -\Upsilon_{q,p}, \, \Upsilon_{p,p} \equiv 0$  modulo $\bar\Tria^2$,
        \item \label{eq:thmtr2}
            $\phi_k \Upsilon_{p,q} + \phi_p \Upsilon_{q,k} + \phi_q \Upsilon_{k,p} \equiv 0$ modulo $\bar\Tria^2$,
        \item \label{eq:thmtr3}
            $\phi_p\phi_q(a) \equiv \phi_q\phi_p(a)$ modulo $\bar\Tria^{n+1}$ for all $a\in\bar\Tria^n$.
    \end{enumerate}
    As an algebra $\Tria$ is the free associative algebra on the family 
    \[ \left\{ \phi_{i_1}\cdots \phi_{i_n} \Upsilon_{p,q} \,:\, i_1\ge i_2\ge \cdots \ge i_n \ge p < q \right\}.\]
\end{thm}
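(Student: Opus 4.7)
The plan is to linearize each defining axiom of $\Tria$---triangularity $\Upsilon^2=\id$, Yang-Baxter, and the $\BFK$-module algebra compatibility---with respect to the augmentation filtration $\bar\Tria\supset\bar\Tria^2\supset\cdots$, read off (\ref{eq:thmtr1})--(\ref{eq:thmtr3}) as the first-order content of those axioms, and then promote the resulting picture in $\gr\Tria$ to a PBW-style freeness statement.

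For (\ref{eq:thmtr1}), I would expand $\Upsilon^2(x\otimes x)=x\otimes x$ to first order in the generators. Modulo $\bar\Tria$ the braiding acts as the trivial swap (an easy induction using the commutation of $\Upsilon$ with $m$), so $\Upsilon(x^{1+p}\otimes x^{1+q})\equiv x^{1+q}\otimes x^{1+p}$ modulo $\bar\Tria$ and hence
\[\Upsilon^2(x\otimes x)\equiv x\otimes x + \sum_{p,q\ge 1}\bigl(\Upsilon_{p,q}+\Upsilon_{q,p}\bigr)\,x^{1+p}\otimes x^{1+q}\pmod{\bar\Tria^2}.\]
The axiom then forces $\Upsilon_{p,q}+\Upsilon_{q,p}\equiv 0\pmod{\bar\Tria^2}$; the vanishing $\Upsilon_{p,p}\equiv 0$ falls out by combining this antisymmetry with the \emph{exact} relation $\sum_{i+j=k,\,i,j\ge 1}\Upsilon_{i,j}=0$ from Remark~\ref{commexpl}, specialized to $k=2p$. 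Relation (\ref{eq:thmtr2}) is the analogous first-order (Jacobi-type) content of Yang-Baxter (\ref{yangbaxter}) evaluated on $x\otimes x\otimes x$. The new feature is that pulling a scalar $\Upsilon_{p,q}$ past an $x$ in the intermediate tensor factor introduces $\phi_k(\Upsilon_{p,q})$ via $xr=\sum\phi_k(r)x^{1+k}$; matching the coefficients of $x^{1+k}\otimes x^{1+p}\otimes x^{1+q}$ on the two sides of Yang-Baxter then yields the cyclic sum modulo $\bar\Tria^2$. Relation (\ref{eq:thmtr3}) I would derive by induction on $n$: using $\phi(ab)=\sum\phi'(a)\phi''(b)$ together with the coproduct of Lemma~\ref{lem:first}, the commutator $[\phi_p,\phi_q]$ acting on $a\cdot b$ splits into terms that either vanish modulo $\bar\Tria^{n+1}$ (because a tensor factor of the coproduct lands in $\bar\Tria$) or reduce the problem to the commutator acting on a factor of strictly lower filtration. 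The base case $n=1$ is (\ref{eq:thmtr2}) applied on the generators $\Upsilon_{i,j}$.

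The main obstacle is the freeness claim, which I would handle by passing to the associated graded $\gr\Tria$, where (\ref{eq:thmtr1})--(\ref{eq:thmtr3}) become exact identities: the $\Upsilon_{p,q}$ are strictly antisymmetric and the $\phi$'s act as commuting operators on each graded piece. The proposed family $\{\phi_{i_1}\cdots\phi_{i_n}\Upsilon_{p,q}:i_1\ge\cdots\ge i_n,\,p<q\}$ then spans $\gr\Tria$ by direct normalization (antisymmetry imposes $p<q$, graded commutativity imposes weakly decreasing $\phi$-indices), and the spanning lifts to $\Tria$ itself by filtered induction, each reduction being traded against a higher-filtration error that is normalized recursively. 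For algebraic independence a concrete model is required; I would produce one either by directly equipping the free associative algebra on the proposed basis with a compatible strict triangular $\BFK$-module algebra structure and invoking universality to obtain an inverse map, or by routing through the Hurewicz image in $M\xi_\ast$ foreshadowed in Lemma~\ref{lem:hurlem}, where the independence of the basis can be checked against the known structure of $M\xi_\ast$. The delicate point is the bookkeeping to confirm that no hidden higher-order relation survives the lift from $\gr\Tria$ back to $\Tria$.
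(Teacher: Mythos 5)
Your treatment of relations (\ref{eq:thmtr1}) and (\ref{eq:thmtr2}) is sound and is in substance the paper's own argument (the paper expands the Jacobi identity for commutators in $R[[x,y,z]]$ rather than the Yang--Baxter identity on $x\otimes x\otimes x$, but both are the same first-order linearization, and your coefficient match does reproduce the cyclic relation once antisymmetry is used). The genuine gap begins with (\ref{eq:thmtr3}). Your coproduct induction only reduces the statement on a product $ab$ to the statement on the factors, so it must bottom out on the \emph{additive} generators of $\bar\Tria$, namely the iterated elements $\phi_{i_1}\cdots\phi_{i_m}\Upsilon_{p,q}$, which are not products of two elements of $\bar\Tria$. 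Your proposed base case, \enquote{(\ref{eq:thmtr2}) applied on the generators}, does not supply this: relation (\ref{eq:thmtr2}) is a cyclic identity among single applications $\phi_k\Upsilon_{p,q}$ and says nothing about commuting two $\phi$'s, let alone about commuting them on $\phi_{i_1}\cdots\phi_{i_m}\Upsilon_{p,q}$. The paper needs a separate computation here: comparing $x_1\cdots x_n\cdot[z,y]$ with $x_{\sigma(1)}\cdots x_{\sigma(n)}\cdot[z,y]$ to show that $\phi_{k_1}\cdots\phi_{k_n}\Upsilon_{p,q}$ is symmetric in the $k_i$ modulo $\bar\Tria^2$, and only then the product/coproduct induction. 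Some argument of this kind is missing from your plan.

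The freeness claim is where the proposal stops short of a proof. First, a bookkeeping slip that matters: you state the spanning family with only the constraints $i_1\ge\cdots\ge i_n$ and $p<q$, dropping $i_n\ge p$; without the Jacobi rewriting step of Remark~\ref{rem:triauniv} (which trades $\phi_{i_n}\Upsilon_{p,q}$ with $i_n<p$ for $\phi_p\Upsilon_{i_n,q}-\phi_q\Upsilon_{i_n,p}$) you span with the wrong, larger family, on which freeness is false precisely because of (\ref{eq:thmtr2}). Second, and more importantly, the injectivity is exactly the hard point and you only name two possible routes without executing either. Equipping the free algebra on the proposed generators directly with a strict triangular $\BFK$-module algebra structure is essentially as hard as the theorem itself, and the $M\xi$ route requires the concrete input the paper supplies: the Hurewicz embedding $M\xi_\ast\subset H_\ast M\xi=\ZZ\langle Z_1,Z_2,\ldots\rangle$ of Lemma~\ref{lem:hurlem}, the identifications $\Upsilon_{p,q}\equiv[Z_p,Z_q]$ and $\phi_l\equiv[Z_l,-]$ modulo higher filtration (Lemma~\ref{lem:liefilt}), and the classical fact that the derived subalgebra $[L,L]$ of the free Lie algebra is itself free on the iterated brackets indexed by $i_1\ge\cdots\ge i_n\ge p<q$, so that the associated graded of the image is the enveloping algebra $\mathcal{U}([L,L])$ and the surjection from the free algebra is forced to be bijective. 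None of this appears in the proposal, and \enquote{checked against the known structure of $M\xi_\ast$} risks circularity, since the structure of $M\xi_\ast$ (Theorem~\ref{thm:mxigens}) is itself established in the paper using Theorem~\ref{thm:triangular}; what is actually available independently is only the embedding into $H_\ast M\xi$.
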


\begin{proof}[Proof of Thm.~\ref{thm:triangular}, part 1: the relations (\ref{eq:thmtr1}) - (\ref{eq:thmtr3})]
The crucial observation is that from 
$yx = xy + \sum_{p,q}\Upsilon_{p,q}x^{1+p}y^{1+q}$
we get $xy \equiv xy \bmod \bar\Tria$
and hence also $x^ky^l \equiv y^l x^k \bmod {\bar \Tria}$
for all $k$, $l$. 

Using this we find
$yx \equiv xy + \sum_{p,q}\Upsilon_{p,q}y^{1+q}x^{1+p}  \pmod {\bar \Tria^2}$.
Since we also have 
$xy = yx - \sum_{p,q}\Upsilon_{q,p}y^{1+q}x^{1+p}$
a comparison of coefficients yields $\Upsilon_{p,q}\equiv - \Upsilon_{q,p}$.
Setting $x=y$ then gives $x^2 \equiv x^2 + \sum_{p} \Upsilon_{p,p}x^{2+2p}$
which proves $\Upsilon_{p,p}\equiv 0$ as well.

A similar computation established the Jacobi-like relation (\ref{thm:triangular}.\ref{eq:thmtr2}):
From $[y,z] = \sum \Upsilon_{p,q}y^{1+p}z^{1+q}$ we get 
\begin{align*} 
    [x,[y,z]] & = \sum_{k\ge 0, p,q\ge 1} \phi_k \Upsilon_{p,q}x^{1+k} y^{1+p}z^{1+q}
- \sum_{p,q\ge 1} \Upsilon_{p,q}y^{1+p}z^{1+q}x \\
& \equiv \sum_{k,p,q\ge 1}   \phi_k \Upsilon_{p,q}x^{1+k} y^{1+p}z^{1+q} 
\pmod {\bar \Tria^2 [[x,y,z]]}.
\end{align*}
Modulo $\bar\Tria$ we can rearrange the variables on the right-hand side and get 
\begin{align*}
    [x,[y,z]] & \equiv  \sum_{k,p,q\ge 1}   \phi_k \Upsilon_{p,q}y^{1+p} z^{1+q}x^{1+k} 
    \pmod {\bar \Tria^2 [[x,y,z]]}
    \\
    & \equiv   \sum_{k,p,q\ge 1}   \phi_k \Upsilon_{p,q}z^{1+q}x^{1+k}y^{1+p}
    \pmod {\bar \Tria^2 [[x,y,z]]}.
\end{align*}
We now reorder the variables in $[x,[y,z]] + [y,[z,x]] + [z,[x,y]] = 0$.
Extracting the coefficient of $x^{1+k} y^{1+p}z^{1+q}$
then gives the desired $\phi_k \Upsilon_{p,q} + \phi_p \Upsilon_{q,k} + \phi_q \Upsilon_{k,p} \equiv 0$.

For the last family of relations we first establish directly that 
\[ 
    \phi_{k_1}\cdots \phi_{k_n} \Upsilon_{p,q} \equiv 
    \phi_{\sigma(k_1)}\cdots \phi_{\sigma(k_n)} \Upsilon_{p,q} \pmod{\bar\Tria^2} 
\]
for every permutation $\sigma$ and sequence of integers $k_1,\ldots,k_n$.
This is seen by looking at 
\begin{align*}
    x_1\cdots x_n\cdot [z,y] &= \sum_{k1,\ldots,k_n\ge 1}
        \phi_{k_1}\cdots \phi_{k_n}\Upsilon_{p,q}  x_1^{1+k_1}\cdots x_n^{1+k_n}  y^{1+p}z^{1+q} 
    \\ &\equiv  \sum_{k1,\ldots,k_n\ge 1} \phi_{k_1}\cdots \phi_{k_n}\Upsilon_{p,q}
       x_{\sigma(1)}^{1+k_{\sigma(1)}}\cdots x_{\sigma(n)}^{1+k_{\sigma(n)}} y^{1+p}z^{1+q} 
\end{align*}
and comparing it against 
\[ x_{\sigma(1)}\cdots x_{\sigma(n)} \cdot [z,y] 
= \sum_{k1,\ldots,k_n\ge 1} \phi_{k_1}\cdots \phi_{k_n}\Upsilon_{p,q}
x_{\sigma(1)}^{1+k_{1}}\cdots x_{\sigma(n)}^{1+k_{n}} y^{1+p}z^{1+q}.
\]
Finally, to establish relation (\ref{eq:thmtr3}) we use that every $a\in\Tria$ 
decomposes as a sum of products of terms of the form just considered. It thus suffices to show 
$\phi_p\phi_q(ab) \equiv \phi_q\phi_p(ab) \bmod \bar\Tria^{n+2}$, 
assuming that (\ref{eq:thmtr3}) is already satisfied for $a\in\bar\Tria$, $b\in\bar\Tria^n$ separately. 
But this follows easily from the product formula 
$\phi_p\phi_q(ab) = \sum \phi_p'\phi_q'(a) \cdot \phi_p''\phi_q''(b)$.
\end{proof}

\begin{remark}\label{rem:triauniv}
With these relations we can now establish an upper bound on the size of $\Tria$:
by definition it is generated as an algebra by expressions 
$\phi_{i_1}\cdots \phi_{i_n} \Upsilon_{p,q}$.
By (\ref{eq:thmtr1}) we can assume $p<q$ here. Should we have 
$i_n<p$ we can use the Jacobi relation (\ref{eq:thmtr2}) to rewrite 
$\phi_{i_n}\Upsilon_{p,q} \leadsto \phi_p\Upsilon_{i_n,q} - \phi_q \Upsilon_{i_n,p}$
where we always have $a>b<c$ in each $\phi_a\Upsilon_{b,c}$ on the right-hand side.
Reordering the $\phi_{i_k}$ and repeating this trick will get us down 
to products of terms with $i_1\ge i_2\ge \cdots\ge i_n>p<q$.
We conclude that there is a surjective algebra map 
\[ \Ass\left( \phi_{i_1}\cdots \phi_{i_n} \Upsilon_{p,q} \,:\, i_1\ge i_2\ge \cdots \ge i_n \ge p < q \right) \twoheadrightarrow \Tria. \]
We will later (following Lemma \ref{lem:liefilt}) show, by explicit computation, that this map is actually \emph{bijective} for the 
triangular structure of $M\xi$. This will then complete the proof of Theorem \ref{thm:triangular}
while also giving us an explicit computational model for $\Tria$ as a Hopf module algebra. 
\end{remark}

\begin{remark}
Even though the relations (1)-(3) become familiar Lie algebra relations 
when reduced modulo $\bar\Tria^2$
their unreduced form is extraordinarily complicated.
We give some examples:
for the first family of relations the non-trivial low-dimensional unreduced equations are 
(writing $\phi_{k_1,\ldots,k_n}$ for $\phi_{k_1}\cdots\phi_{k_n}$)
\begin{align*}
    \Upsilon_{2,4} + \Upsilon_{4,2} =& -\Upsilon_{3,3} = 6\cdot\Upsilon_{1,2}^2 \\
    \Upsilon_{2,5} + \Upsilon_{5,2} =& -\Upsilon_{3,4} - \Upsilon_{4,3} = 6\cdot\Upsilon_{1,2}\cdot\Upsilon_{1,3} + 6\cdot\Upsilon_{1,2}\cdot\phi_{1}(\Upsilon_{1,2}) + 8\cdot\Upsilon_{1,3}\cdot\Upsilon_{1,2} \\
    \Upsilon_{3,5} + \Upsilon_{5,3} =& -6\cdot\Upsilon_{1,2}\cdot\Upsilon_{1,4} + 6\cdot\Upsilon_{1,2}\cdot\Upsilon_{2,3} + \Upsilon_{1,2}\cdot\phi_{1,1}(\Upsilon_{1,2}) - 9\cdot\Upsilon_{1,2}\cdot\phi_{2}(\Upsilon_{1,2})
    \\& - 8\cdot\Upsilon_{1,3}\cdot\phi_{1}(\Upsilon_{1,2}) - 10\cdot\Upsilon_{1,4}\cdot\Upsilon_{1,2} + 12\cdot\Upsilon_{2,3}\cdot\Upsilon_{1,2} \\
    \Upsilon_{4,4} =& -6\cdot\Upsilon_{1,2}\cdot\Upsilon_{2,3} - 6\cdot\Upsilon_{1,2}\cdot\phi_{1}(\Upsilon_{1,3}) - 3\cdot\Upsilon_{1,2}\cdot\phi_{1,1}(\Upsilon_{1,2}) 
    \\& + 3\cdot\Upsilon_{1,2}\cdot\phi_{2}(\Upsilon_{1,2}) - 8\cdot\Upsilon_{1,3}\cdot\Upsilon_{1,3} - 4\cdot\Upsilon_{1,3}\cdot\phi_{1}(\Upsilon_{1,2}) - 12\cdot\Upsilon_{2,3}\cdot\Upsilon_{1,2} \\
    \Upsilon_{4,5} + \Upsilon_{5,4} =&\hphantom{+}
    69\cdot\Upsilon_{1,2}^2\cdot\Upsilon_{1,2} - 6\cdot\Upsilon_{1,2}\cdot\Upsilon_{2,4} - 6\cdot\Upsilon_{1,2}\cdot\phi_{1}(\Upsilon_{1,4}) 
    \\& - 5\cdot\Upsilon_{1,2}\cdot\phi_{1,1}(\Upsilon_{1,3}) - \Upsilon_{1,2}\cdot\phi_{1,1,1}(\Upsilon_{1,2}) - 9\cdot\Upsilon_{1,2}\cdot\phi_{2}(\Upsilon_{1,3}) 
    \\& - 3\cdot\Upsilon_{1,2}\cdot\phi_{2,1}(\Upsilon_{1,2}) + 9\cdot\Upsilon_{1,2}\cdot\phi_{3}(\Upsilon_{1,2}) - 8\cdot\Upsilon_{1,3}\cdot\Upsilon_{1,4} - 8\cdot\Upsilon_{1,3}\cdot\Upsilon_{2,3} 
    \\& - 16\cdot\Upsilon_{1,3}\cdot\phi_{1}(\Upsilon_{1,3}) - 6\cdot\Upsilon_{1,3}\cdot\phi_{1,1}(\Upsilon_{1,2}) - 10\cdot\Upsilon_{1,4}\cdot\Upsilon_{1,3} 
    \\& - 5\cdot\Upsilon_{1,4}\cdot\phi_{1}(\Upsilon_{1,2}) - 12\cdot\Upsilon_{2,3}\cdot\Upsilon_{1,3} - 18\cdot\Upsilon_{2,3}\cdot\phi_{1}(\Upsilon_{1,2}) - 15\cdot\Upsilon_{2,4}\cdot\Upsilon_{1,2}
    \end{align*}
    For the $\Lambda_{k,p,q} = \phi_k \Upsilon_{p,q} + \phi_p \Upsilon_{q,k} + \phi_q \Upsilon_{k,p}$
    one finds $\Lambda_{1,2,3} = -\Upsilon_{1,2}^2$ and
\begin{align*}
    \Lambda_{1,2,4} =&\hphantom{+} 2\cdot\Upsilon_{1,2}\cdot\Upsilon_{1,3} - 2\cdot\Upsilon_{1,2}\cdot\phi_{1}(\Upsilon_{1,2}) - 4\cdot\Upsilon_{1,3}\cdot\Upsilon_{1,2} + 6\cdot\phi_{1}(\Upsilon_{1,2})\cdot\Upsilon_{1,2} \\
    \Lambda_{1,3,4} =&\hphantom{+} 3\cdot\Upsilon_{1,2}\cdot\Upsilon_{1,4} - 3\cdot\Upsilon_{1,2}\cdot\Upsilon_{2,3} - 3\cdot\Upsilon_{1,2}\cdot\phi_{1}(\Upsilon_{1,3}) - 6\cdot\Upsilon_{1,2}\cdot\phi_{1,1}(\Upsilon_{1,2}) 
    \\& - 6\cdot\Upsilon_{1,2}\cdot\phi_{2}(\Upsilon_{1,2}) - 2\cdot\Upsilon_{1,3}\cdot\Upsilon_{1,3} - 7\cdot\Upsilon_{1,3}\cdot\phi_{1}(\Upsilon_{1,2}) - 2\cdot\Upsilon_{1,4}\cdot\Upsilon_{1,2} 
    \\& + 3\cdot\Upsilon_{2,3}\cdot\Upsilon_{1,2} - 3\cdot\phi_{1}(\Upsilon_{1,2})\cdot\phi_{1}(\Upsilon_{1,2}) - 8\cdot\phi_{1}(\Upsilon_{1,3})\cdot\Upsilon_{1,2} + 9\cdot\phi_{2}(\Upsilon_{1,2})\cdot\Upsilon_{1,2}
    \end{align*}
    Finally, as an illustration of the last family of relations we just mention 
    \[ \left(\phi_1 \phi_3 - \phi_3 \phi_1\right)  \Upsilon_{1,4} = 
    3\cdot \Upsilon_{1,2}\cdot \phi_{1}(\Upsilon_{1,4}) + \Upsilon_{1,3}\cdot \Upsilon_{1,4}
     - \Upsilon_{1,4}\cdot\Upsilon_{1,3} - 2\cdot\phi_{1}(\Upsilon_{1,4})\cdot \Upsilon_{1,2} . \]
\end{remark}

\end{subsection}

\end{section}


\begin{section}{Symmetric and quasi-symmetric functions}
We have seen that for a ring $R$ the datum of a $\BFK$-Hopf module algebra structure on $R$ is 
exactly what is needed to make sense of the power series ring $R[[x]]$ where $x$ is allowed to be non-central.
An additional triangular structure gives meaning to $R[[x_1,\ldots,x_n]]$ for any number $n$ of variables.
We next investigate the canonically defined subrings $\Sym_n(R) \subset \QSym_n(R) \subset R[[x_1,\ldots,x_n]]$
of symmetric or quasi-symmetric functions.
\begin{subsection}{Symmetric functions}\label{sec:symm}
Since $\Upsilon^2=\id$ we have an action of the
symmetric group $\Sigma_n$ on the tensor powers of $T$.
The invariants define the rings of symmetric functions
$\Sym_n(R) \subset R[[x_1,\ldots,x_n]]$.
Note again that the $x_j$
are neither assumed to commute with the coefficients from $R$ nor with each other.

Our goal is to show that the $\Sym_n(R)$ are generated by
symmetric functions $c_1,\ldots,c_n \in T\otimes_R\cdots\otimes_RT = R[[x_1,\ldots,x_n]]$
and that $\Sym_n(R)$ is the free $R$-module with basis given by the monomials $c_1^{e_1}\cdots c_n^{e_n}$.
In short, and in deceptively familiar notation, we want to show that
\[\Sym_n(R) = R[[c_1,\ldots,c_n]]\]
with the understanding that
\begin{enumerate}
    \item The $c_k$ do not necessarily commute with the coefficients from $R$.
    \item They do not necessarily commute with each other.
    \item They are \emph{not} defined canonically: choices are involved in their  construction.
\end{enumerate}
We furthermore want to show that the $c_k$ can be chosen compatibly for different $n$, so that
$c_k\in \Sym_n(R)$ can be obtained by restriction from some $c_k\in\Sym_\infty(R)$ with
\[\Sym_\infty(R) = R[[c_1,c_2,\ldots]].\]

We start with an ad-hoc computation of $\Sym_2(R) \subset R[[x,y]]$
before dealing with the general case systematically below.
\begin{lemma}
One has $yx = xy + \sum_{p<q} \theta_{p,q} \left(x^py^q - y^px^q\right)$
for certain uniquely determined $\theta_{p,q}\in R$.
\end{lemma}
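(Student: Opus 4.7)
The plan is to prove both existence and uniqueness of the $\theta_{p,q}$ by induction on total degree in $F := T \otimes_R T = R[[x,y]]$, with $F^{(n)}$ the decreasing filtration by total degree in $x, y$. The central observation is that $\Upsilon(x^a y^b) = y^a x^b$ \emph{exactly}, not merely up to higher order: under the identification $x^a y^b = x^a \otimes x^b$, the formula $m_2 = (m\otimes m)(\id\otimes\Upsilon\otimes\id)$ yields
\[
y^a x^b = (1 \otimes x^a)(x^b \otimes 1) = (m\otimes m)\bigl(1 \otimes \Upsilon(x^a \otimes x^b) \otimes 1\bigr) = \Upsilon(x^a \otimes x^b).
\]
Hence each $x^p y^q - y^p x^q$, as well as $yx - xy$ itself, is exactly $\Upsilon$-anti-invariant.

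Combined with the elementary fact $y^p x^q \equiv x^q y^p \pmod{F^{(p+q+1)}}$ (iterate the commutation rule), uniqueness follows by comparing the coefficient of $x^{p_0} y^{q_0}$ in the smallest total degree $n_0$ for which some $\theta_{p,q}$ with $p+q = n_0$ is nonzero. For existence I would set $\rho_n := yx - xy - \sum_{p < q,\, p+q \le n} \theta_{p,q}(x^p y^q - y^p x^q)$ and arrange inductively that $\rho_n \in F^{(n+1)}$. The base case $\rho_2 \in F^{(3)}$ comes from $\Upsilon_{0,0} = 1$ (Remark~\ref{commexpl}). In the inductive step, $\rho_{n-1} \in F^{(n)}$ is exactly $\Upsilon$-anti-invariant; since $\Upsilon$ preserves the filtration and descends on $\gr_n F$ (the free left $R$-module on $\{x^a y^b : a+b=n\}$) to the classical swap $\sigma \colon x \leftrightarrow y$, the degree-$n$ part of $\rho_{n-1}$ lies in the antisymmetric subspace of $\gr_n F$, whose natural $R$-basis is $\{x^p y^q - x^q y^p : p < q,\, p+q=n\}$. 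Reading off the coefficients defines $\theta_{p,q}$ for $p+q=n$, and subtracting kills the degree-$n$ part to continue the induction.

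The main obstacle is recognizing the exact identity $\Upsilon(x^a y^b) = y^a x^b$. Without it one only has anti-invariance modulo higher degree, and the error terms leak into the induction in a way that would require delicate extra bookkeeping; with it, the inductive step reduces to the standard decomposition of antisymmetric polynomials in two commuting variables.
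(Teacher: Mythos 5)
Your exact identity $\Upsilon(x^a y^b) = y^a x^b$ is correct (it follows, as you say, from the definition of the multiplication on $T\otimes_RT$ together with the unit conditions (\ref{ycomm2})), and it is a clean way to see what the paper states as $\Upsilon(\beta_{p,q})=\beta_{q,p}$; your uniqueness argument is also fine, since only off-diagonal monomials occur there. The gap is in the existence step, at the claim that the anti-invariant part of $\gr_n F$ has $R$-basis $\{x^p y^q - x^q y^p : p<q,\ p+q=n\}$. This is an assertion about the $(-1)$-eigenspace of a basis-permuting involution on a free module over an \emph{arbitrary} ring $R$, and it fails as soon as $R$ has $2$-torsion: for even $n$ any element $r\, x^{n/2}y^{n/2}$ with $2r=0$ is anti-invariant (indeed invariant), and in characteristic $2$ anti-invariance coincides with invariance, so it gives no control over the diagonal coefficient at all. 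Consequently your induction can get stuck: the lowest-degree part of $\rho_{n-1}$ may contain a diagonal term that cannot be written in terms of the $x^p y^q - y^p x^q$ with $p<q$, and anti-invariance under $\Upsilon$ alone cannot exclude it.

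The missing ingredient is exactly the braided commutativity $m\Upsilon = m$ (cf.\ Remark~\ref{commexpl}), which the paper's proof invokes explicitly (``the commutativity of $m$'') and which you never use. Applying the left $R$-linear specialization $m\colon x^a y^b \mapsto x^{a+b}$ one has $m(yx-xy)=0$ and $m(x^p y^q - y^p x^q)=0$, hence $m(\rho_{n-1})=0$; comparing the degree-$n$ coefficients gives $\sum_{a+b=n} r_{a,b}=0$, and since the off-diagonal coefficients cancel in pairs by your anti-invariance, the diagonal one vanishes — with no division by $2$. This is precisely how the paper argues ($\theta_{1,1}=1$ and $\sum_{p+q=n}\theta_{p,q}=0$ from setting $y=x$, the relation $\theta_{p,q}=-\theta_{q,p}$ from applying $\Upsilon$, and the two interleaved to get $\theta_{p,p}=0$). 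With this one addition your filtration argument goes through and is otherwise a pleasant reorganization of the paper's proof in the standard monomial basis.
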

\begin{proof}
    One clearly has $yx = xy$ + terms of degree $\ge 3$. It follows that the $\beta_{p,q}$
    with
    \[\beta_{p,q} = \begin{cases}x^py^q & p<q \\ y^px^q & p > q \\ x^py^p & p=q \end{cases}\]
    form a left $R$-module basis of $R[[x,y]]$.
    We therefore have
    \begin{equation}\label{eq:thetadef}
        yx = \sum_{p,q\ge 1} \theta_{p,q} \beta_{p,q}
    \end{equation}
    with unique $\theta_{p,q}\in R$.
    We claim that $\theta_{p,q} = -\theta_{q,p}$ for $p\not=q$,
    $\theta_{1,1} = 1$ and $\theta_{p,p} = 0$ for $p>1$.

    The commutativity of $m\colon{}T\otimes_RT\rightarrow T$ gives
    \[x^2 = \sum_{p,q\ge 1} \theta_{p,q} x^{p+q}.\]
    From this one gets $\theta_{1,1}=1$.
    Applying $\Upsilon$ to (\ref{eq:thetadef})
    and using $\Upsilon(\beta_{p,q}) = \beta_{q,p}$ for $p\not=q$ gives
    \begin{align*}
        xy &= \Upsilon(xy) + \sum_{p\not=q} \theta_{p,q} \beta_{q,p} + \sum_{p\ge 2} \theta_{p,p} \Upsilon(x^py^p)
        \\ &= xy + \sum_{p\not=q} (\theta_{p,q} + \theta_{q,p})\beta_{p,q}
        + \sum_{p\ge 2} \theta_{p,p} \Upsilon(x^py^p)
    \end{align*}
    If one assumes by induction that $\theta_{2,2} = \cdots = \theta_{k,k} = 0$
    one can compare coefficients up to degrees less that $2(k+1)$ and
    obtain $\theta_{p,q}  = -\theta_{q,p}$ in this range.
    Specializing to $y=x$ then gives $\theta_{k+1,k+1}=0$, completing the induction.
\end{proof}
\begin{corollary}
Let $e_2(x,y) = xy + \sum_{p<q} \theta_{p,q} x^py^q$.
Then $e_2\in \Sym_2(R)$.
\end{corollary}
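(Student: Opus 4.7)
The plan is to verify $\Upsilon(e_2) = e_2$, which will place $e_2$ in the invariant subring $\Sym_2(R) = (T\otimes_R T)^{\Upsilon}$. I apply $\Upsilon$ to $e_2 = xy + \sum_{p<q}\theta_{p,q}\,x^p y^q$ term by term using its $R$-bilinearity, feeding in two inputs: the relation $\Upsilon(xy) = yx$, which is nothing but the definition $\Upsilon(x\otimes x) = y\cdot x$ transported to $R[[x,y]]$, and the identity $\Upsilon(x^p y^q) = y^q x^p$ for $p<q$, which is the relation $\Upsilon(\beta_{p,q}) = \beta_{q,p}$ for $p \neq q$ already isolated inside the proof of the preceding lemma. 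This gives
\[ \Upsilon(e_2) \;=\; yx \;+\; \sum_{p<q}\theta_{p,q}\,y^q x^p. \]

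Next I substitute the lemma's explicit expansion of $yx$ in the $\beta$-basis. The off-diagonal contributions inside $yx$ involving the monomials $y^q x^p$ carry the coefficient $-\theta_{p,q}$ (by antisymmetry $\theta_{p,q} = -\theta_{q,p}$) and therefore cancel exactly with the sum on the right. What remains is $xy + \sum_{p<q}\theta_{p,q}\,x^p y^q = e_2$, completing the verification.

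There is no real obstacle here: the argument is a single substitution once the lemma is in hand. The only point deserving care is the bookkeeping of the index convention for the $\beta$-basis, so that the image $\Upsilon(\beta_{p,q}) = \beta_{q,p}$ is correctly unpacked as $\Upsilon(x^p y^q) = y^q x^p$ for $p<q$; from there the cancellation is automatic.
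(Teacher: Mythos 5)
Your proof is correct and follows essentially the same route as the paper: apply $\Upsilon$ termwise using $\Upsilon(xy)=yx$ and $\Upsilon(\beta_{p,q})=\beta_{q,p}$, then substitute the lemma's $\beta$-basis expansion of $yx$ so that the antisymmetry $\theta_{p,q}=-\theta_{q,p}$ makes the off-diagonal terms cancel. The only difference is notational bookkeeping of which monomial $\Upsilon(x^py^q)$ is, and your reading is consistent with the $\beta$-basis convention.
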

\begin{proof}
    One has
    \begin{align*}
        \Upsilon(e_2) &= yx + \sum_{p<q} \theta_{p,q} \Upsilon(x^px^q)
        \\&= xy + \sum_{p<q} \theta_{p,q}\left( x^py^q-y^px^q +y^px^q \right) = e_2.
    \end{align*}
\end{proof}
One can now check that with $e_1=x+y$ one has $\Sym_2(R) = R[[e_1,e_2]]$
as expected.

To define general $c_k$, though, we need to use the theory of quasi-determinants
and noncommutative symmetric functions.
The paper \cite{MR1398918} is a nice introduction to this circle of ideas
(which are also covered in more depth in \cite{MR2132761}*{sect.~6.5}).

Given any collection of (non-commuting) variables $x_1,\ldots,x_n$
one tries to think of these as solutions to a common equation
\[x^n + c_1 x^{n-1} + c_2 x^{n-2} + \cdots + c_n = 0.\]
Assuming the $x_j$ to be generic enough, they will determine this equation,
and hence also the coefficients $c_k$, uniquely.
The coefficients $c_k$ are then functions of the $x_1,\ldots,x_n$,
and by construction obviously independent of the order of the $x_j$.

To compute such $c_k$ explicitly one uses the ansatz
\begin{equation}\label{ckelem}
    c_k(x_1,\ldots,x_n) = (-1)^k \sum_{i_1<i_2<\cdots<i_k} y_{i_k} y_{i_{k-1}}\cdots y_{i_2}y_{i_1}.
\end{equation}
That is one lets $c_k$ be the $k$-th elementary symmetric function
of yet to be determined new variables $y_1,y_2,\ldots,y_n$.
Note that the order of the factors on the right-hand side is significant since the
$y_j$ will also not commute with each other.

The main theorem of \cite{MR2132761}*{sect.~6.5} is that this ansatz
can be validated using quasi-determinants $v_k = v_k(x_1,\ldots,x_k)$
of a certain Vandermonde matrix: the $y_j$ can then be computed as
$y_j = v_j x_j v_j^{-1}$.

We follow this approach to define the required symmetric
$c_1, c_2, \ldots \in \Sym_\infty(R)$
as the coefficients in the doubly-infinite system of equations
\begingroup
\arraycolsep=3.4pt\def\arraystretch{1.2}
\begin{equation}\label{eqtower}
    \begin{array}{cccccccccccc}
        1 &+& c_1x_1^{-1} &+& c_2x_1^{-2} &+& c_3x_1^{-3} &+& \cdots &=& 0\\
        1 &+& c_1x_2^{-1} &+& c_2x_2^{-2} &+& c_3x_2^{-3} &+& \cdots &=& 0\\
        1 &+& c_1x_3^{-1} &+& c_2x_3^{-2} &+& c_3x_3^{-3} &+& \cdots &=& 0\\
        \vdots && \vdots && \vdots && \vdots && \vdots &\vdots& \vdots
    \end{array}
\end{equation}
\endgroup
We need to solve this recursively for the $y_j$ that appear in the ansatz (\ref{ckelem}).
We will inductively assume that $y_1,\ldots,y_{k-1}$ are already known and work modulo
$y_{k+1},y_{k+2},\ldots$ to determine $y_k$ from the $k$-th equation in (\ref{eqtower}).

Note that modulo $y_{k+1},y_{k+2},\ldots$ we have $c_{k+1} \equiv c_{k+2} \equiv \cdots \equiv 0$
so the $k$-th equation in (\ref{eqtower}) reduces to
\begin{equation}\label{kthrow}
    x_k^k + c_1x_k^{k-1} + c_2x_k^{k-2} + \cdots + c_k = 0.
\end{equation}
For $k=1$ this gives $y_1 = x_1$. For $k=2$ we are looking at
\[x_2^2 + c_1x_2 + c_2 = 0 \quad \text{with $c_1=-y_1-y_2$, $c_2=y_2y_1$, $y_1=x_1$}\]
which gives \[(x_2-x_1)x_2 = y_2(x_2-x_1).\]
We deal with the general case in the following Lemma.
\begin{lemma}\label{lem:symm1}
    There are $y_k$ in $R[[x_1,\ldots,x_k]]$
    such that (\ref{ckelem}) defines a family of symmetric functions $c_k$ with
    $\Sym_\infty(R)=R[[c_1,c_2,\ldots]]$.
\end{lemma}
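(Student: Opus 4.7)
My plan is to implement the Gelfand--Retakh recursion suggested by the ansatz (\ref{ckelem}). Setting $y_1 := x_1$ and proceeding by induction on $k$, I would assume that $y_1, \ldots, y_{k-1} \in R[[x_1, \ldots, x_{k-1}]]$ have already been produced and abbreviate $c_j^{(k-1)} := e_j(y_1, \ldots, y_{k-1})$ for the ordered $j$-th elementary symmetric function from the ansatz restricted to those variables. As worked out just above the Lemma, the $k$-th row of (\ref{eqtower}) collapses modulo $(y_{k+1}, y_{k+2}, \ldots)$ to the single linear equation in $y_k$
\[ v_k \cdot x_k \,=\, y_k \cdot v_k, \qquad v_k \,:=\, x_k^{k-1} + c_1^{(k-1)} x_k^{k-2} + \cdots + c_{k-1}^{(k-1)}, \]
and $v_k$ is (up to sign) the Vandermonde quasi-determinant of \cite{MR2132761}*{sect.~6.5}.

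The central technical task is to produce a unique $y_k \in R[[x_1, \ldots, x_k]]$ solving $y_k v_k = v_k x_k$. Uniqueness is immediate because $v_k$ is monic of $x_k$-degree exactly $k-1$ and therefore a non-zero-divisor for right multiplication in the $x_k$-adic filtration. For existence I would write $y_k = x_k + \eta_k$ and reduce the equation to $\eta_k \cdot v_k = [v_k, x_k]$. The commutator on the right vanishes in the commutative shadow of $R[[x_1, \ldots, x_k]]$ and therefore lies in the ideal generated by $\bar{\Tria}$ and by the $\Upsilon_{p,q}$ with $(p,q) \not= (0,0)$. The plan is then to build $\eta_k$ by successive approximation in this filtration: at each level the Yang--Baxter and Jacobi-like identities of Theorem \ref{thm:triangular} force $[v_k, x_k]$ to be right-divisible by $v_k$ modulo the next filtration step, so $\eta_k$ can be peeled off layer by layer with separatedness guaranteeing convergence.

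Once the $y_k$'s exist, $c_k$ is defined by (\ref{ckelem}) and two further items must be checked: $\Upsilon$-invariance and the free basis property. Symmetry follows from the uniqueness in the Gelfand--Retakh construction, since the $c_k$ are intrinsically the coefficients of the ``noncommutative polynomial with pseudo-roots $x_1, \ldots, x_n$'' and an $\Upsilon$-swap of two adjacent $x_i$'s only permutes the pseudo-roots. Compatibility across $n$ follows by the same uniqueness: substituting $x_{n+1} = 0$ forces $y_{n+1} = 0$ while preserving $y_1, \ldots, y_n$, so the $c_k$ stabilize and make sense in $\Sym_\infty(R)$. Finally, $\Sym_\infty(R) = R[[c_1, c_2, \ldots]]$ drops out of a standard leading-term induction on total degree: modulo the commutator corrections the $c_k$ reduce to the classical elementary symmetric polynomials, which freely generate the ordinary ring of symmetric functions, and a filtered lifting bootstraps this to the non-commutative case. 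The hardest step by far is the existence half of the solvability argument --- that $[v_k, x_k]$ really is right-divisible by $v_k$ at every level of the filtration --- which is essentially the non-commutative algebraic content of the Gelfand--Retakh quasi-determinant theorem transferred to our triangular braided setting.
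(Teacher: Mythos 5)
Your overall skeleton matches the paper's: set $y_1=x_1$, reduce the $k$-th row of (\ref{eqtower}) modulo $(y_{k+1},y_{k+2},\ldots)$ to the single equation $v_k x_k = y_k v_k$ with $v_k$ monic of degree $k-1$, solve for $y_k$ by successive approximation, and then deduce $\Sym_\infty(R)=R[[c_1,c_2,\ldots]]$ by passing to the associated graded where the variables become central and the classical theory applies. However, the step you yourself identify as the hardest one --- that the obstruction term is right-divisible by $v_k$ at each stage --- is exactly the step you do not prove. Appealing to ``the Yang--Baxter and Jacobi-like identities of Theorem \ref{thm:triangular}'' forcing divisibility, or to the Gelfand--Retakh theorem ``transferred to our triangular braided setting,'' is an assertion, not an argument: the Gelfand--Retakh results are proved over free skew fields where the $x_i$ satisfy no relations and quasi-determinants can be inverted, and nothing in your write-up transfers that statement to $R[[x_1,\ldots,x_k]]$ with its braided commutation rules. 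Moreover the filtration you propose to approximate in (the ideal generated by $\bar\Tria$, resp.\ by the $\Upsilon_{p,q}$) is the wrong one: for an arbitrary triangular $R$ there is no reason for that filtration to be separated, so your phrase ``separatedness guaranteeing convergence'' assumes precisely what can fail.

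The missing idea, which is how the paper closes this gap, is to filter $R[[x_1,\ldots,x_k]]$ by total degree in the $x$-variables. This filtration is automatically complete and separated, and since all commutators (of variables with each other and with scalars) raise degree by at least one, the associated graded is an honest commutative power series ring $R[[\overline{x_1},\ldots,\overline{x_k}]]$ with central variables. Writing $\delta_d = v_kx_k - y_k^{(d-1)}v_k$ and extracting its degree-$d$ part $\overline{\delta_d}$, one can therefore perform division with remainder by $v_k$ in the commutative shadow; the remainder is a polynomial in $x_k$ of degree $<k$ that vanishes when $x_k$ is specialized to each of the $k$ independent variables $x_1,\ldots,x_k$, hence is identically zero. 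This elementary observation is what makes the correction term $z_d$ exist at every stage, with convergence in the degree filtration for free; no input from Theorem \ref{thm:triangular} beyond the existence of the braided power series ring is needed. If you replace your $\bar\Tria$-adic approximation by this degree-filtration argument (and note that your final ``leading-term induction'' is then literally the same associated-graded reduction), your proof becomes essentially the paper's. Your remaining remarks --- uniqueness from $v_k$ being monic, stability under $x_{n+1}=0$, and symmetry because the defining system is permutation-invariant --- are consistent with the paper, which is itself terse on the symmetry point.
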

We will refer to these $y_k$ as \emph{Vieta coordinates} in the sequel.
\begin{proof}
We follow the inductive approach outlined above.
For the inductive step assume that $y_1,\ldots,y_{k-1}$ have been determined
and that $y_j\equiv x_j$ mod degrees $\ge 2$.
Working modulo $y_{k+1},y_{k+2},\ldots$ we find
\begin{align*}
    c_l &= (-1)^l \sum_{i_1<i_2<\cdots<i_l} y_{i_l} \cdots y_{i_2}y_{i_1}
\\ &= (-1)^l \sum_{i_1<i_2<\cdots<i_l = k} y_k y_{i_{l-1}} \cdots y_{i_2}y_{i_1}
    + (-1)^l \sum_{i_1<i_2<\cdots<i_l < k} y_{i_l} \cdots y_{i_2}y_{i_1}
\\&= -y_k d_{l-1} + d_l
\end{align*}
where $d_l = c_l(x_1,\ldots,x_{k-1})$ is already known.
Furthermore $d_k=0$, so (\ref{kthrow}) can be written as
\begin{align*}
    v_{k}(x_k) x_k &= y_k v_{k}(x_k)
\end{align*}
where $v_{k}(x) = x^{k-1} + d_1 x^{k-2} + \cdots + d_{k-1}$ is a monic polynomial of degree $k-1$.

We now construct $y_k$ as the limit of $y_k^{(d)}$ where $y_k^{(1)} = x_k$
and $y_k^{(d)} = y_k^{(d-1)} + $ correction terms of degree $d$.
We inductively assume given $y_k^{(d-1)}$ with $v_k x_k \equiv y_k^{(d-1)}v_k$ mod degrees $\ge d$,
so the difference $\delta_d = v_k x_k - y_k^{(d-1)}v_k$ consists of terms of degree $d$ or higher.
We extract the terms $\overline{\delta_d}$ of degree exactly $d$ from $\delta_d$ and perform a division with remainder
\[\overline{\delta_d} = z_d \cdot v_k + r_d.\]
We use the fact that commutators in $R[[x_1,\ldots,x_n]]$ raise degrees by at least one, so modulo degree $d+1$ we can treat
the $x_i$ as central variables that commute with both the other $x_j$ and the coefficients from $R$.
We can then argue in the familiar way that $r_d$ is a polynomial in $x_k$ of degree less than $k$ that vanishes
at the $k$-points $x_1,\ldots,x_k$, so must be identically zero.
This proves that modulo terms of degree larger than $d$ the $\overline{\delta_d}$ is cleanly
divisible by $v_k$ and we can let $y_k^{(d+1)} = y_k^{(d)} + z_d$.

This proves that the ansatz (\ref{ckelem}) defines the required $c_k\in\Sym_\infty(R)$.
Now use that the associated graded of the degree filtration of $R[[x_1,\ldots,x_n]]$ is a
power series ring $R[[\overline{x_1},\ldots,\overline{x_n}]]$ with each $\overline{x_j}$ central.
For this ring the classical theory shows that the $\Sigma_n$ invariants are
$R[[\overline{c_1},\overline{c_2},\ldots]]$ where $\overline{c_k}$ is represented by $c_k$.
An induction over the degree filtration hence gives the required $\Sym_\infty(R) = R[[c_1,c_2,\ldots]]$.
\end{proof}

The reader can find some explicit computations of the Vieta coordinates $y_k$ in section \ref{sec:chern}.
These computations take place in the Hurewicz embedding of the universal triangular structure
where a direct non-inductive computation via quasi-determinants is possible.

\end{subsection}
\begin{subsection}{Quasisymmetric functions}
Recall that a function $f(x_1,x_2,\ldots)$
is called
\emph{quasi-symmetric} if it is invariant under insertion of zero-arguments at any place:
\[f(x_1,x_2,\ldots) = f(x_1,\ldots,x_i,0,x_{i+1},x_{i+2},\ldots) \qquad\text{for all $i$.}\]
Examples of such functions are the quasi-symmetric monomials
\[m_I = \sum_{k_1<\cdots<k_n} x_{k_1}^{i_1} \cdots x_{k_n}^{i_n},
\qquad
\text{$I$ a composition $I=(i_1,\ldots i_n)$}.\]
We let $\QSym(R)$ be the free left $R$-module generated by these $m_I$.
This inherits a $\BFK$-module algebra structure by thinking of it formally as a subalgebra of the limit
$R[[x_1,x_2,\ldots]]$.
For example, recall from the proof of Lemma \ref{lem:first} that in $R[[x]]$ we have
\[x^n \cdot r = \sum_{j\ge 0} Q^{(n)}_j(\phi)(r) x^{n+j}.\]
For $\QSym(R)$ this gives the commutation rule
\begin{align*}
    m_I\cdot r &= \sum_{j_1,\ldots,j_n\ge 0} \qquad
    \sum_{k_1<\cdots<k_n} \left( Q^{(i_1)}_{j_1}(\phi) \cdots Q^{(i_n)}_{j_n}(\phi)\right) (r)\,
    x_{k_1}^{i_1+j_1} \cdots x_{k_n}^{i_n+j_n}
    \\ &= \sum_{J\ge 0} Q^{(I)}_J(\phi)(r) m_{I+J}.
\end{align*}
Coming up with a clean formula for a product $m_I\cdot m_J$ seems not so easy, but
the multiplication in $\QSym(R)$ is nonetheless straightforward to compute.
From $yx = xy + \sum_{p,q} \Upsilon_{p,q} x^{1+p}y^{1+q}$,
for example, we get
\begin{align*}
    m_1\cdot m_1 &= \left(\sum_i x_i \right) \cdot \left(\sum_j x_j \right)
    \\ &= \sum_i x_i^2 + \sum_{i<j} \left(x_ix_j + x_jx_i\right)
    \\ &= \sum_i x_i^2 + 2 \sum_{i<j} x_ix_j + \sum_{p,q} \Upsilon_{p,q} \sum_{i<j}x_i^{1+p}x_j^{1+q}
    \\ &= m_2 + 2 m_{1,1} + \sum_{p,q} \Upsilon_{p,q} m_{1+p,1+q}.
\end{align*}

\begin{lemma}
    Symmetric functions are quasi-symmetric: there is a canonical inclusion $\Sym_\infty(R)\subset\QSym(R)$.
\end{lemma}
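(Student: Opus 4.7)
The plan is to show that every symmetric $f\in\Sym_\infty(R)$ expands as a formal $R$-linear combination of the quasi-symmetric monomials $m_I$. The first step is to establish a normal form for the ambient completion $R[[x_1,x_2,\ldots]]$: every element should admit a unique expansion
\[f = \sum_n\;\sum_{\substack{k_1<\cdots<k_n\\ e_1,\ldots,e_n\ge 1}} a_{K,\vec e}\,x_{k_1}^{e_1}\cdots x_{k_n}^{e_n},\qquad a_{K,\vec e}\in R.\]
Existence follows by iteratively applying the commutation $x_ir=\sum_k\phi_k(r)x_i^{1+k}$ (to move scalars to the left) and the strict braiding $x_lx_k=\sum_{p,q}\Upsilon_{p,q}\,x_k^{1+p}x_l^{1+q}$ for $l>k$ (to reorder factors); both rewrites strictly raise the total degree, so the process converges formally. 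Uniqueness can be read off from the associated graded ring for the degree filtration, which degenerates to the classical commutative power series ring where the monomials in question form a genuine basis.

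Next, I would observe that given two strictly increasing $n$-tuples $K=(k_1<\cdots<k_n)$ and $K'=(k_1'<\cdots<k_n')$ of positive integers, any finitely-supported permutation $\sigma\in\Sigma_\infty$ with $\sigma(k_j)=k_j'$ acts on the corresponding normal-form monomial by mere index relabeling,
\[\sigma\cdot\bigl(x_{k_1}^{e_1}\cdots x_{k_n}^{e_n}\bigr)=x_{k_1'}^{e_1}\cdots x_{k_n'}^{e_n}.\]
Crucially, no instance of $\Upsilon$ is invoked, because $\sigma$ is automatically order-preserving on the index set $\{k_1,\ldots,k_n\}$ and the left-to-right arrangement of factors is preserved. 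For a symmetric $f$, the invariance $\sigma(f)=f$ then forces $a_{K,\vec e}=a_{K',\vec e}$, so the coefficient depends only on the composition $I=\vec e$. Denoting the common value by $a_I$ yields $f=\sum_I a_I\,m_I$, exhibiting $f$ as an element of $\QSym(R)$.

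The main technical obstacle is the uniqueness part of the normal form: one needs to know that the two rewriting rules are confluent, so that the resulting expansion is well defined independently of the order in which they are applied. This is precisely where the Yang--Baxter equation for $\Upsilon$ and the $\BFK$-compatibility conditions of Definition \ref{def:triangular} earn their keep; a standard diamond-lemma argument graded by total degree reduces the confluence question to each graded piece, where the commutators vanish and the classical commutative statement applies. Once the normal form is secured, the passage from $\Sigma_\infty$-invariance to quasi-symmetric invariance is essentially formal.
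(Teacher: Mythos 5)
Your route is genuinely different from the paper's: the paper never compares coefficients of individual monomials, but instead propagates the finite-variable generator $c_n$ of Lemma \ref{lem:symm1} to $c'_n(x_1,x_2,\ldots)=\sum_{i_1<\cdots<i_n}c_n(x_{i_1},\ldots,x_{i_n})$, notes that these are manifestly quasi-symmetric and still give a multiplicative basis $\Sym_\infty(R)=R[[c'_1,c'_2,\ldots]]$, and concludes the inclusion from that. Your direct coefficient argument is viable, but as written it has a gap at exactly the point the paper flags as \enquote{not fully obvious}, and it is not the point you identify as the main obstacle.

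The gap is the sentence \enquote{the invariance $\sigma(f)=f$ then forces $a_{K,\vec e}=a_{K',\vec e}$}. You have computed $\sigma$ only on the single monomial $x_{k_1}^{e_1}\cdots x_{k_n}^{e_n}$. To read off the coefficient of the target $x_{k'_1}^{e_1}\cdots x_{k'_n}^{e_n}$ in $\sigma(f)$ you must also control $\sigma$ applied to \emph{every other} normal-form monomial of $f$: whenever $\sigma$ is not order-preserving on the support of such a monomial, its image is a sum carrying $\Upsilon_{p,q}$- and $\phi_k$-correction terms, and a priori these could land on the target and contaminate the comparison, so the equality of coefficients does not yet follow. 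The repair is short but must be said: the rewrites $x_ir=\sum_k\phi_k(r)x_i^{1+k}$ and $x_lx_k=\sum_{p,q}\Upsilon_{p,q}x_k^{1+p}x_l^{1+q}$ never create or delete a variable, so every term in the normal form of $\sigma\bigl(x_{l_1}^{f_1}\cdots x_{l_m}^{f_m}\bigr)$ involves precisely the variables indexed by $\sigma(\{l_1,\ldots,l_m\})$, each with exponent at least $1$. Hence only monomials supported on $K=\sigma^{-1}(K')$ can contribute to the target; on $K$ the permutation \emph{is} order-preserving, so by your relabeling remark (which itself rests on the unit compatibility (\ref{ycomm2}) and the well-definedness of the $\Sigma$-action coming from $\Upsilon^2=\id$ and (\ref{yangbaxter})) those monomials map to single monomials without corrections, and only $(K,\vec e)$ hits the target. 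With that inserted your argument closes. By contrast, the confluence issue you single out as the main technical obstacle is not really one: each $T\otimes_R\cdots\otimes_RT$ is by construction a free left $R$-module on the ordered monomials (this is how the paper itself argues, e.g.\ with the basis $\beta_{p,q}$ in $\Sym_2$), so no diamond-lemma analysis is needed; also note that the leading terms of your rewrites preserve rather than raise the degree, only the correction terms raise it.
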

The proof might not be fully obvious since the $c_k\in\Sym_\infty(R)$ that we defined in (\ref{ckelem})
do not immediately appear to be quasi-symmetric.
\begin{proof}
    Let $c_n=y_ny_{n-1}\cdots y_1\in\Sym_n(R)$ be the generator from Lemma \ref{lem:symm1}.
    Propagate this to
    $c'_n\in\Sym_\infty(R)$ via
    \[c'_n(x_1,x_2,\ldots) = \sum_{i_1<\cdots<i_n} c_n(x_{i_1},\ldots,x_{i_n}).\]
    Modulo terms of degree larger than $n$ the $c'_n$ agrees with the usual $n$-th order
    elementary symmetric function, so the $c'_n$ give an alternative multiplicative basis
    \[\Sym_\infty(R) = R[[c'_1,c'_2,\ldots]].\]
    Since the $c'_n$ are manifestly quasi-symmetric one has $\Sym_\infty(R)\subset \QSym(R)$.
\end{proof}
\end{subsection}
\end{section}

\begin{section}{Complex oriented ring spectra}
Let $E^\ast(-)$ be a multiplicative cohomology theory
represented by an associative (up to homotopy) ring spectrum $E$.
A \emph{complex orientation} of $E$ is the choice of a class $x\in E^2 \CP^\infty$
such that $E^\ast \CP^\infty = E^\ast[[x]]$ (as a left $E^\ast$-module)
and such that the composite
$S^0\cong \Sigma^{-2}\CP^1 \subset \Sigma^{-2}\CP^\infty \xrightarrow{x} E$
is the unit of $E$.
$E$ is called \emph{complex oriented} if such an orientation has been chosen.

The notion of a complex oriented cohomology theory has, of course,
been part of the standard repertoire of algebraic topology since the 1960s
and there are many textbook accounts for this material.
Unfortunately, most accounts explicitly or implicitly assume
that the multiplication on $E$ is homotopy-commutative.
We will not duplicate the derivation of the standard properties of such $E$
here but merely indicate the results that
remain true in the non-commutative case.

\begin{thm}
    Let $E$ be a complex oriented ring spectrum.
    Then $E^\ast$ naturally carries a one-dimensional commutative formal group law
    that corresponds to the tensor product of line bundles.
\end{thm}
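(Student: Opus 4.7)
The plan is to extract every piece of the braided Hopf algebra data from the cohomology of the finite powers $(\CP^\infty)^n$ together with the natural maps between them. Setting $R = E^\ast$, the core input is the Atiyah--Hirzebruch spectral sequence for $E^\ast((\CP^\infty)^n)$: since $(\CP^\infty)^n$ has only even-dimensional cells and the orientation $x\in E^2\CP^\infty$ detects the generator in filtration~$2$, the spectral sequence collapses and gives $E^\ast((\CP^\infty)^n) = E^\ast[[x_1,\ldots,x_n]]$ as a free left $E^\ast$-module, with $x_i$ the pullback of $x$ along the $i$-th projection. This step requires no commutativity; only the freeness of the module structure is used.

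Next I would read off the algebraic structure level by level. The ring structure on $E^\ast(\CP^\infty) = E^\ast[[x]] = T$ extending that of $E^\ast$ is, by the discussion following Lemma~\ref{lem:first}, exactly the datum of a $\BFK$-module algebra structure on $E^\ast$; the operations $\phi_k$ are defined by $x\cdot r = \sum \phi_k(r) x^{1+k}$. The ring structure on $E^\ast((\CP^\infty)^2) = E^\ast[[x,y]] = T\otimes_R T$ then forces a braiding $\Upsilon$ via $y\cdot x = \sum \Upsilon_{p,q}x^{1+p}y^{1+q}$. Associativity of the ring structure on $E^\ast((\CP^\infty)^3)$ yields the Yang--Baxter equation~\eqref{yangbaxter}; functoriality under the diagonal $\CP^\infty \to (\CP^\infty)^2$ yields $m\Upsilon = m$; functoriality under the two basepoint inclusions gives~\eqref{ycomm2}; and comparing the two ways of computing products on $E^\ast((\CP^\infty)^3)$ gives~\eqref{ycomm1}.

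Then the H-space structure classifying the tensor product of line bundles, $\mu\colon \CP^\infty\times\CP^\infty \to \CP^\infty$, induces the coproduct $\Delta := \mu^\ast\colon T\to T\otimes_R T$; the basepoint inclusion $\ast\to\CP^\infty$ induces the counit $\epsilon$, and the classifying map of the dual line bundle induces the antipode $\chi$. Coassociativity of $\Delta$ follows from homotopy-associativity of $\mu$, cocommutativity $\Upsilon\Delta = \Delta$ from $\mu\circ\tau\simeq\mu$ (commutativity of tensor product of line bundles up to natural isomorphism), and the antipode axioms from $\mathcal L\otimes \mathcal L^{-1}$ being trivial.

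I expect the main obstacle to be the braided bialgebra compatibility (item~(6) in the list following Definition~\ref{def:fg}): $\Delta$ must be an algebra homomorphism for the $\Upsilon$-twisted multiplication on $T\otimes_R T$. The strategy is to realize this as functoriality of $E^\ast$ applied to $(\CP^\infty)^4$, regrouped as $(\CP^\infty\times\CP^\infty)\times(\CP^\infty\times\CP^\infty)$ and fed into $\mu\times\mu$. The ring structure on $E^\ast((\CP^\infty)^4)$, obtained inductively from the $n=2,3$ cases, is then the $\Upsilon$-twisted product on $T^{\otimes_R 4}$, and the braiding enters precisely when one shuffles the variable on the second factor past the variable on the third in order to separate the two arguments of $\mu\times\mu$. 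This bookkeeping with four non-commuting formal variables is the technical heart of the argument; once it is in place, all remaining axioms of a commutative braided Hopf algebra reduce to diagram chases among powers of $\CP^\infty$ that mirror the classical commutative-oriented-cohomology story verbatim.
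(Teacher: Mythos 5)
Your proposal is correct and takes essentially the same route as the paper: the $\phi_k$ are read off from the commutation rule in $E^\ast\CP^\infty=E^\ast[[x]]$, while $\Upsilon$, $\Delta$, $\epsilon$ and $\chi$ are induced by the flip, the tensor-product map, the basepoint and the inversion on $\CP^\infty$, with the braided Hopf axioms following by naturality from relations among powers of $\CP^\infty$, exactly as in the paper's (terser) proof. The one detail the paper makes explicit that you pass over is the normalization $\phi_0=\id$ (equivalently $x\cdot r\equiv r\cdot x \bmod x^2$), which is needed for your commutation rule to define a genuine $\BFK$-module algebra structure; it follows from the unit condition in the definition of the orientation via the $E^\ast$-bilinear suspension isomorphism $\tilde E^2(\CP^1)\cong\tilde E^0(S^0)$.
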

\begin{proof}
To prove this we first have to exhibit a natural action of the Brouder--Frabetti--Krattenthaler
Hopf algebra $\BFK$ on $E^\ast$, then explain how to get the required structure maps
$\Delta$, $\Upsilon$, and $\chi$.
This is completely straightforward:

By assumption we have $E^\ast \CP^\infty = E^\ast[[x]]$.
This comes equipped with an $E^\ast$-algebra structure, so we have commutation rules
\[x\cdot r = \sum_{k\ge 0} \phi_k(r) x^{1+k}.\]
Here $\phi_0(r) = r$ because the $E^\ast$-bilinear suspension isomorphism 
$\tilde E^2(\CP^1) \cong \tilde E^0(S^0)$ maps $\bar x\in \tilde E^2(\CP^1)$ to $1$, so $\bar x\cdot r \leftrightarrow 1\cdot r =r \cdot 1 \leftrightarrow r \cdot \bar x$.
These $\phi_k$ define the $\BFK$-action on $E^\ast$
and we have $T = E^\ast \# S = E^\ast \CP^\infty$.

We next look at the external multiplication map
\[E^\ast \CP^\infty \otimes_{E^\ast} E^\ast\CP^\infty \rightarrow E^\ast\left(\CP^\infty\times\CP^\infty\right)\]
which is known to be an isomorphism. The flip
$(x,y) \mapsto (y,x)$ on $\CP^\infty\times\CP^\infty$
defines a bilinear $\Upsilon \colon{}  T\otimes_{E^\ast} T \rightarrow T\otimes_{E^\ast} T$,
the tensor product map $\otimes \colon{} \CP^\infty \times \CP^\infty \rightarrow \CP^\infty$
induces the required $\Delta \colon{} T \rightarrow T\otimes_{E^\ast} T$
and the inversion $\mathop{inv} \colon{} \CP^\infty \rightarrow \CP^\infty$ gives
the required $\chi \colon{} T \rightarrow T$.
\end{proof}

This result is classical for homotopy commutative ring spectra:
it is the foundational observation (due to Quillen \cite{MR253350})
for what has come to be known as
chromatic homotopy theory. In the commutative case it is well known that there
is one complex oriented ring spectrum, $MU$, that carries the
universal (for commutative rings)
one-dimensional commutative formal group law.
Furthermore, ring spectrum maps $MU\rightarrow E$ correspond one-to-one
to complex orientations on $E$
and the full moduli theory of classical formal group laws over commutative rings
has a topological stand-in in the form of the
algebra of operations and cooperations in $MU$.
We here wish to obtain a similar result for complex oriented ring spectra that are
not assumed to be homotopy commutative. The role of $MU$ will be played
by the Baker-Richter spectrum $M\xi$ from \cite{MR2484353}, \cite{MR3243390}.

\begin{lemma}
The formal group law associated to a complex oriented ring spectrum $E$ is strict.    
\end{lemma}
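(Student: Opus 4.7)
The plan is to deduce strictness of the formal group law directly from its topological description $F(x,y)=m^*(x)$, where $m\colon\CP^\infty\times\CP^\infty\to\CP^\infty$ is the H-space multiplication classifying the tensor product of complex line bundles. Strictness of the FGL amounts to the two identities $F(x,0)=x$ and $F(0,y)=y$, which force the expansion $F(x,y)=x+y+\sum_{i,j\ge 0}a_{ij}x^{1+i}y^{1+j}$ in which every correction term involves strictly positive powers of both variables.

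I would establish these identities by invoking the H-space unit axioms for $\CP^\infty$. The basepoint inclusion $e\colon\{*\}\hookrightarrow\CP^\infty$ classifies the trivial complex line bundle, whose Euler class vanishes, so $e^*(x)=0$. The unit axioms supply homotopies $m\circ(\id\times e)\simeq\id\simeq m\circ(e\times\id)$. Pulling back $x\in E^2(\CP^\infty)$ along the first of these composites yields
\[
x \;=\; (\id\times e)^*m^*(x) \;=\; (\id\times e)^*F(x,y) \;=\; F(x,0),
\]
because $(\id\times e)^*$ sends the first-factor generator $x$ to $x$ and the second-factor generator $y$ to $e^*(x)=0$; the identity $F(0,y)=y$ follows symmetrically from the second homotopy.

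The one delicate step is justifying the substitution $(\id\times e)^*F(x,y)=F(x,0)$: it relies on the naturality of the external-product isomorphism $E^*(\CP^\infty\times\CP^\infty)\cong T\otimes_R T$ that was used to write $F(x,y)$ as a power series in the first place, combined with the observation that $\id\times e$ factors through a subspace on which the second coordinate is null-homotopic. Under this identification the pullback $(\id\times e)^*$ simply acts as $y\mapsto 0$ on the power-series side, and the required identities drop out without further computation.
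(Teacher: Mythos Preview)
Your argument is correct as a proof that $F(x,0)=x$ and $F(0,y)=y$, but this is \emph{not} what the Lemma asserts. In this paper ``strict'' is a property of the underlying \emph{triangular structure} $\Upsilon$, not of the coproduct $\Delta$: by the definition preceding the Lemma, strictness means $\Upsilon_{k,0}=\Upsilon_{0,k}=0$ for $k\ge 1$, equivalently $yx\equiv xy\bmod y^2$ in $E^\ast(\CP^\infty\times\CP^\infty)$. The identities $F(x,0)=x$ and $F(0,y)=y$ are already part of the counit axiom for the braided Hopf algebra (see the Remark following Definition~\ref{def:fg}, where $(\epsilon\otimes\id)\Delta=\id=(\id\otimes\epsilon)\Delta$ forces $x+_Fy=x+y+\sum_{i,j\ge 0}a_{i,j}x^{1+i}y^{1+j}$). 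So you have reproved something that is taken as given in the definition, and the actual claim remains untouched.

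What the Lemma requires is a statement about how the two orientation classes $x$ and $y$ commute in $E^\ast(\CP^\infty\times\CP^\infty)$, and this cannot be extracted from the H-space multiplication $m$ alone. The paper's proof proceeds quite differently: one knows from the previous Theorem that for \emph{any} complex oriented $E$ and any scalar $r\in E^\ast$ one has $x\cdot r\equiv r\cdot x\bmod x^2$ (this is the statement $\phi_0=\id$). The trick is then to apply this to the mapping spectrum $F=\Map(\CP^\infty_+,E)$, which is again complex oriented with orientation $y$, and whose coefficient ring is $F^\ast=E^\ast\CP^\infty$. Taking $r=x\in F^\ast$ yields $y\cdot x\equiv x\cdot y\bmod y^2$ inside $F^\ast\CP^\infty=E^\ast(\CP^\infty\times\CP^\infty)$, which is exactly the strictness assertion.
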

\begin{proof}Let $B$ be any space such that $E^\ast B$ is a free $E^\ast$ right module and consider the mapping spectrum $F=\Map(B_+,E)$. This is a ring spectrum
with $\tilde F^\ast X = \tilde E^\ast(B_+\land X) = \tilde E^\ast(B_+)\otimes_{E^\ast}\tilde E^\ast(X)$.
The map 
\[B_+\land\Sigma^{-2}\CP^\infty \xrightarrow{\mathrm{const}_+\land\id} S^0 \land \CP^\infty \xrightarrow{1_E\land x} E\land E\rightarrow E\] defines a class $y\in F^2(\CP^\infty)$
that is easily seen to be a complex orientation. 

Now let $B=\CP^\infty$. By the preceding Theorem we know that $y\cdot r \equiv r\cdot y \mod y^2$ in 
$F^\ast(\CP^\infty)$ for all $r\in F^\ast$. For $r=x\in E^2(\CP^\infty)$ this gives $yx \equiv xy \mod y^2$ in $E^\ast(\CP^\infty\times \CP^\infty)$ as claimed.
\end{proof}

\begin{subsection}{The complex oriented cohomology of $BU$ and $\Omega\Sigma\CP^\infty$}
Let $E$ be a complex oriented ring spectrum.
\begin{lemma}
    One has natural identifications
    \begin{enumerate}
        \item $E^\ast BU(n) = \Sym_n(E_\ast)$, $E^\ast BU = \Sym_\infty(E_\ast)$,
        \item $E^\ast \Omega\Sigma\CP^\infty = \QSym(E^\ast)$.
    \end{enumerate} 
\end{lemma}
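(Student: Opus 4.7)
The plan is to dispatch the two identifications separately, using the Atiyah--Hirzebruch spectral sequence together with the inclusion of the maximal torus (resp.~the James splitting) as the key geometric inputs.

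For $E^\ast BU(n)$, I would first note that $H^\ast(BU(n); \ZZ)$ is torsion-free and concentrated in even degrees, so the AHSS $H^\ast(BU(n); E^\ast) \Rightarrow E^\ast BU(n)$ collapses at $E_2$ and exhibits $E^\ast BU(n)$ as a free left $E^\ast$-module on lifts of the ordinary Chern classes. To identify this module with $\Sym_n(E_\ast)$, I would use the classifying map $(\CP^\infty)^n \to BU(n)$ of the sum of $n$ tautological line bundles, inducing a restriction map $E^\ast BU(n) \to E^\ast[[x_1,\ldots,x_n]]$. Since the Weyl action on the maximal torus permutes the $\CP^\infty$ factors, and the flip on $(\CP^\infty)^{\times 2}$ induces the braiding $\Upsilon$ via K\"unneth, the image lands in $\Sym_n(E_\ast)$. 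A comparison of the associated gradeds with respect to the degree filtration reduces the matching to the classical statement that $H^\ast(BU(n); \ZZ) = \ZZ[c_1,\ldots,c_n]$ maps isomorphically onto the $\Sigma_n$-invariants of $\ZZ[x_1,\ldots,x_n]$; combined with Lemma~\ref{lem:symm1}, which gives $\Sym_n(E_\ast) = E_\ast[[c_1,\ldots,c_n]]$, this upgrades the restriction to an isomorphism. Passing to the limit handles $BU$.

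For $E^\ast \Omega\Sigma\CP^\infty$, I would invoke the James splitting $\Sigma \Omega\Sigma X \simeq \bigvee_{n \geq 1} \Sigma X^{\wedge n}$ for connected based $X$, which gives a natural isomorphism of left $E^\ast$-modules
\[
\tilde E^\ast \Omega\Sigma\CP^\infty \;\cong\; \prod_{n \geq 1} \tilde E^\ast (\CP^\infty)^{\wedge n}.
\]
Each factor is free on monomials $x_1^{i_1}\cdots x_n^{i_n}$ with all $i_k \geq 1$, and under the $n$-th James inclusion $(\CP^\infty)^{\wedge n} \to \Omega\Sigma\CP^\infty$ composed with the colimit, such a monomial pushes to the quasi-symmetric monomial $m_I$ for $I = (i_1,\ldots,i_n)$. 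This identifies the $E^\ast$-module underlying $E^\ast \Omega\Sigma\CP^\infty$ with $\QSym(E^\ast)$. For the ring structure I would use that the H-space multiplication on $\Omega\Sigma\CP^\infty$ restricts on James-filtration pieces to the obvious concatenation $(\CP^\infty)^{\times m} \times (\CP^\infty)^{\times n} \to (\CP^\infty)^{\times (m+n)}$; computing its effect on $m_I \cdot m_J$ reproduces exactly the shuffle-with-overlap calculation at the end of the previous section (e.g.~the sample $m_1\cdot m_1 = m_2 + 2m_{1,1} + \sum \Upsilon_{p,q} m_{1+p,1+q}$).

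The main obstacle sits at two points. First, verifying that the torus restriction surjects onto $\Sym_n(E_\ast)$ rather than a proper subring requires running an induction up the degree filtration and importing the classical integral answer at each stage. Second, the H-space product on $E^\ast\Omega\Sigma\CP^\infty$ must be reconciled with the $\QSym$ multiplication; this hinges on seeing that the braiding $\Upsilon$ arising topologically from flipping $\CP^\infty$ factors is literally the braiding governing the quasi-symmetric product rule, which is automatic once one recalls that both are dictated by the formal group law of $E$ already constructed in the preceding theorem.
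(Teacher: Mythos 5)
Your overall strategy (AHSS collapse plus comparison with the torus, resp.\ the James construction) is the same one the paper outsources to Adams, but two of your steps are not sound as written. First, the collapse of the Atiyah--Hirzebruch spectral sequence does \emph{not} follow from $H^\ast(BU(n);\ZZ)$ being even and torsion-free: with only even columns the odd differentials vanish for degree reasons, but $d_2, d_4,\ldots$ are unobstructed, and since $E^\ast$ is not assumed even they can be nonzero. (Concretely, for $E=ko$ and $X=\CP^\infty$ the $E_2$-page is even and torsion-free yet the spectral sequence does not collapse.) The collapse has to be extracted from the complex orientation: powers of $x$ make the AHSS for $\CP^\infty$, and hence for $(\CP^\infty)^{\times n}$, collapse because the $E_2$-generators are visibly permanent cycles, and then naturality along $BU(n)\to$ nothing --- rather along the torus map $(\CP^\infty)^{\times n}\to BU(n)$, which is split injective on integral cohomology --- kills all differentials for $BU(n)$; the same naturality argument handles $J_n$ and $\Omega\Sigma\CP^\infty$. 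Your later appeal to ``lifts of the ordinary Chern classes'' presupposes exactly this, so as written the argument is circular. Once the collapse is in place, your comparison of associated gradeds with Lemma~\ref{lem:symm1} does finish part (1) correctly.

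Second, for part (2) you identify the ring structure via the H-space multiplication of $\Omega\Sigma\CP^\infty$, but that map induces the \emph{coproduct} on $E^\ast\Omega\Sigma\CP^\infty$ (it is the source of the Whitney-type formula~(\ref{eq:whitney})), not the cup product; the computation $m_1\cdot m_1 = m_2 + 2m_{1,1} + \sum \Upsilon_{p,q}m_{1+p,1+q}$ you want to reproduce is a cup-product computation inside $E^\ast[[x_1,x_2,\ldots]]$. The correct route is the one the paper records right after the lemma: the quotient maps $(\CP^\infty)^{\times n}\to J_n$ induce ring maps $E^\ast\Omega\Sigma\CP^\infty \to E^\ast J_n \to E^\ast[[x_1,\ldots,x_n]]$ which are jointly injective (compatibly over $n$), identify $E^\ast J_n$ with $\QSym_n(E^\ast)$, and hence identify the cup product with the quasi-shuffle product of quasi-symmetric functions; the James splitting you invoke is only needed (if at all) for the additive statement. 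Also note that the freeness of $\tilde E^\ast(\CP^\infty)^{\wedge n}$ on the monomials with all $i_k\ge 1$ again rests on the orientation-driven K\"unneth isomorphism, not on evenness alone, so it should be fed by the same corrected collapse argument.
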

\begin{proof}
The main point is to prove that the Atiyah-Hirzebruch spectral sequences $H^\ast(X;E^\ast) \Rightarrow E^\ast X$ 
for the involved spaces $X$ collapse. 
The best reference for this still seems to be \cite{MR402720}*{II}.
We leave it to the reader to verify that those arguments still work without the implicit assumption of commutativity.
\end{proof}

We learn from this that even for non-commutative $E$ one has a
ring of Chern classes $E^\ast BU(n) = E^\ast [[c_1,c_2,\ldots,c_n]]$,
but with the caveat that these $c_k$ have \emph{no canonical definition}
and hence also none of the usual nice properties.
Their construction depends on a choice of the Vieta coordinates $y_k$ as in lemma \ref{lem:symm1}
and these depend on an ordering of the variables $x_j$ in an embedding 
\[ E^\ast BU(n) \hookrightarrow E^\ast [[x_1,\ldots,x_n]]. \] 
So, for example, for these Chern classes in $E$-theory there will in general be no Whitney sum formula
due to the lack of canonicity in their definition.
And of course the $c_k$ will neither commute among themselves, nor will they commute with the scalars from $E^\ast$.

This is to be contrasted with the ring $E^\ast \Omega\Sigma\CP^\infty = \QSym(E^\ast)$: 
the complex orientation $x\in E^\ast \CP^\infty$ canonically defines the basis 
\[ m_I = \sum_{a_1<a_2<\cdots<a_k} x_{a_1}^{i_1}\dots x_{a_k}^{i_k} \in \QSym(E^\ast) \]
of monomial quasi-symmetric functions. These $m_I$ therefore define characteristic classes for  
bundles with a classifying map that is factored through $\Omega\Sigma\CP^\infty \rightarrow BU$.
If $V$, $W$ are two such bundles one finds the Whitney sum rule
\begin{equation}\label{eq:whitney}
     m_I(V\oplus W) = \sum_{I=P\amalg Q} m_P(V) \cdot m_Q(W)
\end{equation}
where $m_I \mapsto \sum_{I=P\amalg Q} m_p \otimes m_Q$ is the usual shuffle coproduct 
on quasi-symmetric functions.

For future reference let us record a few more details concerning the identification $E^\ast \Omega\Sigma\CP^\infty = \QSym(E^\ast).$
Let
\[\CC P^\infty = J_1 \rightarrow \cdots \rightarrow J_n \rightarrow J_{n+1} \rightarrow \cdots \rightarrow \Omega \Sigma \CC P^\infty\]
be the James construction on $\CC P^\infty$.
Since $J_n$ is an identification space of the $n$-fold product $\CP^\infty\times\cdots\CP^\infty$ we have 
a map $E^\ast J_n \rightarrow E^\ast[[x_1,\ldots,x_n]]$.
This identifies $E^\ast J_n$ with the module $\QSym_n(E^\ast) = E^\ast\{m_I\,:\,\len(I)\le n\}$ of quasi-symmetric functions 
of length not bigger than $n$.
The maps $\CP^\infty\times\cdots\CP^\infty \rightarrow J_n\rightarrow BU(n)$ realize the inclusions
 $\Sym_n(E^\ast) \subset \QSym_n(E^\ast) \subset E^\ast[[x_1,\ldots,x_n]]$.

\end{subsection}
    
\begin{subsection}{$M\xi$ is the universal complex oriented ring spectrum}
Now let $MU$, resp.~$M\xi$ be the Thom spectra over $BU$, resp.~$\Omega\Sigma\CP^\infty\rightarrow BU$ (see \cite{MR2484353} for the latter). 
If $E$ is complex oriented we have Thom isomorphisms $E^\ast MU \cong E^\ast BU$ and $E^\ast M\xi \cong E^\ast \Omega\Sigma\CP^\infty$.
So the computations of the last section give us a good hold on maps $MU\rightarrow E$ and $M\xi \rightarrow E$. 
We shall show 
\begin{thm}\label{thm:mxiunivor}
Let $E$ be a complex oriented ring spectrum. Then there is a canonically defined multiplicative map $M\xi \rightarrow E$
that maps the complex orientation $x^{M\xi}\in M\xi^\ast\CP^\infty$ to the orientation $x^E\in E^\ast\CP^\infty$. 
\end{thm}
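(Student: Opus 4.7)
The plan is to exploit the fact that $\Omega\Sigma\CP^\infty$ is the James construction, i.e.\ the free homotopy-associative $H$-space on the pointed space $\CP^\infty$, together with the observation that by construction $M\xi$ is the Thom spectrum of the $H$-map $\Omega\Sigma\CP^\infty\rightarrow BU$ that extends $\CP^\infty\simeq BU(1)\hookrightarrow BU$. Since the Thom spectrum functor is monoidal, this presentation realizes $M\xi$ as the \enquote{free associative ring spectrum} on the pointed spectrum $\Sigma^{-2}\CP^\infty$ (the Thom spectrum of $L-1$ over $\CP^\infty$), subject only to the relation that the element coming from the basepoint inclusion $S^0=\Sigma^{-2}\CP^1\hookrightarrow\Sigma^{-2}\CP^\infty$ be the unit.

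Given a complex oriented $E$, its orientation $x^E\in E^2\CP^\infty$ is by definition a pointed map $\Sigma^{-2}\CP^\infty\rightarrow E$ whose restriction to $S^0$ is the unit $1_E$. By the universal property above it extends canonically to a multiplicative map $M\xi\rightarrow E$, and preservation of orientations $x^{M\xi}\mapsto x^E$ is automatic since both are by construction the image of the first James stage $J_1=\CP^\infty$.

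A hands-on implementation of this argument proceeds along the James filtration $\CP^\infty=J_1\rightarrow J_2\rightarrow\cdots\rightarrow\Omega\Sigma\CP^\infty$. Writing $T_n$ for the Thom spectrum of $J_n\rightarrow BU$, one first defines $T_n\rightarrow E$ as the descent of the iterated external product $(x^E)^{\boxtimes n}\colon(\Sigma^{-2}\CP^\infty)^{\wedge n}\rightarrow E$ through the James collapse; the descent uses precisely the unit condition $x^E|_{S^0}=1_E$. These maps are compatible with the stabilization $T_n\rightarrow T_{n+1}$, assemble to $M\xi=\hocolim_n T_n\rightarrow E$, and are multiplicative because the James product $J_n\times J_m\rightarrow J_{n+m}$ passes through Thom to the ring-spectrum multiplication on $M\xi$ while matching the iterated multiplication on $E$ applied to the $(x^E)^{\boxtimes k}$. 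The main obstacle is the descent through the James collapse at the Thom-spectrum level, which is essentially the content of the Baker-Richter construction; once this is in place the rest is a straightforward diagram chase.
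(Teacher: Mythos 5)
Your proposal has two halves, and each has a problem. The formal half — \enquote{$M\xi$ is the free associative ring spectrum on $\Sigma^{-2}\CP^\infty$ subject to the unit relation, so an orientation extends multiplicatively by the universal property} — is not available in the generality of the theorem: $E$ is only assumed to be associative up to homotopy, and an object that is \enquote{free modulo a relation} has no universal property in the homotopy category. Imposing the relation (bottom cell $\mapsto$ unit) is a colimit of structured ring spectra, and mapping out of it multiplicatively requires coherent ($A_\infty$) data both on $E$ and on the orientation; with such data one could invoke the universal property of Thom spectra of loop maps, but that is a different and much heavier toolkit, and it is exactly the hypothesis the theorem does not make. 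The hands-on half along the James filtration is the right shape, but the one step you defer — the descent of $(x^E)^{\boxtimes n}$ through the James collapse at the Thom-spectrum level, which you attribute to \enquote{the Baker–Richter construction} — is precisely the mathematical content that has to be supplied here. Baker and Richter construct $M\xi$ and study its structure; they do not show that an arbitrary orientation of an arbitrary, possibly non-commutative $E$ descends canonically and compatibly through the collapse maps.

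The paper supplies exactly this missing step cohomologically: using the collapse of the Atiyah--Hirzebruch spectral sequence it identifies $E^\ast J_n \cong \QSym_n(E^\ast)$ and then $E^\ast M\xi(n) = x_nx_{n-1}\cdots x_1\cdot\QSym_n(E^\ast)$ as a submodule of $E^\ast[[x_1,\ldots,x_n]]$, i.e.\ the restriction $E^\ast M\xi(n)\rightarrow E^\ast(M\eta\land\cdots\land M\eta)$ is injective with known image. Hence the class $x_n x_{n-1}\cdots x_1$ (note the order of the non-commuting factors matters) lifts canonically to $E^\ast M\xi(n)$, compatibly in $n$, and these lifts assemble into $\tau^E\colon M\xi\rightarrow E$; no obstruction-theoretic or coherence argument is needed because the lift lives in an identified submodule. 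Finally, multiplicativity is not a \enquote{straightforward diagram chase}: it is deduced from the Whitney sum formula $m_I(V\oplus W)=\sum_{I=P\amalg Q} m_P(V)\cdot m_Q(W)$ for the quasi-symmetric monomial classes, which is exactly where the canonicity of the $m_I$ (as opposed to the non-canonical Chern classes built from Vieta coordinates, whose associated map $\sigma^E\colon MU\rightarrow E$ is \emph{not} multiplicative) enters. To repair your write-up you would either have to assume $A_\infty$ structures and argue via the universal property of Thom spectra, or prove the descent step as the paper does.
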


To get a computational grasp on the cohomology of these Thom spectra we look at the Thom spaces 
\[ \begin{tikzcd}[row sep = small]
    M\eta \land \cdots \land M\eta \ar[r] & M\xi(n) \ar[r] & MU(n) \\
    \CP^\infty \times\cdots\times\CP^\infty \ar[r] & J_n \ar[r] & BU(n)  
\end{tikzcd} \]
Here $\eta$ is the canonical line bundle on $\CP^\infty$ and one has $E^\ast M\eta = x E^\ast[[x]]$.
\begin{lemma}
This gives a realization of $E^\ast M\xi(n)$ and $E^\ast MU(n)$ as submodules of $x_1\cdots x_n \cdot E^\ast[[x_1,\ldots,x_n]]$:
\begin{align*}
    E^\ast M\xi(n) &= x_n x_{n-1}\cdots x_1 \cdot \QSym_n(E^\ast),  \\
    E^\ast MU(n) &= y_n y_{n-1}\cdots y_1 \cdot\Sym_n(E^\ast).
\end{align*}
Here $y_1,\ldots,y_n$ are Vieta-coordinates as in lemma \ref{lem:symm1}
and the product $y_n y_{n-1}\cdots y_1$ (with this ordering of the factors) defines the Chern class $c_n\in E^\ast BU(n)$.
\end{lemma}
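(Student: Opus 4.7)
The plan is to use the Thom collapse map: for any complex rank-$r$ bundle $\xi$ on $X$ and a complex oriented $E$, the zero section $X_+\to X^\xi$ induces a cohomology map $\tilde E^\ast(X^\xi)\to E^\ast X$ which, under the Thom isomorphism, is multiplication by the Euler class $e(\xi)\in E^\ast X$. Reducing to the associated graded of the degree filtration (where all variables commute and the classical theory applies) shows that the relevant Euler classes are non-zero-divisors, so this map is injective. Applied to $\gamma_n$ over $BU(n)$ and $\xi_n$ over $J_n$, this will realize $\tilde E^\ast MU(n)$ and $\tilde E^\ast M\xi(n)$ as the principal left ideals generated by the respective Euler classes inside $\Sym_n(E^\ast) = E^\ast BU(n)$ and $\QSym_n(E^\ast) = E^\ast J_n$.

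Next I would identify the Euler classes explicitly. The Euler class of $\gamma_n$ is by definition the top Chern class $c_n$, and Lemma~\ref{lem:symm1} applied with $k=n$ identifies this with the ordered Vieta product $y_n y_{n-1}\cdots y_1\in\Sym_n(E^\ast)$; this is the content of the final sentence of the lemma. The Euler class of the external sum $\eta^{\boxplus n}$ on $(\CP^\infty)^n$ is, in the chosen convention for iterating the external Euler class of a single line bundle, the ordered product $x_n x_{n-1}\cdots x_1\in E^\ast[[x_1,\ldots,x_n]]$; one verifies it lies in $\QSym_n(E^\ast)$ by repeatedly applying the commutation $x_{j+1}x_j = x_j x_{j+1}+\sum_{p,q\ge 1}\Upsilon_{p,q}x_j^{1+p}x_{j+1}^{1+q}$ to rewrite $x_n\cdots x_1$ as a left-$E^\ast$-linear combination of monomial quasi-symmetric functions. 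This element represents the restricted Euler class of $\xi_n$ on $J_n$.

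Combining these steps gives
\[E^\ast M\xi(n) = x_n x_{n-1}\cdots x_1 \cdot \QSym_n(E^\ast) \quad\text{and}\quad E^\ast MU(n) = y_n y_{n-1}\cdots y_1 \cdot \Sym_n(E^\ast),\]
both contained in $x_1\cdots x_n\cdot E^\ast[[x_1,\ldots,x_n]]$ since each Euler class reduces to $x_1\cdots x_n$ in the associated graded. The main obstacle is reconciling the two presentations of the Euler class of $\xi_n$: as the external representative $x_n\cdots x_1$ and as the restriction $c_n = y_n\cdots y_1$ from $BU(n)$. These must generate the same principal ideal in $\QSym_n(E^\ast)$ because any two Euler classes of a given bundle differ by a unit, but verifying this carefully in the non-commutative setting — together with pinning down the correct ordering of the factors in the external Thom class — is the delicate part and requires the degree-filtration machinery already established earlier in the paper.
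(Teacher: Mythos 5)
Your route (zero section $+$ Thom isomorphism $+$ Euler class as non-zero-divisor) is genuinely different from the paper's, which never mentions Euler classes: the paper uses the cofibration $BU(n-1)\to BU(n)\to MU(n)$ (resp.\ $J_{n-1}\to J_n\to M\xi(n)$), gets a short exact sequence $E^\ast MU(n)\to \Sym_n(E^\ast)\to\Sym_{n-1}(E^\ast)$, and identifies the kernel of the restriction (which sends $y_n\mapsto 0$, hence $c_n\mapsto 0$) with $c_n\cdot\Sym_n(E^\ast)$ using the free left-module basis of monomials in the $c_k$. That argument sidesteps exactly the points where your proposal is thin.

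The genuine gap is the step \enquote{the Euler class of $\gamma_n$ is by definition the top Chern class $c_n$, hence $y_n\cdots y_1$.} In this setting there is no canonical Euler class: an \enquote{Euler class} can only mean the zero-section restriction of a chosen Thom class, and the Thom isomorphism the paper provides is merely an additive isomorphism from the collapsing Atiyah--Hirzebruch spectral sequence. To conclude that the zero-section image is the principal left ideal generated by one element you must first argue that $\tilde E^\ast MU(n)$ is free of rank one as a left $E^\ast BU(n)$-module on a Thom class (and keep track of which side the module structure acts on, since the lemma asserts $c_n\cdot\Sym_n(E^\ast)$ with the generator on the left); and to identify that generator's restriction with the specific ordered product $y_ny_{n-1}\cdots y_1$ you cannot appeal to the commutative dictum \enquote{Euler $=$ top Chern}, whose usual proofs (splitting principle, Whitney formula, or the Thom-space \emph{definition} of $c_n$) are unavailable here --- indeed the paper stresses that the $c_k$ are non-canonical and satisfy no Whitney formula, and this identification is essentially the content of the lemma you are proving, so the argument is circular at this point. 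The same issue undermines your final reconciliation: the claim that $x_n\cdots x_1$ and $c_n$ \enquote{must generate the same principal ideal because any two Euler classes differ by a unit} presupposes that both are zero-section restrictions of Thom-class generators of $\xi_n$ over $J_n$, which is again what has to be shown; and you explicitly defer this, together with the ordering of the factors, as \enquote{the delicate part.} As it stands the proposal assumes the key identifications rather than proving them; the kernel computation in $\Sym_n(E^\ast)$ and $\QSym_n(E^\ast)$ (or some substitute for it) is still needed.
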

\begin{proof}
For $E^\ast MU(n)$ one uses the cofiber sequence $BU(n-1) \rightarrow BU(n) \rightarrow BU(n)/BU(n-1) \simeq MU(n)$. In cohomology 
this induces a short exact sequence 
\[ E^\ast MU(n) \rightarrow \Sym_n(E^\ast) \rightarrow \Sym_{n-1}(E^\ast) \] 
where the right-hand map is induced by $x_k\mapsto x_k$ for $k<n$ and $x_n \mapsto 0$,
or equivalently (since $y_k \equiv x_k$ modulo $x_1,\ldots,x_{k-1}$) $y_k \mapsto y_k$ and $y_n \mapsto 0$.
For the elementary symmetric functions $c_k$ in the $y_k$ this also gives $c_k \mapsto c_k$ and $c_n \mapsto 0$ so that the kernel 
is $E^\ast MU(n) = c_n \cdot \Sym_n(E^\ast)$ as claimed.
We leave the analogous computation for $E^\ast M\xi(n)$ to the reader.
\end{proof}

The lemma shows that we get well-defined maps 
\[ \begin{tikzcd}[row sep = small, column sep = 3cm]
    \tau^E : M\xi \ar[r, "\cdots x_n x_{n-1} \cdots x_2 x_1"] & E, \\
    \sigma^E : MU \ar[r, "\cdots y_n y_{n-1} \cdots y_2 y_1"] & E.
\end{tikzcd} \]
We can take this $\tau^E$ as the comparison map in theorem \ref{thm:mxiunivor}:
\begin{proof}[Proof of theorem \ref{thm:mxiunivor}]
    We have exhibited the map $\tau^E$ that is canonically defined from just the complex orientation
    of $E$. It remains to show that $\tau^E$ is multiplicative. This follows from the Whitney sum formula (\ref{eq:whitney}). 
\end{proof}
The map $\sigma^E : MU \rightarrow E$ on the other hand is not going to be multiplicative
in general. It is still useful: among other things it will help to construct 
a splitting of $M\xi$ in section \ref{sec:splitting}.

\begin{lemma} The map $\sigma^{M\xi} : MU \rightarrow M\xi$ is a section of $\tau^{MU} : M\xi \rightarrow MU$.
\end{lemma}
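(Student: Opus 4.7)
The plan is to verify $\tau^{MU} \circ \sigma^{M\xi} = \id_{MU}$ level-by-level by unwinding the definitions of $\sigma^{M\xi}$ and $\tau^{MU}$ from the preceding lemma and reducing everything to the statement that an orientation-preserving ring map between complex oriented ring spectra takes Thom class to Thom class.

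First I will re-interpret $\sigma^E \colon MU \to E$ as the Thom class map. At each level $n$, $\sigma^E$ is represented by $y_n \cdots y_1 \in E^{2n} MU(n)$, which by the preceding lemma generates $E^\ast MU(n)$ as a free module over $\Sym_n(E^\ast)$; under the Thom isomorphism $E^\ast MU(n) \cong E^\ast BU(n)$ this element corresponds to $1 \in E^0 BU(n)$ and is therefore the universal Thom class $U^E_n$. Specialising to $E = MU$, the tautological Thom class $U^{MU}_n$ represents the identity at level $n$, so $\sigma^{MU} = \id_{MU}$.

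Next I will observe that $\tau^{MU}$ preserves complex orientations. At level $n = 1$, $M\xi(1) = \CP^\infty$ and $\tau^{MU}_1$ is represented by $x_1 = x^{MU} \in MU^\ast \CP^\infty$. Combined with multiplicativity from Theorem \ref{thm:mxiunivor}, this shows that the change-of-coefficients transformation $(\tau^{MU})_\ast \colon M\xi^\ast(-) \to MU^\ast(-)$ is a ring homomorphism sending $x^{M\xi}_i$ to $x^{MU}_i$ and respecting both the $\BFK$-action and the triangular structure.

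The main step is then to show that $(\tau^{MU})_\ast$ transports $y^{M\xi}_n \cdots y^{M\xi}_1$ to $y^{MU}_n \cdots y^{MU}_1$. Although the individual Vieta coordinates $y_k$ are constructed by an inductive procedure that is not manifestly canonical, their ordered product equals $(-1)^n c_n$, the top Chern class from Lemma \ref{lem:symm1}, which is a \emph{canonical} symmetric function defined purely in terms of the orientations $x_i$ and the ring operations on $E^\ast[[x_1,\ldots,x_n]]$. Since $(\tau^{MU})_\ast$ preserves all of this data, it must carry $c_n^{M\xi}$ to $c_n^{MU}$ and hence the two Thom classes to each other. I expect this to be the only real technical point: one needs to appeal to the universal characterisation of $c_n$ via non-commutative symmetric-function theory to see that $y_n \cdots y_1$ is transported across orientation-preserving ring maps even though the factors individually are not. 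Putting the pieces together, at each level $n$ the composite $\tau^{MU} \circ \sigma^{M\xi}$ is classified by $(\tau^{MU})_\ast(y^{M\xi}_n \cdots y^{M\xi}_1) = y^{MU}_n \cdots y^{MU}_1 = \sigma^{MU}_n$, and passing to the inverse limit over $n$ gives $\tau^{MU} \circ \sigma^{M\xi} = \sigma^{MU} = \id_{MU}$, as required.
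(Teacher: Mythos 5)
Your overall reduction is the same one the paper makes: everything comes down to showing that the class $y_n\cdots y_1\in M\xi^\ast MU(n)$ representing $\sigma^{M\xi}$ is carried by $(\tau^{MU})_\ast$ to the standard Thom class $x_n\cdots x_1=c_n\in MU^\ast MU(n)$. But the step you yourself flag as ``the only real technical point'' is exactly where your argument is not yet a proof. You justify it by saying that $y_n\cdots y_1$ is ``a \emph{canonical} symmetric function defined purely in terms of the orientations $x_i$ and the ring operations,'' invoking an unspecified ``universal characterisation of $c_n$.'' As stated this is in tension with the paper's own insistence that these Chern classes are \emph{not} canonical: their construction depends on a chosen ordering of the variables and on the Vieta coordinates, and no characterisation of $c_n$ independent of that construction is ever given. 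So the assertion that any orientation-preserving ring map must carry $c_n^{M\xi}$ to $c_n^{MU}$ is left unsupported; it is true, but it needs an argument.

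Two ways to close the gap. (i) The paper's route: the Vieta coordinates admit the quasi-determinant description $y_k=v_k x_k v_k^{-1}$ with $v_k$ a Vandermonde quasi-determinant, so after applying $\tau^{MU}$ one lands in the commutative ring $MU^\ast[[x_1,\ldots,x_n]]$ where the conjugation collapses and $(\tau^{MU})_\ast(y_k)=x_k$ factor by factor; the product then becomes $x_n\cdots x_1=c_n$, the usual Thom class, and $\tau^{MU}\sigma^{M\xi}=\id$ follows in the limit. (ii) Alternatively, make your naturality claim honest: with the ordering of the $x_i$ fixed, $y_k$ is the \emph{unique} solution of $v_k(x_k)\,x_k = y_k\,v_k(x_k)$ with $y_k\equiv x_k$ modulo higher-degree terms (right multiplication by the monic $v_k$ is injective, as one checks on the associated graded where the variables are central), so the construction is functorial under ring maps that preserve the orientation; since $y_k=x_k$ solves the equation over the commutative ring $MU^\ast$, naturality gives $(\tau^{MU})_\ast(y_k)=x_k$ and the same conclusion. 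Either supplement turns your outline into a complete proof. A minor further point: in your first paragraph, saying $y_n\cdots y_1$ ``corresponds to $1$ under the Thom isomorphism'' is circular in the non-commutative setting, since the Thom isomorphism is only defined after choosing such a class; for the one place you actually need it ($E=MU$, where $y_k=x_k$ and $c_n$ is the standard Thom class representing the identity) the statement is standard and suffices.
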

\begin{proof}
Let $y_1,y_2,\ldots,y_n$ be the Vieta coordinates in $M\xi^\ast[[x_1,\ldots,x_n]]$.
As noted in \cite{MR2132761}*{sect.~6.5} the $y_k$ can be constructed as $y_k=v_kx_kv_k^{-1}$ 
for certain Vandermonde determinants $v_k$. 
This shows that $\tau^{MU}(y_k) = x_k$ because $MU$ is commutative.
It follows that the Thom class $y_n\cdots y_1\in M\xi^\ast MU(n)$ maps to the usual Thom class $x_n\cdots x_1 = c_n\in MU^\ast MU(n)$.
In the limit this gives $\tau^{MU}\sigma^{M\xi} = \id$.
\end{proof}
\end{subsection}
\end{section}
\begin{section}{The formal group of $M\xi$}

We now want to work out how to do actual computations with the formal group law of 
$M\xi$. This serves as a preparation for the next section where we will show that 
this is in fact the universal one-dimensional commutative formal group law.

\begin{subsection}{The Hurewicz embedding}
The computations will use the Hurewicz embedding $\pi_\ast M\xi \subset H_\ast M\xi$. 
As in the classical commutative case the formal group law admits a logarithmic description
after extending the scalars from $\pi_\ast M\xi$ 
to $H_\ast M\xi \cong H_\ast \Omega\Sigma \CC P^\infty$.
We use the Hurewicz map in the form (taken from \cite{morava2020renormalization})
\[M\xi^\ast X \xrightarrow{\hur} \Hom\left(H^\ast \Omega\Sigma \CC P^\infty, H^\ast X\right) \cong H_\ast \Omega\Sigma \CC P^\infty \otimes H^\ast X.\]
Here $H=H\ZZ$ is the integral Eilenberg-MacLane spectrum and $\tau\colon{}M\xi\rightarrow H$ the natural truncation. 
For $\alpha\colon{}X\rightarrow M\xi$ and $\beta:\left(\Omega\Sigma \CC P^\infty\right)_+\rightarrow H$
the map $\hur(\alpha)(\beta)$ is given by the composition
\[X \xrightarrow{\,\alpha\,} M\xi \xrightarrow{\text{Thom diag.}}
M\xi \land \left(\Omega\Sigma \CC P^\infty\right)_+
\xrightarrow{\tau\land\beta}
H\land H \xrightarrow{\text{mult}} H.
\]
\begin{lemma}
    The map $\hur$ is multiplicative.
\end{lemma}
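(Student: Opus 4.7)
The plan is to reduce the multiplicativity of $\hur$ to three standard inputs: the multiplicativity of the truncation $\tau\colon{}M\xi\rightarrow H$, the behavior of a cohomology class $\beta\colon{}\Omega_+\rightarrow H$ under the H-space multiplication of $\Omega := \Omega\Sigma\CC P^\infty$, and the fact that the Thom diagonal of $M\xi$ realizes $M\xi$ as a comodule algebra over $\Omega_+$.

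First, spelling out what multiplicativity means: for $\alpha_1,\alpha_2\in M\xi^\ast X$ and $\beta\in H^\ast\Omega$ with coproduct $\Delta\beta = \sum\beta'\otimes\beta''$ (the coproduct on $H^\ast\Omega$ dual to the Pontryagin product on $H_\ast\Omega$), the goal is the convolution identity
\[ \hur(\alpha_1\cdot\alpha_2)(\beta) \,=\, \sum_{(\beta)} \hur(\alpha_1)(\beta')\cdot\hur(\alpha_2)(\beta''). \]
Under the isomorphism $\Hom(H^\ast\Omega,H^\ast X) \cong H_\ast\Omega\otimes H^\ast X$ this is precisely the product in the convolution algebra on the right-hand side. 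The left-hand side unfolds by definition to the composite
\[ X \xrightarrow{\Delta_X} X\wedge X \xrightarrow{\alpha_1\wedge\alpha_2} M\xi\wedge M\xi \xrightarrow{\mu_{M\xi}} M\xi \xrightarrow{\Delta_{M\xi}} M\xi\wedge\Omega_+ \xrightarrow{\tau\wedge\beta} H\wedge H \xrightarrow{\mu_H} H. \]

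The core step will be invoking the compatibility of the Thom diagonal with the ring structure of $M\xi$: since $M\xi$ is the Thom spectrum of an $A_\infty$-map out of the associative H-space $\Omega$ (see \cite{MR2484353}), the Thom construction yields the identity
\[ \Delta_{M\xi}\circ\mu_{M\xi} = (\mu_{M\xi}\wedge\mu_\Omega)\circ(\id\wedge\flip\wedge\id)\circ(\Delta_{M\xi}\wedge\Delta_{M\xi}), \]
where $\flip$ denotes the symmetric braiding of spectra. Substituting this into the composite above and then using $\beta\circ\mu_\Omega = \sum \mu_H\circ(\beta'\wedge\beta'')$ (the very definition of the coproduct on $H^\ast\Omega$) together with the multiplicativity of $\tau$, the long composite splits by associativity of $\mu_H$ into the sum $\sum \hur(\alpha_1)(\beta')\cdot\hur(\alpha_2)(\beta'')$, as required.

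The main obstacle is the Thom-diagonal compatibility in the non-homotopy-commutative setting. This is however a formal consequence of the Thom spectrum construction of \cite{MR2484353}: the argument needs only associativity of $\mu_\Omega$ and $\mu_{M\xi}$ together with the naturality of the Thom diagonal, and never appeals to commutativity; the twist $\flip$ in the identity is the spectrum-level symmetric braiding and does not interfere with the rest of the computation since it is canceled out when one applies $\tau\wedge\beta$ to both factors. The multiplicativity of $\tau$ is similarly standard: since $M\xi$ is a connective ring spectrum with $\pi_0 M\xi = \ZZ$, its natural truncation to $H = H\ZZ$ is the first Postnikov stage and is therefore a ring map.
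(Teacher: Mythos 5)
Your proof is correct, and it is the intended solution to what the paper simply leaves as an exercise: the three inputs you isolate (the Thom diagonal exhibiting $M\xi$ as an $\Omega\Sigma\CP^\infty_+$-comodule algebra, the truncation $\tau\colon M\xi\rightarrow H\ZZ$ being a ring map, and the coproduct on $H^\ast\Omega\Sigma\CP^\infty$ dual to the Pontryagin product) are exactly what is needed, and none of them uses homotopy commutativity of $M\xi$. One small point deserves a cleaner justification: the twist is not ``canceled by applying $\tau\wedge\beta$ to both factors'' per se --- after rewriting via naturality it becomes a flip of two $H$-factors sitting inside a fourfold product $H^{\wedge 4}\rightarrow H$, and it is absorbed because $H\ZZ$ is homotopy commutative (and all classes in play are even-dimensional, so no signs arise); with that sentence added the argument is complete.
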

We leave the proof as an exercise.

Now recall that $H_\ast \Omega\Sigma\CP^\infty = \ZZ\langle Z_1,Z_2,\ldots\rangle$
is the free associative algebra on generators $Z_k$, with $Z_k$ the image of the generator 
$\beta_{k}\in H_{2k}\CP^\infty$ (and $Z_0=1$ by convention).
We look at the Hurewicz map for $X$ an $n$-fold product 
of complex projective spaces.
\begin{lemma}\label{lem:hurlem}
    The Hurewicz map 
    \[M\xi^\ast \, \left(\CP^\infty\times\cdots\times\CP^\infty\right) \xrightarrow{\quad\hur\quad} H_\ast \Omega\Sigma\CP^\infty \otimes \ZZ[[T_1,\ldots,T_n]]\]
    is injective. On the generators $x_j$ it is given by 
    \begin{equation}\label{hurxj}
        x_j \mapsto T_j + Z_1T_j^2 + Z_2T_j^3 + \cdots.
    \end{equation}
\end{lemma}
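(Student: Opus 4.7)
The plan is to split the statement into two parts: establishing the explicit formula for $\hur(x_j)$ on generators, and then deducing injectivity. The formula is the substantive part; once it is in hand, injectivity reduces to a filtration argument combined with injectivity of $\hur$ on the coefficient ring $M\xi^\ast$.

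For the formula, I would use naturality under the projection $(\CP^\infty)^n\to\CP^\infty$ onto the $j$-th factor to reduce to the case $n=j=1$, i.e.~the complex orientation $x\colon\CP^\infty\to\Sigma^2M\xi$ itself. This map factors through the Thom spectrum $M\eta\simeq\Sigma^{-2}\CP^\infty$ of the canonical line bundle, corresponding to the inclusion $J_1=\CP^\infty\hookrightarrow\Omega\Sigma\CP^\infty$. The Thom diagonal $M\xi\to M\xi\land(\Omega\Sigma\CP^\infty)_+$ restricted to $M\eta$ factors through $M\eta\land(\CP^\infty)_+$ and agrees with the standard Thom diagonal of $\eta$. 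Test against $\beta\colon(\Omega\Sigma\CP^\infty)_+\to\Sigma^{2k}H$ detecting $Z_k$; since $Z_k$ is the image of $\beta_k\in H_{2k}\CP^\infty$ under $J_1\hookrightarrow\Omega\Sigma\CP^\infty$, the restriction of $\beta$ to $(\CP^\infty)_+$ represents the cohomology class $T^k$. After pairing with the integral Thom class $\tau(x)=U_\eta\in H^2M\eta$ via the multiplication of $H$, one obtains $U_\eta\cdot T^k=T^{k+1}$ in $H^{2(k+1)}\CP^\infty$ via the Thom isomorphism $H^\ast M\eta=T\cdot H^\ast\CP^\infty$. Summing over $k\ge 0$ with the convention $Z_0=1$ yields $\hur(x)=T+Z_1T^2+Z_2T^3+\cdots$.

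For injectivity, I would filter both sides by powers of the ideal generated by the $x_j$ respectively the $T_j$. Because $\hur(x_j)\equiv T_j$ modulo the second filtration step and $\hur$ is multiplicative, the map respects these filtrations. The commutation rules $x_j\cdot r=r\cdot x_j+\phi_1(r)x_j^2+\cdots$ and (for $l>k$) $x_lx_k=x_kx_l+\sum\Upsilon_{p,q}x_k^{1+p}x_l^{1+q}$ show that all commutators lie in strictly higher filtration, so the associated graded of the source becomes the ordinary polynomial ring $M\xi^\ast[x_1,\ldots,x_n]$ in central variables, and likewise on the target. The induced map on associated graded reduces to $\hur_{M\xi^\ast}\otimes\id$ under the substitution $x_j\leftrightarrow T_j$. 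Since both filtrations are Hausdorff and complete (power-series rings), injectivity reduces to injectivity of the Hurewicz map on the coefficient ring $M\xi^\ast$, which follows from the Baker--Richter result that $M\xi_\ast$ is torsion-free together with the rational Thom isomorphism $M\xi\land H\QQ\simeq(\Omega\Sigma\CP^\infty)_+\land H\QQ$, which forces $\hur$ to be a rational isomorphism.

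The main obstacle I expect is the first step: carefully tracking what the Thom diagonal of a non-commutative Thom spectrum does on the Thom class, since the commutativity hypotheses implicit in standard references must be verified to play no role here. Once the line-bundle computation is pinned down, the remainder of the argument is essentially formal bookkeeping with completed filtrations.
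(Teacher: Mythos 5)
Your proposal is correct, but it takes a genuinely different route from the paper, whose proof of this lemma is essentially a citation: the one-variable formula is taken from \cite{MR3243390}*{(2.4) and the surrounding discussion}, injectivity from \cite{MR3243390}*{Prop.~2.3}, and the passage to several factors is declared straightforward. You instead re-derive both halves: the formula by restricting the Thom diagonal of $M\xi$ along $M\eta \simeq \Sigma^{-2}\CP^\infty \to M\xi$ and pairing against the classes dual to the $Z_k$ (this is sound, and your worry about non-commutativity is indeed vacuous here since every multiplication in sight happens in $H$, which is commutative); and injectivity by filtering both sides by total degree in the variables, noting that all commutators raise filtration so that the associated graded map is $\hur_{M\xi^\ast}\otimes\id$ on a polynomial ring with central variables, and then invoking completeness/Hausdorffness to reduce to injectivity of the coefficient Hurewicz $\pi_\ast M\xi \to H_\ast M\xi$. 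That last input is exactly what \cite{MR3243390}*{Prop.~2.3} asserts, and your alternative derivation of it (torsion-freeness of $M\xi_\ast$, which comes from the $p$-local $BP$-splitting of \cite{MR2484353}, plus the rational stable Hurewicz isomorphism) is also valid — the rational Thom equivalence you mention is not even needed for this step. What your version buys is a self-contained argument that makes transparent where the multi-variable case and the non-centrality of the $x_j$ enter (namely, nowhere essential); what the paper's version buys is brevity, outsourcing both the Thom-diagonal computation and the injectivity to Baker--Richter.
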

\begin{proof}
    The case of a single factor has been worked out in \cite{MR3243390}*{section 2}, see (2.4) therein and the subsequent discussion (note that our $(\hur,x,T)$ are denoted $(\Theta,x_\xi,x_H)$ there). 
    The extension to more than one factor is straightforward.
    The injectivity is proved in \cite{MR3243390}*{Prop.~2.3}.
\end{proof}
The power of this result stems from the fact that the $T_j$ on the destination 
side are \emph{central} elements. 
As a first application we use this map to compute the $\BFK$-action 
on $M\xi_\ast$.
Mapping the defining relation $x\cdot a = \sum_{k\ge 0}\phi_k(a)x^{1+k}$ 
from $M\xi^\ast[[x]]$ to $H_\ast M\xi[[T]]$ gives 
\begin{equation}\label{phikZ}
\sum_{j\ge 0} (Z_ja)\, T^{1+j} = \sum_{k\ge 0} \phi_k(a) \left(\sum_{j\ge 0} Z_j T^{1+j}\right)^{1+k}.
\end{equation}
From this the $\phi_k(a)$ can be computed recursively (see figure \ref{figphikz}).

\begin{figure}
\begin{align*} 
    \phi_{1}(Z_{2}) &= Z_{1} Z_{2} - Z_{2} Z_{1}
    &
    \phi_{2}(Z_{1}) &= -Z_{1} Z_{2} + Z_{2} Z_{1}
    \\
    \phi_{1}(Z_{3}) &= Z_{1} Z_{3} - Z_{3} Z_{1}
    &
    \phi_{2}(Z_{2}) &= -2 Z_{1} Z_{2} Z_{1} + 2 Z_{2} Z_{1}^{2}
    \\
    \phi_{3}(Z_{1}) &= -Z_{1} Z_{3} + Z_{3} Z_{1} + 3 Z_{1} Z_{2} Z_{1} - 3 Z_{2} Z_{1}^{2}
    &
    \phi_{1}(Z_{4}) &= Z_{1} Z_{4} - Z_{4} Z_{1}
\end{align*} 
\begin{align*}
    \phi_{2}(Z_{3}) &= Z_{2} Z_{3} - Z_{3} Z_{2} - 2 Z_{1} Z_{3} Z_{1} + 2 Z_{3} Z_{1}^{2}
    \\
    \phi_{3}(Z_{2}) &= -Z_{2} Z_{3} + Z_{3} Z_{2} - 2 Z_{1} Z_{2}^{2} + 2 Z_{2} Z_{1} Z_{2} + 5 Z_{1} Z_{2} Z_{1}^{2} - 5 Z_{2} Z_{1}^{3}
    \\
    \phi_{4}(Z_{1}) &= -Z_{1} Z_{4} + Z_{4} Z_{1} + 3 Z_{1} Z_{2}^{2} + 4 Z_{1} Z_{3} Z_{1} - 3 Z_{2} Z_{1} Z_{2} - 4 Z_{3} Z_{1}^{2}
    \\&\qquad  - 9 Z_{1} Z_{2} Z_{1}^{2} + 9 Z_{2} Z_{1}^{3}
    \\
    \phi_{1}(Z_{5}) &= Z_{1} Z_{5} - Z_{5} Z_{1}
    \\
    \phi_{2}(Z_{4}) &= Z_{2} Z_{4} - Z_{4} Z_{2} - 2 Z_{1} Z_{4} Z_{1} + 2 Z_{4} Z_{1}^{2}
    \\
    \phi_{3}(Z_{3}) &= -2 Z_{1} Z_{3} Z_{2} - 3 Z_{2} Z_{3} Z_{1} + 2 Z_{3} Z_{1} Z_{2} + 3 Z_{3} Z_{2} Z_{1} + 5 Z_{1} Z_{3} Z_{1}^{2}  - 5 Z_{3} Z_{1}^{3}
    \\
    \phi_{4}(Z_{2}) &= -Z_{2} Z_{4} + Z_{4} Z_{2} - 2 Z_{1} Z_{2} Z_{3} + 2 Z_{2} Z_{1} Z_{3} + 4 Z_{2} Z_{3} Z_{1} - 4 Z_{3} Z_{2} Z_{1} 
    \\&\qquad + 5 Z_{1} Z_{2} Z_{1} Z_{2} + 7 Z_{1} Z_{2}^{2}Z_{1} - 5 Z_{2} Z_{1}^{2}Z_{2} - 7 Z_{2} Z_{1} Z_{2} Z_{1}
    \\&\qquad - 14 Z_{1} Z_{2} Z_{1}^{3} + 14 Z_{2} Z_{1}^{4}
\\
\phi_{5}(Z_{1}) &= -Z_{1} Z_{5} + Z_{5} Z_{1} + 3 Z_{1} Z_{2} Z_{3} + 4 Z_{1} Z_{3} Z_{2} + 5 Z_{1} Z_{4} Z_{1} - 3 Z_{2} Z_{1} Z_{3}
\\&\qquad - 4 Z_{3} Z_{1} Z_{2} - 5 Z_{4} Z_{1}^{2} - 9 Z_{1} Z_{2} Z_{1} Z_{2} - 12 Z_{1} Z_{2}^{2}Z_{1} - 14 Z_{1} Z_{3} Z_{1}^{2} 
\\&\qquad + 9 Z_{2} Z_{1}^{2}Z_{2} + 12 Z_{2} Z_{1} Z_{2} Z_{1} + 14 Z_{3} Z_{1}^{3} + 28 Z_{1} Z_{2} Z_{1}^{3} - 28 Z_{2} Z_{1}^{4}
\end{align*}
\caption{Some $\phi_k(Z_l)$ in $H_\ast M\xi$.}\label{figphikz}
\end{figure}

We next look at the $\Upsilon_{p,q}\in M\xi_\ast$, defined by 
$yx = \sum_{p,q} \Upsilon_{p,q} x^{1+p}y^{1+q}$.
We already know $\Upsilon_{0,0} = 1$ and $\Upsilon_{0,\ast} = \Upsilon_{\ast,0} = 0$.
Mapping this equation into $H_\ast M\xi[[T_1,T_2]]$ lets us compute the $\Upsilon_{p,q}$ explicitly
(see figure \ref{figupsZ}).

\begin{figure}
\begin{align*}
    \Upsilon_{1,2} =& - \Upsilon_{2,1} =  Z_{1} Z_{2} - Z_{2} Z_{1}
    \\
    \Upsilon_{1,3} =& - \Upsilon_{3,1} =  Z_{1} Z_{3} - Z_{3} Z_{1} - 3 Z_{1} Z_{2} Z_{1} + 3 Z_{2} Z_{1}^{2}
    \\
    \Upsilon_{2,2} =&  \hphantom{+} 0
    \\
    \Upsilon_{1,4} =& -\Upsilon_{4,1} = \ Z_{1} Z_{4} - Z_{4} Z_{1} - 3 Z_{1} Z_{2}^{2} - 4 Z_{1} Z_{3} Z_{1}  + 3 Z_{2} Z_{1} Z_{2} 
    \\ & \qquad + 4 Z_{3} Z_{1}^{2} + 9 Z_{1} Z_{2} Z_{1}^{2} - 9 Z_{2} Z_{1}^{3}
    \\
    \Upsilon_{2,3} =& -\Upsilon_{3,2} =  Z_{2} Z_{3} - Z_{3} Z_{2} + 2 Z_{1} Z_{2}^{2} - 2 Z_{1} Z_{3} Z_{1} - 2 Z_{2} Z_{1} Z_{2} 
    \\ & \qquad+ 2 Z_{3} Z_{1}^{2} + Z_{1} Z_{2} Z_{1}^{2} - Z_{2} Z_{1}^{3}
    \\
    \Upsilon_{1,5} =& -\Upsilon_{5,1} = Z_{1} Z_{5} - Z_{5} Z_{1} - 3 Z_{1} Z_{2} Z_{3} - 4 Z_{1} Z_{3} Z_{2} - 5 Z_{1} Z_{4} Z_{1}  + 3 Z_{2} Z_{1} Z_{3}
    \\ & \qquad + 4 Z_{3} Z_{1} Z_{2} + 5 Z_{4} Z_{1}^{2} + 9 Z_{1} Z_{2} Z_{1} Z_{2} + 12 Z_{1} Z_{2}^{2}Z_{1} + 14 Z_{1} Z_{3} Z_{1}^{2} 
    \\ & \qquad - 9 Z_{2} Z_{1}^{2}Z_{2} - 12 Z_{2} Z_{1} Z_{2} Z_{1} - 14 Z_{3} Z_{1}^{3} - 28 Z_{1} Z_{2} Z_{1}^{3} + 28 Z_{2} Z_{1}^{4}
    \\
    \Upsilon_{2,4} =& \hphantom{+} Z_{2} Z_{4} - Z_{4} Z_{2} + 2 Z_{1} Z_{2} Z_{3} - 2 Z_{1} Z_{4} Z_{1} - 2 Z_{2} Z_{1} Z_{3} - 4 Z_{2} Z_{3} Z_{1} + 4 Z_{3} Z_{2} Z_{1} 
    \\ & \qquad + 2 Z_{4} Z_{1}^{2} + Z_{1} Z_{2} Z_{1} Z_{2} - 7 Z_{1} Z_{2}^{2}Z_{1} + 8 Z_{1} Z_{3} Z_{1}^{2} - Z_{2} Z_{1}^{2}Z_{2} + 7 Z_{2} Z_{1} Z_{2} Z_{1} 
    \\ & \qquad - 8 Z_{3} Z_{1}^{3} - 4 Z_{1} Z_{2} Z_{1}^{3} + 4 Z_{2} Z_{1}^{4} 
    \\
    \Upsilon_{3,3} =&  -6 Z_{1} Z_{2} Z_{1} Z_{2} + 6 Z_{1} Z_{2}^{2}Z_{1} + 6 Z_{2} Z_{1}^{2}Z_{2} - 6 Z_{2} Z_{1} Z_{2} Z_{1}
    \\
    \Upsilon_{4,2} =& -\Upsilon_{2,4} +  6 Z_{1} Z_{2} Z_{1} Z_{2} - 6 Z_{1} Z_{2}^{2}Z_{1} - 6 Z_{2} Z_{1}^{2}Z_{2} + 6 Z_{2} Z_{1} Z_{2} Z_{1}
\end{align*}
\caption{Some $\Upsilon_{i,j}$ for the triangular structure of $M\xi$.}\label{figupsZ}
\end{figure}

Let $\Tria \subset M\xi_\ast$ be the $\BFK$-Hopf module subalgebra 
generated by the $\Upsilon_{p,q}$.

We consider the decreasing filtration of $H_\ast M\xi$ by polynomial degree in the $Z_k$,
i.e.~we set $\vert Z_{i_1}^{a_1}\cdots Z_{i_n}^{a_n} \vert = a_1+\cdots+a_m$ 
and let $F_k$ be the span of monomials $m$ with $\vert m\vert \ge k$.
The following lemma shows that the $\BFK$-action is nicely compatible with the 
filtration $F_k$:
\begin{lemma}\label{lem:liefilt}\quad
    \begin{enumerate}
        \item 
            For $a\in F_k$ and $l>0$ one has $\phi_l(a) \equiv [Z_l,a] \bmod F_{k+2}$.
        \item 
            One has $\Upsilon_{p,q} \equiv [Z_p,Z_q] \bmod F_{3}$.
    \end{enumerate}
\end{lemma}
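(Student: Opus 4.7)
The plan is to prove both parts by applying the Hurewicz embedding of Lemma~\ref{lem:hurlem} to the defining relations and reading off coefficients of the central variables $T_j$.

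For part~(1), I would extract the coefficient of $T^{1+l}$ from both sides of equation~(\ref{phikZ}). The left-hand side gives $Z_l a$. On the right-hand side, the summand $k=0$ contributes $\phi_0(a)\cdot Z_l = a Z_l$; the summand $k=l$ contributes $\phi_l(a)$, since the lowest-$T$-degree term of $(\sum_j Z_j T^{1+j})^{1+l}$ is $T^{1+l}$ with coefficient $1$; and each intermediate summand $0<k<l$ contributes $\phi_k(a)\cdot P_{k,l}$ where $P_{k,l}$ is a sum of words in the $Z_j$'s whose indices add up to $l-k>0$, hence $P_{k,l}\in F_1$. Rearranging yields
\[\phi_l(a) \;=\; [Z_l,a] \;-\; \sum_{0<k<l} \phi_k(a)\cdot P_{k,l}.\]
I would then induct on $l$. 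The base case $l=1$ has no correction and produces $\phi_1(a)=[Z_1,a]$ exactly. For $l>1$ with $a\in F_k$, the inductive hypothesis gives $\phi_{k'}(a)\equiv [Z_{k'},a]\pmod{F_{k+2}}$ for all $k'<l$, which in particular forces $\phi_{k'}(a)\in F_{k+1}$; multiplying by $P_{k',l}\in F_1$ lands in $F_{k+2}$, so every correction term vanishes modulo $F_{k+2}$ and we obtain $\phi_l(a)\equiv [Z_l,a]\pmod{F_{k+2}}$.

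For part~(2), I would apply $\hur$ to both sides of the braiding relation $yx=\sum_{p',q'}\Upsilon_{p',q'}x^{1+p'}y^{1+q'}$ and extract the coefficient of $T_1^{1+p}T_2^{1+q}$. The left-hand side produces a single quadratic word in $Z_p$ and $Z_q$; the $(p',q')=(0,0)$ summand on the right contributes, via $\Upsilon_{0,0}=1$, the opposite ordering of $Z_p, Z_q$; the $(p',q')=(p,q)$ summand contributes $\hur(\Upsilon_{p,q})$; and every other summand is $\Upsilon_{p',q'}$ multiplied by a word in the $Z_j$'s of positive polynomial degree. Once we know that $\Upsilon_{p',q'}\in F_2$ whenever $(p',q')\ne(0,0)$ --- which we can establish by induction on the bi-index $(p',q')$, or directly from the observation that $yx-xy$ has $\hur$-image in $F_2[[T_1,T_2]]$ --- the remaining summands lie in $F_3$, and solving for $\hur(\Upsilon_{p,q})$ gives $\Upsilon_{p,q}\equiv [Z_p,Z_q]\pmod{F_3}$.

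The main technical point is the dual role of the filtration in (1): the induction has to simultaneously verify the commutator congruence and the filtration bound $\phi_l(a)\in F_{k+1}$, since only the latter allows the correction terms $\phi_{k'}(a)\cdot P_{k',l}$ to be absorbed into $F_{k+2}$ at each step. Part~(2) then reduces to a single coefficient extraction once the analogous filtration bound on the $\Upsilon_{p',q'}$ is in hand.
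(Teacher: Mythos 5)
Your proof is correct and follows essentially the same route as the paper: push everything through the Hurewicz embedding of Lemma~\ref{lem:hurlem}, compare coefficients of the central variables $T_j$, and use the filtration $F_k$ plus induction to absorb the correction terms (your explicit handling of the $k=0$ term $aZ_l$ and of the bound $\phi_{k'}(a)\in F_{k+1}$ matches the paper's argument for part (1)). The only cosmetic difference is in part (2), where the paper invokes Remark~\ref{rem:ypqviaphi} to view the braiding as an extended $\BFK$-action and then applies part (1) to the coefficients $Z_j$, while you perform the two-variable coefficient extraction directly, supplying the needed preliminary bound $\Upsilon_{p',q'}\in F_2$ along the way; both amount to the same computation.
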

\begin{proof}
    We use (\ref{phikZ}) for the claims about $\phi_j(a)$.
    One easily computes $\phi_0(a) = a$ and $\phi_1(a) = [Z_1,a]$ for all $a$.
    Now assume inductively that $\phi_j(a) \equiv [Z_j,a] \bmod F_{k+2}$ for $j=1,\ldots,l-1$.
    For the right-hand side of (\ref{phikZ}) one finds 
    \[ \sum_{j=0}^{l-1} \phi_j(a) T^{1+j}  + \phi_l(a) T^{1+l} \bmod (F_{k+2}, T^{2+l}). \]
    Comparing coefficients of $T^{1+l}$ gives the desired $\phi_l(a) \equiv [Z_l,a] \bmod F_{k+2}$.
    
    To compute $\Upsilon_{p,q}$ we use the observation of Remark \ref{rem:ypqviaphi} 
    that the braiding $\Upsilon$ can itself be viewed as a special case of the $\BFK$-action:
    we write $yx = \sum \phi_k(x)y^{1+k}$ and find formally 
    \[\phi_k(x) = \sum \Upsilon_{p,k} x^{1+p}.\]
    From $x = T + Z_1T^2 + Z_2T^3 + \cdots$ we deduce 
    \[ \phi_k(x) = \sum_{j\ge  0} \phi_k(Z_j) T^{1+j} 
         \equiv \sum_{j\ge 0}\, [Z_k,Z_j] T^{1+j} \bmod F_3. \]
    Since we also have $T = x - Z_1x^2 + (Z_2-2Z_1^2) x^3 - \cdots \equiv \sum (-1)^j Z_jx^{1+j}$ 
    we find $\phi_k(x) \equiv \sum_{j\ge 0} [Z_k,Z_j] x^{1+j} \bmod F_3$, as claimed.
\end{proof}

We can now complete the proof of theorem \ref{thm:triangular} and show that $\Tria\subset M\xi_\ast$ coincides with the universal triangular structure
$\Tria$ that we considered before:
\begin{proof}[Proof of Thm.~\ref{thm:triangular}, last part]
Recall from remark \ref{rem:triauniv} that as an algebra $\Tria$ is generated by the 
$\phi_{i_1}\cdots \phi_{i_n} \Upsilon_{p,q}$ with $i_1\ge i_2\ge \cdots \ge i_n \ge p < q$.  We here need to show that $\Tria$ is actually the \emph{free} associative algebra on these generators.

We use the filtration $F_k$ on $\Tria$ that we constructed above.
It shows that the associated graded $\overline\Tria$ of $\Tria$ is embedded in $H_\ast M\xi = \ZZ\langle Z_1,Z_2,\ldots\rangle$ 
as the subalgebra generated by the iterated commutators 
\[C_{i_1,i_2,\ldots,i_n,p,q} = \big[Z_{i_1},[Z_{i_2},[\cdots,[Z_{i_n},[Z_p,Z_q]]]]\cdots\big].\]
In other words, if $L$ denotes the free Lie algebra generated by the $Z_k$ 
then $\overline \Tria$ is the associative enveloping algebra of the Lie algebra $[L,L]$.
It is classical that $[L,L]$ is itself a free Lie algebra and a family of free generators is given by the $C_{i_1,i_2,\ldots,i_n,p,q}$
(see, for example, \cite{MR4368864}*{Corollary 2.16 (ii)}). The claim follows. 
\end{proof}

\end{subsection}

\begin{subsection}{The formal group law and generators for $M\xi_\ast$}

It remains to compute some coefficients of the formal group law of 
$M\xi$. The Hurewicz map takes 
\begin{equation}\label{aijdef}
    \Delta(x) = \sum_{i,j} a_{i,j}x^iy^j
\end{equation}
to
\begin{equation*}
    \sum_{k\ge 0} Z_k(T_1+T_2)^{1+k}
    = \sum_{i,j} a_{i,j} \, 
    \left( \sum_{p\ge 0} Z_p T_1^{1+p}\right)^i \cdot 
    \left( \sum_{q\ge 0} Z_q T_2^{1+q}\right)^j.
\end{equation*}
This can be used to compute the $a_{i,j}$ explicitly (see figure \ref{figaij}).

\begin{figure}[!htbp]
\begin{align*} 
a_{1,1} &= 2 Z_{1}
&
a_{1,2} &= 3 Z_{2} - 2 Z_{1}^{2}
\\
a_{2,1} &= 3 Z_{2} - 2 Z_{1}^{2}
&
a_{1,3} &= 4 Z_{3} - 2 Z_{1} Z_{2} - 6 Z_{2} Z_{1} + 4 Z_{1}^{3}
\\
a_{2,2} &= 6 Z_{3} - 6 Z_{2} Z_{1} + 2 Z_{1}^{3}
&
a_{3,1} &= 4 Z_{3} - 2 Z_{1} Z_{2} - 6 Z_{2} Z_{1} + 4 Z_{1}^{3}
\end{align*}
\begin{align*}
a_{1,4} &= 5 Z_{4} - 2 Z_{1} Z_{3} - 6 Z_{2}^{2} - 12 Z_{3} Z_{1} + 4 Z_{1}^{2}Z_{2} + 6 Z_{1} Z_{2} Z_{1} + 15 Z_{2} Z_{1}^{2} - 10 Z_{1}^{4}
\\
a_{2,3} &= 10 Z_{4} - 3 Z_{2}^{2} - 16 Z_{3} Z_{1} + 2 Z_{1} Z_{2} Z_{1} + 12 Z_{2} Z_{1}^{2} - 4 Z_{1}^{4}
\\
a_{3,2} &= 10 Z_{4} - 3 Z_{2}^{2} - 16 Z_{3} Z_{1} + 2 Z_{1}^{2}Z_{2} + 12 Z_{2} Z_{1}^{2} - 4 Z_{1}^{4}
\\
a_{4,1} &= 5 Z_{4} - 2 Z_{1} Z_{3} - 6 Z_{2}^{2} - 12 Z_{3} Z_{1} + 4 Z_{1}^{2}Z_{2} + 6 Z_{1} Z_{2} Z_{1} + 15 Z_{2} Z_{1}^{2} - 10 Z_{1}^{4}
\\
a_{1,5} &= 6 Z_{5} - 2 Z_{1} Z_{4} - 6 Z_{2} Z_{3} - 12 Z_{3} Z_{2} - 20 Z_{4} Z_{1} + 4 Z_{1}^{2}Z_{3} + 6 Z_{1} Z_{2}^{2} + 8 Z_{1} Z_{3} Z_{1}
\\&\qquad  + 15 Z_{2} Z_{1} Z_{2} + 21 Z_{2}^{2}Z_{1} + 36 Z_{3} Z_{1}^{2} - 10 Z_{1}^{3}Z_{2} - 14 Z_{1}^{2}Z_{2} Z_{1}
\\&\qquad  - 18 Z_{1} Z_{2} Z_{1}^{2} - 42 Z_{2} Z_{1}^{3} + 28 Z_{1}^{5}
\\
a_{2,4} &= 15 Z_{5} - 3 Z_{2} Z_{3} - 12 Z_{3} Z_{2} - 35 Z_{4} Z_{1} + 2 Z_{1} Z_{3} Z_{1} + 6 Z_{2} Z_{1} Z_{2} + 15 Z_{2}^{2}Z_{1} 
\\&\qquad + 42 Z_{3} Z_{1}^{2} - 4 Z_{1}^{2}Z_{2} Z_{1} - 6 Z_{1} Z_{2} Z_{1}^{2} - 30 Z_{2} Z_{1}^{3} + 10 Z_{1}^{5}
\\
a_{3,3} &= 20 Z_{5} - 8 Z_{3} Z_{2} - 40 Z_{4} Z_{1} + 2 Z_{1} Z_{2}^{2} + 6 Z_{2} Z_{1} Z_{2} + 6 Z_{2}^{2}Z_{1} + 40 Z_{3} Z_{1}^{2}
\\&\qquad - 4 Z_{1}^{3}Z_{2} - 4 Z_{1} Z_{2} Z_{1}^{2} - 24 Z_{2} Z_{1}^{3} + 8 Z_{1}^{5}
\\
a_{4,2} &= 15 Z_{5} - 3 Z_{2} Z_{3} - 12 Z_{3} Z_{2} - 35 Z_{4} Z_{1} + 2 Z_{1}^{2}Z_{3} + 12 Z_{2} Z_{1} Z_{2} + 9 Z_{2}^{2}Z_{1} 
\\&\qquad + 42 Z_{3} Z_{1}^{2} - 4 Z_{1}^{3}Z_{2} - 6 Z_{1}^{2}Z_{2} Z_{1} - 30 Z_{2} Z_{1}^{3} + 10 Z_{1}^{5}
\\
a_{5,1} &= 6 Z_{5} - 2 Z_{1} Z_{4} - 6 Z_{2} Z_{3} - 12 Z_{3} Z_{2} - 20 Z_{4} Z_{1} + 4 Z_{1}^{2}Z_{3} + 6 Z_{1} Z_{2}^{2} + 8 Z_{1} Z_{3} Z_{1}
\\&\qquad  + 15 Z_{2} Z_{1} Z_{2} + 21 Z_{2}^{2}Z_{1} + 36 Z_{3} Z_{1}^{2} - 10 Z_{1}^{3}Z_{2} - 14 Z_{1}^{2}Z_{2} Z_{1}
\\&\qquad  - 18 Z_{1} Z_{2} Z_{1}^{2} - 42 Z_{2} Z_{1}^{3} + 28 Z_{1}^{5}
\end{align*}
\caption{Some coefficients $a_{i,j}$ of the formal group law of $M\xi$.}\label{figaij}
\end{figure}

Recall that in the case of $MU$ the coefficients of its formal group law can be used to define 
polynomial generators of the Lazard ring $MU_\ast$.
The following Lemma is well-known:
\begin{lemma}\label{mugens}
    Let $F(x,y) = \sum b_{i,j}x^iy^j$ be the formal group law of $MU$.
    For every $n> 1$ choose integers $\lambda^{(n)}_1, \lambda^{(n)}_2, \ldots,
    \lambda^{(n)}_{n-1}$ such that 
    \[\gcd\left( \binom{n}{1}, \binom{n}{2}, \ldots, \binom{n}{n-1} \right) 
    = \sum_{k=1}^{n-1} \lambda^{(n)}_k \binom{n}{k}.\]
    Then $x_n = \sum_{k=1}^{n-1} \lambda^{(n+1)}_k b_{k,n-k+1}$ 
    is a family of free polynomial generators for $MU_\ast$.
\end{lemma}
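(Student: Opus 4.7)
The plan is to reduce modulo decomposables and invoke Lazard's structure theorem on the indecomposable quotient. Write $I\subset \Laz$ for the augmentation ideal and $Q\Laz = I/I^2$ for the module of indecomposables. Lazard's theorem says that $\Laz\cong\ZZ[t_1,t_2,\ldots]$ with $|t_n|=2n$, so $(Q\Laz)_{2n}$ is free of rank one for each $n\ge 1$. The crucial input is the classical identification of the coefficients of $F$ modulo $I^2$: there is a choice of generator $\xi_n\in(Q\Laz)_{2n}$ such that, for all $i,j\ge 1$ with $i+j=n+1$,
\[
b_{i,j}\;\equiv\;\frac{\binom{n+1}{i}}{d_{n+1}}\,\xi_n\pmod{I^2},
\qquad d_{n+1}=\gcd\!\left(\tbinom{n+1}{1},\ldots,\tbinom{n+1}{n}\right).
\]
This is proved by exhibiting the symmetric $2$-cocycle $B_{n+1}(x,y)=d_{n+1}^{-1}\bigl((x+y)^{n+1}-x^{n+1}-y^{n+1}\bigr)$ and checking that any formal group law of the shape $x+y+\sum_{n\ge1}\xi_n B_{n+1}(x,y)$ modulo $I^2$ is compatible with the universal property; I would cite this rather than reprove it.

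Second, plug in the definition of $x_n$ and use the hypothesis on the $\lambda^{(n+1)}_k$:
\[
x_n\;=\;\sum_{k=1}^{n}\lambda^{(n+1)}_k\,b_{k,n-k+1}\;\equiv\;\frac{1}{d_{n+1}}\left(\sum_{k=1}^{n}\lambda^{(n+1)}_k\binom{n+1}{k}\right)\xi_n\;=\;\xi_n\pmod{I^2}.
\]
(Note the sum can be taken over $1\le k\le n$ since $b_{0,\ast}=b_{\ast,0}=0$ in $I$ and we chose coefficients only for $1\le k\le n$; the hypothesis matches $d_{n+1}$ exactly because $\binom{n+1}{0}=\binom{n+1}{n+1}=1$ do not enter the gcd of the \emph{middle} binomials.) Hence $x_n$ maps to a free generator of $(Q\Laz)_{2n}$.

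Finally, conclude by a Nakayama-plus-Poincaré-series argument. Since $\Laz$ is connected graded and the $\{x_n\}$ hit a set of generators of $Q\Laz$ in every positive degree, they generate $I$ as an ideal and hence $\Laz$ as a ring. The resulting graded surjection $\ZZ[T_1,T_2,\ldots]\twoheadrightarrow\Laz$, $T_n\mapsto x_n$ with $|T_n|=2n$, is a map between two polynomial rings with identical Poincaré series, so it must be an isomorphism. Therefore the $x_n$ are algebraically independent polynomial generators of $\Laz=MU_\ast$.

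The only real obstacle is verifying the normalization of Lazard's formula for $b_{i,j}\bmod I^2$ (the appearance of $d_{n+1}$ rather than $p$, and the precise factor $\binom{n+1}{i}/d_{n+1}$); everything else is a clean bookkeeping argument. I would handle that obstacle by citing Lazard's original computation (or the standard textbook treatment, e.g.\ in Ravenel's green book) rather than redo it here.
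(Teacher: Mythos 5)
Your argument is correct, and since the paper's own ``proof'' of this lemma consists solely of the citation \cite{MR3438568}*{Remark 4.4}, there is no internal argument to compare against; what you wrote is precisely the classical Lazard--Adams argument that such a reference contains. The two inputs you isolate are the right ones: Lazard's comparison lemma (symmetric $2$-cocycle lemma) gives $b_{i,j}\equiv\bigl(\binom{n+1}{i}/d_{n+1}\bigr)\xi_n \bmod I^2$ for $i+j=n+1$, with $\xi_n$ a generator of the infinite cyclic group $(I/I^2)_{2n}$, so your choice of the $\lambda^{(n+1)}_k$ makes $x_n\equiv\xi_n$ modulo decomposables; then the graded-Nakayama induction shows the $x_n$ generate, and the Poincar\'e-series comparison with Lazard's structure theorem $MU_\ast\cong\ZZ[t_1,t_2,\ldots]$ (a degreewise surjection of finitely generated free abelian groups of equal rank is an isomorphism) upgrades generation to a free polynomial generating set. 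One small point worth flagging: you silently replaced the upper summation limit $n-1$ in the statement by $n$. As literally stated, the lemma requires either that extension or the normalization $\lambda^{(n+1)}_n=0$, which is always available because $\binom{n+1}{k}=\binom{n+1}{n+1-k}$ (and $b_{k,n+1-k}$, $b_{n+1-k,k}$ agree modulo $I^2$ up to the same binomial factor); so your reading is the intended one, but it deserves the one-line justification rather than the slightly garbled parenthetical you gave.
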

\begin{proof}
    See \cite{MR3438568}*{Remark 4.4}.
\end{proof}

We will show in section \ref{sec:fguniv} that the same procedure also gives generators of $M\xi$:
\begin{thm}\label{thm:mxigens}
    Let $a_{i,j}$ be the coefficients of the formal group law of $M\xi$ as in (\ref{aijdef})
    and choose integers $\lambda^{(n)}_k$ as in Lemma \ref{mugens} and define 
    $X_n\in M\xi_n$ as
    \[X_n = \sum_{k=1}^{n-1} \lambda^{(n)}_k a_{k,n-k+1}.\]
    Then $M\xi_\ast$ is the free $\Tria_\ast$-module with basis 
    given by the monomials $X_1^{a_1}X_2^{a_2}\cdots X_n^{a_n}$. 
\end{thm}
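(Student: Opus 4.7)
The plan is to work through the Hurewicz embedding of Lemma~\ref{lem:hurlem}, refine it against the filtration $F_k$ by polynomial degree in the $Z_k$, and match $M\xi_\ast$ against a Poincar\'e--Birkhoff--Witt decomposition of the tensor algebra $H_\ast M\xi = U(L) = \ZZ\langle Z_1, Z_2, \ldots\rangle$, where $L$ is the free Lie algebra on the $Z_k$. The last part of the proof of Theorem~\ref{thm:triangular} already identifies the image of $\Tria_\ast$ under the Hurewicz map with $U([L,L])$, the enveloping algebra of the derived subalgebra. Since $L/[L,L]$ is the free abelian module $V$ spanned by the classes of the $Z_k$, the PBW theorem for the Lie subalgebra $[L,L]\subset L$ makes $H_\ast M\xi$ a free left $\Tria_\ast$-module with basis the ordered monomials in any lift of $V$.

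Next I would locate the Hurewicz image of each $X_n$ in this decomposition. Extracting the $T_1^k T_2^{n-k+1}$-coefficient from the image of $\Delta(x) = \sum a_{i,j}x^iy^j$ in $H_\ast M\xi[[T_1,T_2]]$ produces the congruence $a_{k,n-k+1}\equiv \binom{n+1}{k}\,Z_n\pmod{F_2}$. The integer coefficients of Lemma~\ref{mugens} are arranged so that $X_n \equiv c_n\,Z_n\pmod{F_2}$ for a positive integer $c_n\in\{1,p\}$ (namely $c_n = p$ when $n+1$ is a prime power $p^s$, else $c_n=1$). Hence the natural $\Tria_\ast$-linear map
\[\Phi\colon \Tria_\ast\otimes_{\ZZ} \ZZ\bigl\{X_1^{a_1}X_2^{a_2}\cdots X_n^{a_n}\bigr\} \longrightarrow M\xi_\ast\]
is injective: on the associated graded of $F_\bullet$ it becomes the $U([L,L])$-linear embedding $U([L,L])\otimes\ZZ[\bar X_1,\bar X_2,\ldots]\hookrightarrow U([L,L])\otimes S(V)$ with $\bar X_n\mapsto c_n\bar Z_n$, which is injective in each weight piece since each $c_n\neq 0$.

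Surjectivity of $\Phi$ is the delicate step. The multiplicative comparison map $\tau^{MU}\colon M\xi\to MU$ carries $a_{i,j}^{M\xi}\mapsto b_{i,j}^{MU}$, hence $X_n\mapsto x_n$ the classical Lazard generator of $MU_\ast$. Moreover every $\Upsilon_{p,q}$ vanishes under $\tau^{MU}_\ast$ because $MU$ is commutative (so the $\BFK$-action on $MU_\ast$ is trivial, forcing $\tau^{MU}_\ast\Tria_{\ge 1}=0$), which gives $\bar\Tria\cdot M\xi_\ast\subseteq \ker(\tau^{MU}_\ast)$. Combined with Lazard's theorem $MU_\ast = \ZZ[x_1,x_2,\ldots]$, this yields a surjection $M\xi_\ast/\bar\Tria\cdot M\xi_\ast\twoheadrightarrow MU_\ast$ factoring through the $X$-monomials. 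A degree-by-degree induction along the filtration $F_k$, using the PBW identification to control the obstruction at each stage, then lifts surjectivity of the quotient map back to $\Phi$.

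The main obstacle is precisely this surjectivity step, and specifically the task of pinning down the image of $M\xi_\ast$ inside $H_\ast M\xi$ as \emph{exactly} $\Tria_\ast\cdot\ZZ\{X_1^{a_1}X_2^{a_2}\cdots\}$ rather than something strictly larger. The cleanest handle on this comes from marrying the universal property of $M\xi$ (Theorem~\ref{thm:mxiunivor}) with Lazard's classical theorem for $MU_\ast$ and bookkeeping the commutator contributions through the filtration; the injectivity half and the PBW skeleton are essentially formal once the leading-term calculation for $X_n$ is in hand.
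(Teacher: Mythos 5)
Your injectivity half is essentially sound, and it runs on machinery the paper itself provides: the leading--term computation $a_{k,n-k+1}\equiv\binom{n+1}{k}\,Z_n\bmod F_2$ is correct, the identification of $\gr\Tria_\ast$ with $\UAss([L,L])$ is exactly what the last part of the proof of Theorem~\ref{thm:triangular} establishes, and the PBW decomposition of $H_\ast M\xi$ then gives injectivity of your map $\Phi$ on associated graded, hence injectivity, because the filtration $F_\bullet$ is bounded in each topological degree and the Hurewicz map is injective (Lemma~\ref{lem:hurlem}). The genuine gap is the surjectivity step, and the mechanism you propose cannot close it. That $\tau^{MU}_\ast$ kills $\bar\Tria$ and sends $X_n$ to Lazard's generator $x_n$ only produces a surjection of a \emph{quotient} of $M\xi_\ast$ onto $MU_\ast$, i.e.\ a lower bound; surjectivity of $\Phi$ needs an upper bound, namely a determination of exactly which lattice inside $H_\ast M\xi$ the Hurewicz image of $M\xi_\ast$ is. Injectivity plus equality of graded ranks does not give this over $\ZZ$, and your own constants $c_n>1$ show why the question is delicate: on associated graded the image of $\Phi$ is the proper sublattice $\UAss([L,L])\cdot\ZZ[c_1Z_1,c_2Z_2,\ldots]$ of $H_\ast M\xi$, so everything depends on knowing $M\xi_\ast$ on the nose rather than up to finite index. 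The appeal to Theorem~\ref{thm:mxiunivor} does not help here (it gives a map out of $M\xi$, not a description of $\pi_\ast M\xi$ inside $H_\ast M\xi$), and ``a degree-by-degree induction using PBW to control the obstruction'' is a restatement of the claim, not an argument.

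The paper proves Theorem~\ref{thm:mxigens} by a different route, jointly with Theorem~\ref{thm:mxiuniv}: in Lemma~\ref{lem:fgchunks} one classifies formal group law chunks by induction on their length, the obstruction at stage $n$ is a symmetric $2$-cocycle which, because commutators raise polynomial degree, is governed by the classical Lazard comparison lemma and equals $c\cdot C_n(x,y)$ for a unique $c$; the new generator $\hat X_n$ is adjoined freely and Schauenburg's formula (\ref{schauenburg}) forces its commutation relations with the previously built subalgebra, so the representing object for length-$n$ chunks is the free $\Tria_\ast$-module on the monomials in $X_1,\ldots,X_n$ sitting inside $M\xi_\ast$. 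That chunk-by-chunk representability is precisely the input your sketch is missing. If you prefer to keep your Hurewicz/PBW skeleton, the paper-internal statement that could supply the missing upper bound is the splitting Theorem~\ref{thm:splitting} (whose proof does not use Theorem~\ref{thm:mxigens}), which gives $M\xi_\ast\cong H_\ast\TriSpace\otimes MU_\ast$ additively; but you would then still have to trace $\Tria_\ast$ and the classes $X_n$ through that identification, which is exactly the bookkeeping your proposal leaves undone.
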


We emphasize once more that the $X_k$ here are non-central. 
Here are some non-trivial commutators that illustrate this:
we are using $X_1 = a_{1,1}$, $X_2 = a_{1,2}$, $X_3 = -a_{1,3} + a_{2,2}$.
\begin{align*} 
    [X_1,X_2] &= 6 \Upsilon_{1,2} 
    \\
    [X_1,X_3] &= 
    4 \phi_1(\Upsilon_{1,2}) + 4 \Upsilon_{1,3} + 8 \Upsilon_{1,2} X_1
    \\ 
    [X_2,X_3] &=  
    2 \phi_{1,1}(\Upsilon_{1,2})
    -4 \phi_{1}(\Upsilon_{1,3})
    + 6 \Upsilon_{2,3} 
    -3 \phi_1(\Upsilon_{1,2}) X_1 
    \\&+2 \Upsilon_{1,3}X_2 
    -6 \Upsilon_{1,2}X_1
    + \Upsilon_{1,2}X_1^2
\end{align*} 

\end{subsection}

\begin{subsection}{Chern classes in $M\xi$}\label{sec:chern}
We now compute explicitly some Chern classes in $M\xi^\ast BU$
expressed as symmetric functions in $M\xi^\ast[[x_1,x_2,\ldots]]$.
We use the method developed in \cite{MR1398918}.
As explained in section \ref{sec:symm} this first requires 
the computation of the Vieta coordinates $y_k = v_k x_k v_k^{-1}$ where 
\[ 
    v_k = \left| \begin{matrix}x_1^{k-1} & x_2^{k-1} & \cdots & x_k^{k-1} \\ \vdots & & & \vdots \\ 1 & 1 & \cdots & 1 \end{matrix}\right|_{1k}
\] is a certain Vandermonde quasi-determinant. The Chern class $c_k\in M\xi^\ast BU(k)$ is then given by the product $c_k = y_k y_{k-1} \cdots y_1$.

The $y_k$ are power series in $x_1,\ldots,x_k$ that can only be computed approximately (except for $y_1=1$, of course).
We here give some values up to $x$-degree $7$:
\begin{align*}
    y_2 &\equiv x_2 -\Upsilon_{1,2} x_1^{2}x_2^{2} -\Upsilon_{1,3} x_1^{3}x_2^{2} -\Upsilon_{1,3} x_1^{2}x_2^{3} -\Upsilon_{1,4} x_1^{4}x_2^{2}
    \\& - \left(\Upsilon_{1,4} + \Upsilon_{2,3}\right) x_1^{3}x_2^{3} 
     -\Upsilon_{1,4} x_1^{2}x_2^{4} -\Upsilon_{1,5} x_1^{5}x_2^{2} 
     \\& + \left(2 \Upsilon_{1,2}^2 - \Upsilon_{1,5} - \Upsilon_{2,4}\right) x_1^{4}x_2^{3} 
    + \left(6 \Upsilon_{1,2}^2 - \Upsilon_{1,5} - \Upsilon_{2,4}\right) x_1^{3}x_2^{4} -\Upsilon_{1,5} x_1^{2}x_2^{5} 
    \\
    y_3 &\equiv 
    x_3 -\Upsilon_{1,2} x_1^{2}x_3^{2} -\Upsilon_{1,2} x_2^{2}x_3^{2} 
    -\Upsilon_{1,3} \left( x_1^{3}x_3^{2} + x_1^{2}x_3^{3} + x_2^{3}x_3^{2} + x_2^{2}x_3^{3} \right) 
    \\& + \left(-\Upsilon_{1,4} - \Upsilon_{2,3}\right) x_1^{3}x_3^{3} + \left(\phi_{1}(\Upsilon_{1,3}) - \phi_{2}(\Upsilon_{1,2})\right) x_1^{2}x_2^{2}x_3^{2} 
    \\& -\Upsilon_{1,4} \left( x_1^{4}x_3^{2} + x_1^{2}x_3^{4} + x_2^{4}x_3^{2} + x_2^{2}x_3^{4} \right) 
    + \left(-\Upsilon_{1,4} - \Upsilon_{2,3}\right) x_2^{3}x_3^{3} + 4  \Upsilon_{1,2}^2 x_2^{3}x_3^{4} 
    \\& -\Upsilon_{1,5} x_1^{5}x_3^{2} + \left(2 \Upsilon_{1,2}^2 - \Upsilon_{1,5} - \Upsilon_{2,4}\right) x_1^{4}x_3^{3} + \left(\Upsilon_{1,2}^2 + \phi_{1}(\Upsilon_{1,4}) - \phi_{3}(\Upsilon_{1,2})\right) x_1^{3}x_2^{2}x_3^{2} 
    \\& + \left(6 \Upsilon_{1,2}^2 - \Upsilon_{1,5} - \Upsilon_{2,4}\right) x_1^{3}x_3^{4} 
    + \left(\Upsilon_{1,2}^2 + \phi_{1}(\Upsilon_{1,4}) - \phi_{3}(\Upsilon_{1,2})\right) \left( x_1^{2}x_2^{3}x_3^{2} + x_1^{2}x_2^{2}x_3^{3} \right) 
    \\& 
    -\Upsilon_{1,5} \left( x_1^{2}x_3^{5} + x_2^{5}x_3^{2} + x_2^{2}x_3^{5} \right)
    + \left(2 \Upsilon_{1,2}^2 - \Upsilon_{1,5} - \Upsilon_{2,4}\right) \cdot \left( x_2^{4}x_3^{3} + x_2^{3}x_3^{4} \right) 
\end{align*}
This gives the following for $c_2$ and $c_3$:
we use the quasi-symmetric monomials $m_I = \sum_{j_1<\cdots<j_k} x_{j_1}^{i_1}\cdots x_{j_k}^{i_k}$ 
to express these functions more succinctly. The formulas are complete modulo terms of $x$-degree larger than $8$.
\begin{align*}
    c_2 &\equiv 
    m_{11} -\Upsilon_{1,2} m_{23} -\Upsilon_{1,3} \left( m_{33} + m_{24} \right)
    -\Upsilon_{1,4} \left( m_{43} + m_{25} + m_{34} \right) 
    -\Upsilon_{2,3} m_{34} 
    \\& -\Upsilon_{1,5} \left( m_{53} + m_{26} \right) 
    + \left(2 \Upsilon_{1,2}^2 - \Upsilon_{1,5} - \Upsilon_{2,4}\right) \left( m_{44} + m_{35} \right)
    + 4  \Upsilon_{1,2}^2 m_{35}
    \\
    c_3 &\equiv m_{111} 
    -\Upsilon_{1,2} \left( m_{231} + m_{213} + m_{123} \right) 
    -\phi_{1}(\Upsilon_{1,2}) m_{223} 
    \\& 
    -\Upsilon_{1,3} \left( m_{331} + m_{313} + m_{241} + m_{214} + m_{133} + m_{124}\right) 
    \\& -\Upsilon_{1,4} \left( m_{431} 
    + m_{413} 
    + m_{251} 
    + m_{215} 
    + m_{143} 
    + m_{125}
    + m_{341}
    + m_{314}
    + m_{134}
    \right)
    \\& - \Upsilon_{2,3} \left( m_{341} + m_{314} + m_{134} \right)
    -\phi_{2}(\Upsilon_{1,2}) \left( m_{323} + m_{233} \right) 
    -\phi_{1}(\Upsilon_{1,3}) m_{224} 
\end{align*}
\end{subsection}

\end{section}
\begin{section}{The formal group of $M\xi$ is universal}\label{sec:fguniv}
We now want to prove that the formal group of $M\xi$ has the expected universal property:
\begin{thm}\label{thm:mxiuniv}
Let $(R,\Upsilon)$ be a strict triangular $\BFK$ Hopf module algebra and $F=(\Upsilon,\Delta,\chi)$ a 
one-dimensional, commutative formal group law over $R$.
Then there exists a unique map $\phi:M\xi_\ast \rightarrow R$
of triangular $\BFK$ Hopf module algebras such that $\phi_\ast F_{M\xi} = F$.  
\end{thm}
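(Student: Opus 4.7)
The plan is to construct $\phi$ by combining the universality of the triangular part, $\Tria_\ast$ (Theorem \ref{thm:triangular}), with the explicit presentation of $M\xi_\ast$ given by Theorem \ref{thm:mxigens}. For uniqueness, any $\phi$ satisfying the theorem is forced to send each $\Upsilon_{p,q}\in\Tria_\ast$ to $\Upsilon_{p,q}^R\in R$, and each formal group coefficient $a_{i,j}$ of $F_{M\xi}$ to the corresponding coefficient $a_{i,j}^R$ of $F$. Since the generators $X_n=\sum_k\lambda_k^{(n)}a_{k,n-k+1}$ together with $\Tria_\ast$ generate $M\xi_\ast$, and by Theorem \ref{thm:mxigens} the monomials $X_1^{a_1}\cdots X_n^{a_n}$ form a free $\Tria_\ast$-module basis, uniqueness follows.

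For existence, I would first invoke Theorem \ref{thm:triangular} to obtain the unique map $\phi_\Tria\colon\Tria_\ast\to R$ of $\BFK$-Hopf module algebras sending $\Upsilon_{p,q}\mapsto\Upsilon_{p,q}^R$. Next, define $X_n^R:=\sum_k\lambda_k^{(n)}a_{k,n-k+1}^R\in R$ using the formal group coefficients of $F$, and extend $\phi$ left-$\Tria_\ast$-linearly along the basis of Theorem \ref{thm:mxigens} by setting $\phi(\tau\cdot X_1^{a_1}\cdots X_n^{a_n})=\phi_\Tria(\tau)\cdot(X_1^R)^{a_1}\cdots(X_n^R)^{a_n}$. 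This produces a well-defined left $R$-module map from $M\xi_\ast$ to $R$.

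The principal obstacle is verifying that this $\phi$ is multiplicative. This amounts to checking that the structural rules in $M\xi_\ast$---the right-action of $\Tria_\ast$ on the $X_n$'s via the $\BFK$-action, and the commutators $[X_i,X_j]$ expressed in terms of $\Tria_\ast$ and lower $X$'s (as in the explicit formulas displayed after Theorem \ref{thm:mxigens})---are formal consequences of the commutative formal group axioms and the triangular structure relations. Since $(R,F,\Upsilon)$ satisfies the same axioms by hypothesis, every such relation must transport across $\phi$. To make this rigorous I would proceed by induction on the $Z$-degree filtration, using the Hurewicz embedding of Lemma \ref{lem:hurlem} to translate each relation in $M\xi_\ast$ into an identity in $H_\ast M\xi=\ZZ\langle Z_1,Z_2,\ldots\rangle$, where the formal group law is linearized by $x\mapsto T+Z_1T^2+Z_2T^3+\cdots$ and every $a_{i,j}$ and $\Upsilon_{p,q}$ becomes an explicit polynomial in the $Z_k$ (cf.\ figures \ref{figphikz} and \ref{figupsZ}). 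The resulting identities are manifestly formal consequences of the axioms of $F$ and $\Upsilon$, and therefore hold after applying $\phi$.

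Finally, the compatibility $\phi_\ast F_{M\xi}=F$ follows from $\phi(a_{i,j})=a_{i,j}^R$: this holds by construction for the $a_{i,j}$ that appear linearly in the $X_n$, while the remaining $a_{i,j}$ are determined from these together with the $\Upsilon_{p,q}$ via the associativity, commutativity and unitality of $F$, all of which are preserved once multiplicativity is in hand.
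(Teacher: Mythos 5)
There is a genuine gap, and it sits exactly where the real work of any Lazard-type theorem lies. Your construction defines $\phi$ as a left $\Tria_\ast$-linear map on the monomial basis of Theorem \ref{thm:mxigens} and then must check that it respects every relation of $M\xi_\ast$ --- the commutators $[X_i,X_j]$, the values $\phi_k(X_n)$ of the $\BFK$-action, and all relations among the $a_{i,j}$. You dismiss this by asserting that, after passing through the Hurewicz embedding of Lemma \ref{lem:hurlem}, these identities are ``manifestly formal consequences of the axioms of $F$ and $\Upsilon$.'' That assertion is essentially the theorem itself: saying that every relation holding among the $a_{i,j}$ and $\Upsilon_{p,q}$ in $M\xi_\ast$ follows formally from the braided formal-group axioms is equivalent to saying that $M\xi_\ast$ carries the universal such formal group law. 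The Hurewicz embedding cannot supply this: it tells you which identities hold in $H_\ast M\xi=\ZZ\langle Z_1,Z_2,\ldots\rangle$ (i.e.\ in $M\xi_\ast$ itself), but it gives no mechanism for deducing that such an identity must hold after substituting the coefficients of an arbitrary $F$ over an arbitrary $R$. In addition, your argument quotes Theorem \ref{thm:mxigens} as an independent input, whereas in the paper that structure theorem is proved by the very same induction that proves Theorem \ref{thm:mxiuniv} (``This also completes the proofs of theorems \ref{thm:mxiuniv} and \ref{thm:mxigens}''), so as written the argument is circular inside the paper's logical scheme.

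The paper's proof avoids both problems by not presupposing the module structure: it classifies formal group law chunks by induction on their length (Lemma \ref{lem:fgchunks}). Given $\phi_{n-1}$ on $M_{n-1}=\Tria_\univ[X_1,\ldots,X_{n-1}]$, the degree-$n$ deviation $\Gamma(x,y)$ between $(\phi_{n-1})_\ast F_{M\xi}$ and $F$ is shown to be a symmetric $2$-cocycle (using that commutators raise polynomial degree), so Lazard's comparison lemma produces a unique $c\in R$ with $\Gamma=c\cdot C_n(x,y)$; one adjoins a free noncommuting $\hat X_n\mapsto c$ and then uses Schauenburg's formula (\ref{schauenburg}) to show that the commutation relations of $X_n$ with $M_{n-1}$ are expressed purely through $\Delta$ and $\chi$, hence are automatically matched in $R$, so the map descends to $\Tria_\univ[X_1,\ldots,X_n]$. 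Those two devices --- the symmetric-cocycle/comparison-lemma step and the Schauenburg descent step --- are precisely what is missing from your proposal; without them the multiplicativity and well-definedness of your basis-wise extension are unproven.
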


In the classical non-braided case this theorem is due to Lazard \cite{MR77542} and the 
ring $MU_\ast$ that carries the universal formal group law is known as the 
Lazard ring.  While the statement of Lazard's theorem is very elegant
all of its proofs are essentially computational; so far every attempt to find 
a natural, non-computational proof has ended in a mirage \cite{MR2683228}*{section 5.3.2}.
Our proof of the braided generalization in theorem \ref{thm:mxiuniv} is no exception.
    
We will follow the inductive approach to classify formal group law chunks by induction on their length.
Recall that two formal group laws $F$ and $G$ represent the same formal group law chunk of length $n$
if $x+_Fy$ and $x+_Gy$ agree modulo terms of polynomial degree $n+1$. 

\newcommand{\univ}{{\mathrm{univ}}}
\begin{lemma}\label{lem:fgchunks}
    Let $M_n = \Tria_\univ [X_1,\ldots,X_n]$ be the subalgebra of $M\xi_\ast$ generated by the 
    triangular subalgebra $\Tria_\univ$ and the first $n$ generators of theorem \ref{thm:mxigens}.
    Then $M_n$ represents the functor that classifies formal group law chunks of length $\le n$.
\end{lemma}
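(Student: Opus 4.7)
The plan is an induction on $n$, with each step reducing to a braided version of Lazard's classical symmetric 2-cocycle computation. The base case $n=0$ is covered by Theorem \ref{thm:triangular}: a length-$0$ chunk is just the underlying strict triangular $\BFK$-Hopf module algebra, which is represented by $M_0 = \Tria_\univ$.

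For the inductive step, assume that $M_{n-1}$ represents chunks of length $\le n-1$, and let $F$ be a length-$n$ chunk over $(R,\Upsilon)$. Its truncation to length $n-1$ corresponds, by induction, to a unique map $\phi_{n-1}\colon M_{n-1}\to R$ of triangular $\BFK$-Hopf module algebras. I would then show that extensions $\phi_n\colon M_n\to R$ of $\phi_{n-1}$ are in natural bijection with length-$n$ chunks extending the given length-$(n-1)$ chunk. Writing the extension as $F = F_{n-1} + G$ with $G$ homogeneous in $x,y$ of the next polynomial degree (so its coefficients lie in the degree-$n$ part of $R$), the formal group axioms translate to the braided symmetric 2-cocycle conditions
\begin{equation*}
    \Upsilon G = G, \qquad G(x, y+_F z) + G(y,z) = G(x+_F y, z) + G(x,y),
\end{equation*}
taken modulo the appropriate next-degree cutoff. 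The crux is to show that the solution space of these conditions is a free left $R$-module of rank one, with a canonical generator matching the Lazard coefficients $\lambda^{(n)}_k$ appearing in the definition of $X_n$.

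To establish this cyclicity I would filter by powers of the augmentation ideal $\bar\Tria \subset \Tria_\univ$. Modulo $\bar\Tria$ both the braiding and the $\BFK$-action become trivial, so the equations reduce to the classical symmetric 2-cocycle condition over the commutative quotient. Lazard's classical computation (recalled in Lemma \ref{mugens}) then identifies the cocycle space with a single copy of $R/\bar\Tria{\cdot}R$, generated by $\frac{1}{d_n}\bigl((x+y)^{n+1} - x^{n+1} - y^{n+1}\bigr)$ where $d_n = \gcd_k\binom{n+1}{k}$; this is precisely the image of the chunk of $F_{M\xi}$ associated to $X_n$. I would then lift this inductively over the $\bar\Tria$-filtration, showing that at each successive filtration level the corrections are uniquely forced by the previously determined lower-degree data and the triangular/$\BFK$-structure. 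The Hurewicz embedding of Lemma \ref{lem:hurlem} is the key computational tool here, since it realizes $M\xi_\ast$ inside a ring of power series in the \emph{central} variables $T_j$ where a direct comparison with the classical case over $MU_\ast$ is available.

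The hard part will be controlling this lift: confirming that no genuinely new obstructions arise from the non-trivial braiding $\Upsilon$ or from the $\BFK$-action on higher-degree terms, so that the classical Lazard generator truly extends to a canonical generator over $R$. Once freeness is secured, the bijection between choices of $G$ and values $\phi_n(X_n) \in R$ extends $\phi_{n-1}$ uniquely to $\phi_n$, completing the induction.
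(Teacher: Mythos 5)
There is a genuine gap, in two places. First, your central claim---that the space of degree-$n$ symmetric cocycles over $R$ is free of rank one---is deferred to an inductive lift over the $\bar\Tria$-adic filtration which you yourself flag as ``the hard part'' and do not carry out; and as set up it would not go through. Modulo $\bar\Tria\cdot R$ only the braiding $\Upsilon_{p,q}$ dies: the $\BFK$-action of the $\phi_k$ on a general triangular algebra $R$ is independent structure and need not become trivial, so the quotient problem is not the classical commutative one; moreover for an arbitrary $R$ the filtration by powers of $\bar\Tria\cdot R$ need not be separated or exhaustive, so even a successful lift at each stage would not pin down an element of $R$ itself. The idea you are missing is much simpler and makes the filtration unnecessary: since commutators in $R[[x,y]]$ raise polynomial degree by at least one, the degree-$n$ discrepancy $\Gamma$ between the pushed-forward universal chunk and $F$ satisfies the \emph{classical} additive symmetric $2$-cocycle conditions on the nose, so Lazard's comparison lemma (a statement about coefficients in an abelian group, hence insensitive to non-commutativity of $R$) applies directly and gives a unique $c\in R$ with $\Gamma=c\cdot C_n$. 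Relatedly, the Hurewicz embedding is of no help here: the cocycle computation takes place over the arbitrary target $R$, not over $M\xi_\ast$.

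Second, even granting cyclicity, your final step ``the bijection between choices of $G$ and values $\phi_n(X_n)\in R$ extends $\phi_{n-1}$ uniquely'' skips the genuinely braided difficulty: sending $X_n\mapsto c$ a priori only defines a map out of the free extension $M_{n-1}\langle \hat X_n\rangle$, and one must check that it descends to $M_n=\Tria_\univ[X_1,\ldots,X_n]$, i.e.\ that $c$ satisfies in $R$ the same commutation relations (with the image of $M_{n-1}$, and under the $\BFK$-action) that $X_n$ satisfies in $M\xi_\ast$. This is where the paper invokes Schauenburg's formula (\ref{schauenburg}): the braiding $\Upsilon(c\otimes m)$ is expressible through $\Delta$ and $\chi$ alone, so preservation of the formal group law data automatically forces preservation of the commutation relations. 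Without an argument of this kind your $\phi_n$ is not known to be well defined as a map of triangular $\BFK$-Hopf module algebras, which is the whole point of the representability statement.
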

\begin{proof}
    Since we consider the underlying triangular structure to be fixed we are already given a unique 
    $\phi:\Tria_\univ \rightarrow R$ that is independent of the formal group law data $(\Delta,\chi)$.
    We need to show that this $\phi$ can be extended on the additional generators $X_1,\ldots,X_n$.

    We do this by induction on $n$, the case $n=0$ being trivial. 
    Our proof is modeled on the presentation in \cite{MR860042}*{Appendix 2} and
    the references in this proof refer to that chapter. 
    Let $F(x,y) = x+_Fy$ be a given formal group law chunk of length $n$
    and $\phi_{n-1}:M_{n-1}\rightarrow R$ the map that we need to extend.
    As in [A.2.12] we let $\Gamma(x,y)$ be the degree $n$ component 
    of the difference between $G=\left(\phi_{n-1}\right)_\ast F_{M\xi}$ and $F$.
    Using the fact that commutators in $R[[x,y]]$ raise the polynomial degree by at least one
    we find that $\Gamma$ satisfies the usual cocycle conditions (compare [A2.1.28])
    \begin{align*} 
        &\Gamma(x,y) = \Gamma(y,x),\quad \Gamma(x,0) = \Gamma(0,x) = 0,\\
        &\Gamma(x,y) +\Gamma(x+y,z) = \Gamma(x+y,z) + \Gamma(y,z).
    \end{align*}
    By Lazard's comparison lemma [A.2.1.12] there is a unique $c\in R$ such that 
    $\Gamma(x,y) = c\cdot C_n(x,y)$ with 
    $C_n(x,y) = c_n^{-1}\left((x+y)^n - x^n - y^n\right)$
    where $c_n$ is the gcd from theorems \ref{mugens}, \ref{thm:mxigens}.
    It follows that we can adjoin a non-commuting variable $\hat X_n$ to $M_{n-1}$ to get the required extension 
    \[ \hat \phi_n : \Tria_\univ[X_1,\ldots,X_{n-1}] \langle \hat X_n \rangle \rightarrow R \]
    with $\hat X_n \mapsto c$.
    It remains to show that this descends through the projection 
    \[ \Tria_\univ[X_1,\ldots,X_{n-1}] \langle \hat X_n \rangle \rightarrow \Tria_\univ[X_1,\ldots,X_n]. \]
    This requires that the commutation relations between $X_n$ and an element $m\in M_{n-1}$ are the same as 
    those between $c$ and $\phi_{n-1}(m)$.  But this follows from Schauenburg's formula (\ref{schauenburg}) because 
    it expresses the commutator $\Upsilon(c\otimes m)$ through the formal group law data $\Delta$ and $\chi$.
\end{proof}

This also completes the proofs of theorems \ref{thm:mxiuniv} and \ref{thm:mxigens}.

\end{section}
\begin{section}{A splitting of $M\xi$}\label{sec:splitting}
    We now want to use the section $\sigma = \sigma^{MU} : MU \rightarrow M\xi$ to show that $M\xi$ 
    splits (non-multiplicatively) into a sum of suspended copies of $MU$.
    Let $\TriSpace$ denote the homotopy fiber of $\Omega\Sigma \CP^\infty \rightarrow BU$.
    We will show below that this represents the universal triangular structure which explains the name.
    We have a map 
    \[\begin{tikzcd}
        \TriSpace_+ \land MU \ar[r,"\mathrm{incl}"] &  \Omega\Sigma \CC P^\infty_+ \land MU \ar[r, "\cong"] & 
        M\xi\land MU \ar[r, "\id\land\sigma"] & M\xi\land M\xi \ar[r, "\mathrm{mult}"] & M\xi
    \end{tikzcd}\]
    \begin{thm}\label{thm:splitting}
        This map is an equivalence. 
    \end{thm}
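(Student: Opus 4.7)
The plan is to show that the map is a weak equivalence by verifying that it induces an isomorphism on integral homology. Both source and target are bounded-below spectra of finite type, so a homology isomorphism will imply a weak equivalence by a standard Whitehead-type argument. Since $H_\ast \TriSpace$ will turn out to be a free abelian group of finite type, one will then automatically get the splitting of $M\xi$ into suspended copies of $MU$ advertised in the section introduction.

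The first step is to compute $H_\ast \TriSpace$ from the fibration $\TriSpace \to \Omega\Sigma\CP^\infty \to BU$. Recall $H_\ast \Omega\Sigma\CP^\infty = \ZZ\langle Z_1, Z_2, \ldots\rangle$ and $H_\ast BU = \ZZ[\beta_1, \beta_2, \ldots]$, with the fibration map sending $Z_k$ to $\beta_k$ modulo commutators. Viewing the tensor algebra as the universal enveloping algebra $U(L)$ of the free Lie algebra $L$ on the $Z_k$, the Poincar\'e--Birkhoff--Witt theorem combined with the direct-sum decomposition $L = [L,L] \oplus \bigoplus_k \ZZ\cdot Z_k$ yields a right-module splitting
\[ \ZZ\langle Z_1, Z_2, \ldots\rangle \;\cong\; U([L,L]) \otimes \ZZ[\beta_1, \beta_2, \ldots]. \]
Running the Serre spectral sequence (which is concentrated in even total degree since $H_\ast BU$ is) one concludes $H_\ast \TriSpace \cong U([L,L])$. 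Recalling from the end of the proof of Theorem~\ref{thm:triangular} that the associated graded $\overline\Tria \subset H_\ast M\xi$ is precisely $U([L,L])$, this also shows that $\TriSpace$ indeed represents the universal triangular structure, explaining its name.

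The second step is to combine with algebra: by Theorem~\ref{thm:mxigens}, $M\xi_\ast$ is a free $\Tria_\ast$-module on monomials in $X_1, X_2, \ldots$, and the $X_n$ are built from the same Lazard recipe of Lemma~\ref{mugens} that furnishes polynomial generators of $MU_\ast$. Hence $\sigma = \sigma^{MU}: MU \to M\xi$ carries the $MU_\ast$-generators to the $X_n$, and multiplication in $M\xi$ yields an isomorphism of graded abelian groups
\[ \Tria_\ast \otimes MU_\ast \;\xrightarrow{\;\cong\;}\; M\xi_\ast, \qquad a \otimes b \longmapsto a \cdot \sigma_\ast(b). \]

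The main obstacle is to verify that the composite
\[ \TriSpace_+ \wedge MU \to \Omega\Sigma\CP^\infty_+ \wedge MU \cong M\xi \wedge MU \xrightarrow{\id \wedge \sigma} M\xi \wedge M\xi \xrightarrow{\mu} M\xi \]
of the theorem induces on $H_\ast$ the map $H_\ast\TriSpace \otimes H_\ast MU \to H_\ast M\xi$ sending $a \otimes b \mapsto \iota_\ast(a) \cdot \sigma_\ast(b)$, where $\iota_\ast$ identifies $H_\ast \TriSpace = U([L,L])$ with $\overline\Tria \subset H_\ast M\xi$. This requires unravelling the Thom-isomorphism identification $\Omega\Sigma\CP^\infty_+ \wedge MU \cong M\xi \wedge MU$ and invoking $\tau\sigma = \id$. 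Once this matching is established, the PBW-type decomposition from step one together with the algebraic isomorphism of step two show that the map of the theorem is a homology equivalence, completing the proof.
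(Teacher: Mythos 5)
Your overall strategy --- reduce to an isomorphism in integral (co)homology and exploit the PBW decomposition coming from $L = V \oplus [L,L]$ --- is the same as the paper's, but there is a genuine gap at exactly the point where the real work lies. You assert that the fiber inclusion identifies $H_\ast\TriSpace \cong \UAss([L,L])$ with $\overline\Tria$ \emph{as a subalgebra} of $H_\ast M\xi = \NSym$, and that the splitting then follows from PBW together with \enquote{unravelling the Thom isomorphism}. What the fibration actually gives (via the Eilenberg--Moore argument, dualized) is $H_\ast\TriSpace \cong \NSym\cotensor_\Sym\ZZ$, and this cotensor product is \emph{not} the subalgebra $\UAss([L,L])\subset\NSym$: for example $[Z_1,Z_3]$ does not lie in it, since $(\id\otimes\pi)\Delta[Z_1,Z_3] = [Z_1,Z_3]\otimes 1 + [Z_1,Z_2]\otimes\pi(Z_1) \neq [Z_1,Z_3]\otimes 1$. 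The cotensor product only contains suitable deformations of the iterated commutators, agreeing with them modulo higher $I$-adic filtration, so the identification with $\UAss([L,L])$ holds only on the associated graded level. Consequently your closing sentence (\enquote{the PBW-type decomposition from step one together with the algebraic isomorphism of step two show that the map is a homology equivalence}) is precisely the statement that still has to be proved: that multiplication $\widetilde{\Sym}\otimes\left(\NSym\cotensor_\Sym\ZZ\right)\rightarrow\NSym$ is an isomorphism. In the paper this is Lemma~\ref{lem:reorder}, whose proof requires constructing, \emph{inside} the cotensor product, elements $A_{i,j}\equiv[Z_i,Z_j]$ mod $I^3$ and operators $\Theta_j(a)\equiv[Z_j,a]$ mod $I^{k+2}$ (obtained from $B=x_2x_1x_2^{-1}x_1^{-1}$ and $x_1ax_1^{-1}$ in the Hurewicz-style presentation $x_k = T_k + Z_1T_k^2+\cdots$), followed by a filtration argument. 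Your proposal supplies no substitute for this step.

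Two smaller points. The Serre spectral sequence claim is circular as written: its $E_2$-page involves $H_\ast\TriSpace$, whose evenness/freeness is part of what you are trying to establish; the paper avoids this by using the Eilenberg--Moore spectral sequence together with the freeness of $\QSym$ over $\Sym$. And your step two concerns homotopy groups ($\Tria_\ast\otimes MU_\ast\cong M\xi_\ast$ from Theorem~\ref{thm:mxigens}); it does not feed into the homology computation, and in your final paragraph it is silently replaced by its (unproved) homology-level analogue, which is again the content of Lemma~\ref{lem:reorder}.
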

    To prove this we will show that the map induces an equivalence in integral cohomology.
    \begin{lemma}
        One has $H^\ast \TriSpace = \QSym \otimes_{\Sym} \ZZ = \QSym /\!/ \Sym$.
    \end{lemma}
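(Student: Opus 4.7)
The plan is to compute $H^\ast \TriSpace$ via the Eilenberg--Moore spectral sequence applied to the pullback square
\[
\begin{tikzcd}
\TriSpace \ar[r] \ar[d] & \Omega\Sigma\CP^\infty \ar[d] \\
\mathrm{pt} \ar[r] & BU
\end{tikzcd}
\]
that exhibits $\TriSpace$ as the homotopy fiber of $\Omega\Sigma\CP^\infty \to BU$. Since $BU$ is simply-connected and everything in sight is of finite type in each degree, the Eilenberg--Moore spectral sequence converges strongly to $H^\ast \TriSpace$ with
\[ E_2^{-p,q} = \Tor^{p,q}_{H^\ast BU}\bigl(\ZZ,\, H^\ast \Omega\Sigma\CP^\infty\bigr). \]

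By the identifications established in the preceding lemma one has $H^\ast BU = \Sym$ and $H^\ast \Omega\Sigma\CP^\infty = \QSym$, and the induced action of $\Sym$ on $\QSym$ is the canonical inclusion $\Sym \hookrightarrow \QSym$. The desired conclusion $H^\ast \TriSpace = \QSym \otimes_\Sym \ZZ$ is therefore equivalent to showing that the $E_2$-page is concentrated in the column $p=0$, i.e.~that $\Tor^{i}_\Sym(\ZZ,\QSym)=0$ for all $i>0$. This reduces the topological statement to the purely algebraic assertion that $\QSym$ is flat (indeed free) as a module over $\Sym$.

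I would then invoke the theorem of Garsia and Wallach (extended to the infinite variable setting) that $\QSym$ is a free $\Sym$-module. Given freeness, all higher $\Tor$'s vanish, the spectral sequence collapses at $E_2$, and we land on $H^\ast \TriSpace = \ZZ \otimes_\Sym \QSym = \QSym /\!/ \Sym$ as claimed.

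The main obstacle is of course the freeness of $\QSym$ over $\Sym$: this is a nontrivial combinatorial theorem and the cleanest citation is Garsia--Wallach. A minor secondary concern is the convergence of the Eilenberg--Moore spectral sequence, which is standard for a simply-connected base of finite type but worth a sentence of justification. If one prefers to avoid appealing to the Garsia--Wallach result, an alternative would be to use the James splitting $\Sigma \Omega\Sigma\CP^\infty \simeq \bigvee_{n\ge 1} \Sigma (\CP^\infty)^{\wedge n}$ together with a direct analysis of the map to $BU$ in order to build an explicit free $\Sym$-resolution of $\QSym$; this would be more self-contained but also significantly more laborious.
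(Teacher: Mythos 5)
Your proposal follows essentially the same route as the paper: compute $H^\ast \TriSpace$ via the Eilenberg--Moore spectral sequence for the fiber square over $BU$, identify the $E_2$-page as $\Tor_{\Sym}(\QSym,\ZZ)$, and collapse it using the fact that $\QSym$ is free over $\Sym$ (which the paper asserts without citation and you justify via Garsia--Wallach). Your added remarks on convergence and the explicit freeness reference are fine but do not change the argument.
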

    \begin{proof}
        One can use the Eilenberg-Moore spectral sequence
        to compute the cohomology of $\TriSpace$ as the homotopy pullback of  
        \[ \begin{tikzcd} & \Omega\Sigma\CP^\infty \ar[d] \\ \{\mathrm{pt}\}  \ar[r] & BU \end{tikzcd} \]
        Its $E_2$ page is $\Tor_{H^\ast BU}\left(H^\ast \Omega\Sigma\CP^\infty, \ZZ\right)$.
        Since $H^\ast \Omega\Sigma\CP^\infty = \QSym$ is a free module over the subalgebra $H^\ast BU = \Sym$
        this degenerates to the tensor product $\QSym \otimes_{\Sym} \ZZ$ and the spectral sequence collapses.
    \end{proof}

    It follows that $H_\ast \TriSpace$ can be described dually as the cotensor product 
    \[ \NSym \cotensor_{\Sym} \ZZ = \left\{ n\in \NSym \,:\, (\id\otimes\pi) \Delta n = n\otimes 1 \right\} \]
    where $\NSym = H_\ast \Omega\Sigma\CP^\infty = \ZZ\langle Z_1,Z_2,\ldots\rangle$ 
    is the ring of non-commutative symmetric functions and $\pi : \NSym \rightarrow \Sym$ is the 
    usual projection. 
    Let $\widetilde{\Sym} \subset \NSym$ be spanned by the monomials $Z_{i_1}^{a_1}\cdots Z_{i_n}^{a_n}$ with 
    $i_1\le \cdots \le i_n$.
    Clearly the composite $\widetilde{\Sym} \rightarrow \NSym \rightarrow \Sym$ is a bijection.
    \begin{lemma}\label{lem:reorder}
        The multiplication map $\widetilde{\Sym} \otimes \left(\NSym \cotensor_{\Sym} \ZZ \right) \rightarrow \NSym$
        is an isomorphism.
    \end{lemma}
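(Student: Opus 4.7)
The plan is to prove the lemma by combining surjectivity of $\mu$ (via a Hopf-algebraic straightening argument) with an equality of Poincar\'e series inherited from the preceding lemma.

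First, I would verify that $B := \NSym\cotensor_\Sym\ZZ$ is a subalgebra of $\NSym$: the right coaction $\rho = (\id\otimes\pi)\Delta$ is an algebra map, so $\rho(b_1 b_2) = (b_1\otimes 1)(b_2\otimes 1) = b_1 b_2\otimes 1$ for $b_1, b_2\in B$; applying $\epsilon\otimes\id$ to $\rho(b)=b\otimes 1$ also shows $\pi|_B = \epsilon|_B$, so $B_+ \subseteq \ker\pi$ and $\mu$ is well-defined.

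For surjectivity I would use a Hall-basis style straightening, inductive on the length filtration $F_\ast$ of Lemma \ref{lem:liefilt}. Working modulo $F_{k+1}$, one constructs ``corrected commutators'' $\hat c_{a,b}\in B$ that reduce to $[Z_a, Z_b]$ modulo higher length, and similarly iterated corrected commutators; the identity $Z_a Z_b = Z_b Z_a + [Z_a, Z_b] \equiv Z_b Z_a + \hat c_{a,b}\pmod{F_{k+1}}$ then allows a bubble-sort of any monomial into non-decreasing form modulo higher-length terms, and lifting inductively in the length filtration expresses every element of $\NSym$ in the image of $\mu$.

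Finally, I would conclude via a Poincar\'e-series comparison. The preceding Lemma relied on the classical freeness of $\QSym$ as a $\Sym$-module to establish the collapse of the Eilenberg-Moore (equivalently Serre) spectral sequence for $\TriSpace\to\Omega\Sigma\CP^\infty\to BU$, giving $P(\QSym;q) = P(\Sym;q)\cdot P(\QSym/\!/\Sym;q)$. Dualizing yields $P(\NSym;q) = P(\widetilde{\Sym};q)\cdot P(B;q)$, so the surjection $\mu$ between finite-rank free graded $\ZZ$-modules of equal graded rank must be an isomorphism. The main obstacle is the straightening step: ``naive'' commutators $[Z_a, Z_b]$ are generally not in $B$ (e.g.\ a direct calculation gives $\rho[Z_1, Z_3] - [Z_1, Z_3]\otimes 1 = [Z_1, Z_2]\otimes h_1 \ne 0$), so one must use elements of $B$ that reduce to $[Z_a, Z_b]$ only on the associated graded, and correspondingly organize the straightening via the length filtration rather than on the nose.
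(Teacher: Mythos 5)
Your overall architecture is reasonable, and your endgame is a legitimate alternative to the paper's: where the paper identifies the associated graded of $B=\NSym\cotensor_{\Sym}\ZZ$ with $\UAss([L,L])$ and concludes via a double application of Poincar\'e--Birkhoff--Witt, you close instead with a Poincar\'e-series count, $P(\NSym)=P(\widetilde{\Sym})\cdot P(B)$, dual to the freeness of $\QSym$ over $\Sym$ used in the preceding lemma; a degree-preserving surjection between free graded $\ZZ$-modules of equal finite ranks is indeed an isomorphism. Your diagnosis that naive commutators do not lie in $B$ is also correct (your computation of $(\id\otimes\pi)\Delta[Z_1,Z_3]$ checks out).

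The genuine gap is exactly at the step you call the main obstacle: you assert, but never construct, the corrected commutators $\hat c_{a,b}\in B$ with $\hat c_{a,b}\equiv[Z_a,Z_b]$ modulo higher length, and likewise the corrections needed when a commutator produced by one transposition must be pushed past the remaining letters of a monomial (for $a\in B\cap I^k$ one needs elements of $B$ congruent to $[Z_j,a]$ modulo two further filtration steps). Nothing you cite supplies these: Lemma \ref{lem:liefilt} gives leading terms of $\phi_l$ and $\Upsilon_{p,q}$ in $H_\ast M\xi$, but it does not show that any element with those leading terms satisfies the cotensor condition $(\id\otimes\pi)\Delta n=n\otimes 1$, which is the whole difficulty. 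Nor can the rank count absorb the problem: over $\ZZ$, equality of graded ranks without surjectivity proves nothing (think of multiplication by $2$), so your proof stands or falls with the straightening step. The paper's proof consists essentially of producing these elements: writing $x_k=T_k+Z_1T_k^2+Z_2T_k^3+\cdots$ with the $T_k$ central, one has $\Delta x_j=T_j^{-1}x_j\otimes x_j$, and a short computation shows that the coefficients of $x_2x_1x_2^{-1}x_1^{-1}$ and of $x_1ax_1^{-1}$ (for $a\in B$) lie in $B$ and have the required leading terms $[Z_n,Z_m]$, resp.\ $[Z_j,a]$, modulo higher filtration. Some such construction, or another existence argument for these corrected representatives, must be added before your surjectivity argument, and hence your proof, is complete.
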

    \begin{proof}
        We use the filtration of $\NSym=\ZZ\langle Z_1,Z_2,\ldots\rangle$ by powers of the augmentation ideal $I$.
        Let $V=\ZZ\{Z_1,Z_2,\ldots\}$ be the free module on the generators $Z_k$. One can think of $\NSym$ as the universal 
        enveloping algebra $\UAss(L)$ of the free Lie algebra $L\subset \NSym$ generated by $V$. 
        Since $L= V \oplus [L,L]$ one has (using the Poincaré-Birkhoff-Witt theorem twice)
        \[ 
            \NSym = \UAss(L) \cong \Sym(V \oplus [L,L]) \cong \widetilde{\Sym} \cdot \Sym([L,L]) = \widetilde{\Sym} \cdot \UAss([L,L])
        \] 
        The claim will follow if we show that the associated graded of $\NSym \cotensor_{\Sym} \ZZ$ with respect to the $I$-adic filtration 
        can be identified with $\UAss([L,L])$.
        For this we need to show that 
        \begin{enumerate}
            \item there are $A_{i,j} \in \NSym \cotensor_{\Sym} \ZZ$ for all $i,j$ with 
        $A_{i,j} = [Z_i,Z_j]$ mod $I^3$ and that 
            \item to every $a\in \left(\NSym \cotensor_{\Sym} \ZZ\right) \cap I^k$ 
        there are $\Theta_j(a)\in \NSym \cotensor_{\Sym} \ZZ$ with $\Theta_j(a) - [Z_j,a] \in I^{k+2}$. 
        \end{enumerate}
        As in Lemma \ref{lem:hurlem}
        assume variables $x_1,x_2$ with a representation through central variables $T_k$
        \[ x_k = T_k + Z_1 T_k^2 + Z_2 T_k^3 + \cdots. \] 
        The coproduct $\Delta Z_n = \sum_{p+q=n} Z_p \otimes Z_q$ has $\Delta x_j = T_j^{-1} x_j \otimes x_j$.
        Write $x_2x_1 = B\cdot x_1x_2$ with $B=\sum_{i,j} B_{i,j}T_1^iT_2^j$.
        Then 
        \begin{align*} 
            (\id\otimes\pi)\Delta(x_2x_1) 
            &= (\id\otimes\pi)\Delta(B x_1x_2) \\ 
            & = (\id\otimes\pi)\left( \Delta B \right) \cdot (\id\otimes\pi)\Delta(x_1x_2) \\
            &= T_1^{-1} T_2^{-1} (\id\otimes\pi)\left( \Delta B \right) \cdot (x_1x_2 \otimes \pi(x_1x_2) ) \\
        \intertext{and also}
            (\id\otimes\pi)\Delta(x_2x_1) 
            &= T_1^{-1} T_2^{-1} x_2x_1 \otimes \pi(x_2x_1) \\ 
            &= T_1^{-1} T_2^{-1} (Bx_1x_2) \otimes \pi(x_1x_2) \\
            &= T_1^{-1} T_2^{-1} (B\otimes 1) \cdot \left(x_1x_2\otimes \pi(x_1x_2)\right).
        \end{align*} 
        Cancelling the common factors $T_1^{-1}T_2^{-1} \left(x_1x_2\otimes \pi(x_1x_2)\right)$ 
        gives $(\id\otimes\pi)\Delta B = (B\otimes 1)$, i.e.~$B_{i,j}\in \NSym \cotensor_{\Sym} \ZZ$
        for all $i,j$.        
        Now $x_k^{-1} = \sum_{j\ge 0} \chi(Z_j) T^{j-1}$ where $\chi(Z_j) \equiv -Z_j$ mod $I^2$ is the 
        antipode corresponding to the coproduct $\Delta$. 
        From $B=x_2x_1x_2^{-1}x_1^{-1}$ one then finds that modulo $I^2$ the coefficient $B_{n,m}$ is given by 
        \begin{align*} 
            \sum_{\substack{i+k=n\\j+l=m}} Z_iZ_j\chi(Z_k)\chi(Z_l) 
            &\equiv Z_nZ_m + Z_n\chi(Z_m) + Z_m\chi(Z_n) + \chi(Z_n)\chi(Z_m) 
            \\ &\equiv [Z_n,Z_m] \bmod I^3 
        \end{align*} 
        as required in (1).

        For (2) we assume $a\in I^k$ with $(\id\otimes\pi)\Delta a = a\otimes 1$ and find similarly 
        that $x_1ax_1^{-1} \in \NSym \cotensor_{\Sym} \ZZ$.
        One has 
        \[ x_1ax_1^{-1} = \sum_n \left( \sum_{i+j=n} Z_ia\chi(Z_j) \right) T^n \equiv 
        \sum_n \left( Z_n a - a Z_n  \right) T^n \mod I^{k+1}. \]
    \end{proof}

    \begin{proof}[Proof of Theorem \ref{thm:splitting}]
        The previous lemma showed that the composite
        \[ \begin{tikzcd}[column sep=1.7cm] \Sym \otimes \left( \NSym \cotensor_{\Sym} \ZZ \right) 
            \ar[r, "\mathrm{repr} \otimes \mathrm{incl}"] & \NSym \otimes \NSym \ar[r, "\mathrm{mult}"] & \NSym 
        \end{tikzcd} \] 
        is an isomorphism where $\mathrm{repr}$ is a section to the abelianization map $\NSym \rightarrow \Sym$. 
        Dualizing shows that the composite 
        \[ \begin{tikzcd}[column sep=1.7cm] \Sym \otimes \left(\QSym /\!/ \Sym\right) & \QSym \otimes \QSym \ar[l, swap, "\sigma^\ast \otimes \mathrm{proj}" ]
            & \QSym \ar[l, swap, "\mathrm{coproduct}"]
        \end{tikzcd} \] 
        is an isomorphism as well.
        But this is easily seen to coincide with the map from $H^\ast M\xi$
        to $H^\ast MU \otimes H^\ast \TriSpace$ under consideration.
    \end{proof}

\end{section}
\begin{section}{Operations and cooperations of $M\xi$}
We now focus our attention on the problem of understanding the algebras of operations $M\xi^\ast M\xi$ and 
of cooperations $M\xi_\ast \opposite{M\xi}$.  
Conventional wisdom has it that the latter should be understood as a kind of Hopf algebroid (see \cite{MR860042}*{A1.1.1}).
This is somewhat complicated by the fact that these objects are both non-commutative and non-cocommutative. 
They thus belong to a class of Hopf algebras that first properly came into focus 
in the work of the 1980s on quantum groups.

For an algebra $A$ let $\Mod_A$ denote the category of left $A$-modules. If $A$ is provided with a coproduct 
$A\rightarrow A\otimes A$ this turns $\Mod_A$ into a monoidal category 
because the tensor product $M\otimes N$ of two $A$-modules then carries a natural $A$ action via 
$a(m\otimes n) = \sum (a'm) \otimes (a''m)$.

In general there will be no natural way to identify $M\otimes N$ and $N\otimes M$ as $A$-modules, though,  
since the naive identification $m\otimes n \leftrightarrow n\otimes m$ is not $A$-linear.
Such an identification therefore, if it exists, represents additional structure on $A$.
Recall from \cite{MR869575}*{section 10} that a Hopf algebra $A$ is 
called \emph{quasi-triangular} if such a natural braiding 
$\begin{tikzcd}[column sep=small]\Upsilon\colon M\otimes N \ar[r,"\simeq"] & N\otimes M\end{tikzcd}$ is provided.  
Furthermore $A$ is called \emph{triangular} if that braiding is actually symmetric, i.e. if $\Upsilon^2 = \id$. 

If $A$ arises as the algebra of operations of a non-commutative ring spectrum $E$ 
we morally think of an $A$-module $M\in\Mod_A$ 
as arising as the $E$-cohomology $M=E^\ast(X)$ of a space $X$. 
We therefore naturally expect a symmetric braiding 
$E^\ast X \otimes E^\ast Y \cong E^\ast Y \otimes E^\ast X$
related to the topologically induced flip $E^\ast (X\times Y) \cong E^\ast (Y\times X)$. 
In other words we are canonically inclined to think of $E^\ast E$ as a triangular Hopf algebra.   
That this works for $M\xi$ is something we will establish in section \ref{sec:triangular} below.

Understanding $M\xi_\ast \opposite{M\xi}$ is less straightforward: for a ring spectrum $E$ there is no reason to think 
that the augmentation $\epsilon\colon E_\ast\opposite{E} \rightarrow E_\ast$ 
or the coproduct $\Delta \colon E_\ast\opposite{E} \rightarrow E_\ast\opposite{E} \otimes_{E_\ast} E_\ast\opposite{E}$
should be multiplicative maps. Indeed, with $\eta_L:E\cong E\land S^0 \rightarrow E\land E$ and 
$\eta_R\colon E \cong S^0\land E \rightarrow E\land E$ one has 
$\epsilon(\eta_L(x) y) = x \epsilon(y)$ and $\epsilon(\eta_R(x)y) = \epsilon(y) x$ which shows that 
parts of $\epsilon$ behave multiplicatively, others anti-multiplicatively. 

We will see below that for $E=M\xi$ the cooperations can be decomposed as 
$M\xi_\ast\opposite{M\xi} = M\xi_\ast \otimes \BFK = \BFK \otimes M\xi_\ast$ where $\BFK$ is the Brouder-Frabetti-Krattenthaler Hopf algebra as above
and the two decompositions differ about using $\eta_L$ or $\eta_R$ for the embedding of $M\xi_\ast$. 
We will show that this embeds $\BFK$ into $M\xi_\ast \opposite{M\xi}$ as a Hopf algebra, 
i.e.~in a way that is compatible with the coproducts. 
This allows to carry out computations with $M\xi_\ast\opposite{M\xi}$ even though a good theoretical framework 
for this object seems not to be available.
We also get dually an embedding of the dual Hopf algebra $\BFK^\ast$ into the operation algebra $M\xi^\ast M\xi$.

An intriguing consequence of the non-commutativity of $M\xi$ is that the cooperation algebra can be realized 
as a subalgebra of the operation algebra: the map 
\begin{equation}\label{eq:phi} 
     \Phi \colon M\xi_\ast \opposite{M\xi} \rightarrow M\xi^\ast M\xi 
\end{equation}  
with \enquote{$a\land b \mapsto (x \mapsto axb)$} induces this embedding.  
This shows that in $M\xi^\ast M\xi$ there are copies of both $\BFK$ and of its dual $\BFK^\ast$. 
We show that these can be combined to define a map of Hopf algebras 
\begin{equation}\label{eq:drinfeld}
    D(\BFK) = \BFK \otimes \BFK^\circ \rightarrow M\xi^\ast M\xi
\end{equation}
where $D(\BFK)$ is Drinfeld's quantum double of $\BFK$.
That something like this might be true had been conjectured in \cite{morava2020renormalization}.

\begin{subsection}{The embedding $\Phi\colon M\xi_\ast \opposite{M\xi} \rightarrow M\xi^\ast M\xi$}
    As described above let $\Phi(z)$ for $z\in M\xi_\ast \opposite{M\xi}$ be given as the composite 
    \[ \begin{tikzcd} M \cong S^0 \land M \ar[r, "z\land\id"] & M\land M\land M 
        \ar[r,"\id\land\mathrm{flip}"] & M\land M\land M \ar[r, "\mu"] & M.
    \end{tikzcd} \]
    Here $\mu$ denotes the multiplication on $M\xi$.
    We will write $\Phi_z$ for $\Phi(z)$ where this is convenient.

    The following lemma is one reason why we use the opposite multiplication in the second factor of 
    $M\xi_\ast\opposite{M\xi}$:
    \begin{lemma} $\Phi$ is multiplicative: $\Phi(zw) = \Phi(z) \Phi(w)$.
    \end{lemma}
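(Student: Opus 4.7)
The plan is to reduce to a diagrammatic chase. First, I would unpack the definition of $\Phi$ on a decomposable element. For $z=a\wedge b\in M\xi_\ast\opposite{M\xi}$ the composite in the definition of $\Phi(z)$ reads as the operation $x\mapsto a\cdot x\cdot b$ (with $\mu$ the iterated smash product multiplication); the only subtlety is that the second tensor factor sits on the right of $x$ after the middle flip, which is exactly why we use $\opposite{M\xi}$ for the second factor.

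Next, I would compute both sides on decomposable inputs. For $z=a\wedge b$ and $w=c\wedge d$ the product in $M\xi_\ast\opposite{M\xi}$ is
\[
zw=(a\wedge b)(c\wedge d)=(ac)\wedge(db),
\]
since the multiplication in the second slot is reversed. Therefore
\[
\Phi(zw)(x)=(ac)\cdot x\cdot(db),
\]
whereas
\[
\bigl(\Phi(z)\Phi(w)\bigr)(x)=\Phi(z)\bigl(\Phi(w)(x)\bigr)=\Phi(z)(c\cdot x\cdot d)=a\cdot(c\cdot x\cdot d)\cdot b.
\]
By associativity of $\mu$ these agree, which is the whole content of the lemma.

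To upgrade this element-level verification to a proof in the homotopy category, I would draw the corresponding diagram of smash products: $\Phi(z)\circ\Phi(w)$ is built from two copies of the defining composite glued along $M\xi$, and after reshuffling identities and the two middle flips, the resulting pentagon is precisely the diagram that defines $\Phi$ applied to the composite
\[
S^0\xrightarrow{z\wedge w}M\xi\wedge M\xi\wedge M\xi\wedge M\xi\xrightarrow{\id\wedge\mu_{\mathrm{op}}\wedge\id}M\xi\wedge M\xi
\]
(where $\mu_{\mathrm{op}}$ multiplies the second and fourth factors in reverse order, i.e.\ yields $db$). Commutativity of the pentagon rests solely on associativity of $\mu$ and naturality of the symmetry isomorphism of the smash product.

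The main obstacle, and the thing to be careful about, is tracking which factors get flipped past which during the composition $\Phi(z)\circ\Phi(w)$: there are four $M\xi$-factors in the middle stage, and one must confirm that the two middle flips combine to put $b$ to the far right, as required to obtain $acxdb$ rather than, say, $acxbd$. Once this bookkeeping is done the result is automatic from associativity and from the definition of the opposite multiplication on the second tensor factor.
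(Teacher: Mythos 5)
Your argument is correct and is essentially the paper's own proof: the paper likewise writes $z=a\land b$, $w=c\land d$ formally, notes $zw=ac\land db$ because of the opposite multiplication in the second factor, and concludes $\Phi_{zw}(x)=(ac)x(db)=a(cxd)b=\Phi_z(\Phi_w(x))$ by associativity. Your additional remarks on verifying the smash-product bookkeeping in the homotopy category are a reasonable elaboration but do not change the route.
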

    \begin{proof}
        With \enquote{$z=a\land b$} and \enquote{$w=c\land d$} one gets 
        \enquote{$zw = ac\land db$} and
        \[
            \Phi_{zw}(x) = (ac)x(db) = a(cxd)b = \Phi_z\left(cxd\right) = \Phi_z\left(\Phi_w\left(x\right)\right)
        \]
    \end{proof}

    Let $x\colon{}\Sigma^2 \CC P^\infty \rightarrow M\xi$ be the complex orientation.
    We have
    \[(M\xi\land M\xi)^\ast \CC P^\infty = M\xi_\ast\opposite{M\xi} [[x^L]] = M\xi_\ast\opposite{M\xi} [[x^R]]\]
    where $x^L = x\land 1$, $x^R = 1\land x$.
    As in \cite{MR655040} we define $b_i\in M\xi_\ast\opposite{M\xi}$ via
    $x^L = \sum_{j\ge 0} b_j (x^R)^{j+1}$.
    One has $M\xi_\ast \opposite{M\xi} = M\xi_\ast\langle b_1,b_2,\ldots\rangle$ and $b_0=1$.

    On $M\xi_\ast$ we now have operators $\Phi_z$ in addition to the $\phi_k$ from the natural $\BFK$-action.
    Let $\Phi_i = \Phi(b_i)$.
    The next lemma shows that we have in fact $\Phi_i = \phi_i$.
    \begin{lemma}
        For $a\in M\xi^\ast$ one has $xa = \sum_{i\ge 0}\Phi_i(a)x^{i+1}$.
    \end{lemma}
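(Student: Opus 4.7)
The plan is to introduce an auxiliary (non-multiplicative) spectrum map $\Phi^a \colon M\xi \land M\xi \to M\xi$ associated to $a \in M\xi^\ast$, defined by
\[ \Phi^a \colon M\xi \land M\xi \cong M\xi \land S^0 \land M\xi \xrightarrow{\id \land a \land \id} M\xi^{\land 3} \xrightarrow{\mu} M\xi, \]
which informally sends $b \land c \mapsto bac$. By inspection, $\Phi^a(z) = \Phi_z(a)$ for every $z \in \pi_\ast(M\xi \land M\xi)$; in particular, applied to $b_j \in M\xi_\ast \opposite{M\xi}$ this gives $(\Phi^a)_\ast(b_j) = \Phi_j(a) \in M\xi^\ast$.

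The key property of $\Phi^a$ is that, while it is not a ring map, it is right $M\xi$-linear with respect to the right $M\xi$-action on $M\xi \land M\xi$ given by $\eta_R$: since $(b \land c) \cdot \eta_R(d) = b \land cd$ one directly computes $\Phi^a((b \land c) \cdot \eta_R(d)) = bacd = \Phi^a(b \land c) \cdot d$. Equally immediately, $\Phi^a \circ \eta_L(b) = ba$, so pre-composition with $\eta_L$ is just right multiplication by $a$ on $M\xi$.

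The proof then applies $(\Phi^a)_\ast \colon (M\xi \land M\xi)^\ast \CP^\infty \to M\xi^\ast \CP^\infty$ to the defining identity $x^L = \sum_{j \ge 0} b_j (x^R)^{j+1}$. On the left, $(\Phi^a)_\ast(x^L) = (\Phi^a \circ \eta_L) \circ x = xa$. For the right, since $\eta_R$ is a ring map one has $(x^R)^{j+1} = \eta_R(x^{j+1})$, and right $\eta_R$-linearity gives
\[ (\Phi^a)_\ast\bigl(b_j \cdot \eta_R(x^{j+1})\bigr) = (\Phi^a)_\ast(b_j) \cdot x^{j+1} = \Phi_j(a)\, x^{j+1}. \]
Summing over $j$ yields the claim. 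The main obstacle — and the reason one cannot simply use the ring map $\mu_\ast$, which annihilates every $b_j$ with $j \ge 1$ — is the non-multiplicativity of $\Phi^a$; this is precisely why one needs the weaker one-sided $\eta_R$-linearity, matched to the one-sided decomposition $x^L = \sum b_j (x^R)^{j+1}$.
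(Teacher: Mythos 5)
Your proof is correct and is essentially the paper's argument: both evaluate the defining relation $x^L=\sum_j b_j\,(x^R)^{j+1}$ under the operation \enquote{insert $a$ in the middle and multiply}, which the paper writes as left multiplication by $1\land a$ followed by $\mu_\ast$ (using a formal decomposition $b_j=\sum b_j'\land b_j''$) and you package as the single map $\Phi^a=\mu\circ(\id\land a\land\id)$ together with its right $\eta_R$-linearity. Your packaging is a slightly cleaner way of saying the same thing, since it avoids the formal splitting of $b_j$, but the underlying computation is identical.
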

    \begin{proof}
        Decompose $b_j$ formally as $b_j = \sum b_j'\land b_j''$. Then
        \begin{align*}
            xa &= \mu (x\land a) = \mu (1\land a) (x\land 1) = \mu (1\land a) x^L
            \\ &= \sum_j \mu (1\land a) b_j (x^R)^{j+1}
            \\ &= \sum_j \mu (1\land a) (b_j'\land b_j'') (1\land x^{j+1})
            \\ &= \sum_j \mu\left( b_j' \land a b_j'' x^{j+1} \right)
            \\ &= \sum_j b_j'ab_j'' x^{j+1} = \sum_j \Phi_{b_j}(a) x^{j+1}.
        \end{align*}
    \end{proof}

    We will show that $\Phi$ is nicely compatible with the coproduct
    $M\xi_\ast \opposite{M\xi} \rightarrow M\xi_\ast \opposite{M\xi} \otimes M\xi_\ast \opposite{M\xi}$.
    Using the known composition law of the $\Phi_i\in\BFK$ then lets us compute the $\Delta(b_k)$.

Now recall that the algebra of operations also carries a coproduct 
\[ \delta \colon M\xi^\ast M\xi \rightarrow M\xi^\ast\left(M\xi\land M\xi\right) \cong M\xi^\ast M\xi \hat\otimes M\xi^\ast M\xi. \]
For $\psi\in M\xi^\ast M\xi$ its coproduct $\delta \psi = \sum \psi' \hat\otimes \psi''$ is characterized by 
the multiplication rule $\psi(xy) = \sum \psi'(x) \psi''(y)$.
\begin{lemma}\label{phicoproduct}
    Let $Z\in M\xi_\ast\opposite{M\xi}$ with $\Delta Z = \sum Z'\otimes Z''$.
    Then
    \begin{equation}
        \Phi_Z(xy) = \sum \Phi_{Z'}(x) \Phi_{Z''}(y).
    \end{equation}
\end{lemma}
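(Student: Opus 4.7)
The plan is to reduce the statement to a diagrammatic identity between two maps $M\xi\land M\xi\to M\xi$ and verify it by a straightforward chase using associativity of the multiplication on $M\xi$. No deep input is required beyond the definitions of $\Phi$, of $\Delta$, and of the product on $M\xi$-cohomology.

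First I would reduce to a universal case. Both sides of the claimed identity are bilinear in $x, y$ and natural in $X$, so it suffices to check the case $X = M\xi\land M\xi$ with $x, y$ the two coordinate projections. Then $xy$ is literally the multiplication $\mu\colon M\xi\land M\xi\to M\xi$, and the identity becomes an equality of maps
\[
\Phi_Z\circ\mu = \mu\circ\widetilde{\Delta Z}
\]
out of $M\xi\land M\xi$, where $\widetilde{\Delta Z}\colon M\xi\land M\xi\to M\xi\land M\xi$ denotes the map associated to $\Delta Z\in M\xi_\ast\opposite{M\xi}\otimes_{M\xi_\ast}M\xi_\ast\opposite{M\xi}$ via the construction $\sum(\Phi_{Z'}\land\Phi_{Z''})$.

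Next I would unfold the definitions. The coproduct $\Delta$ is by construction induced by the unit-insertion map $(\id\land\eta\land\id)\circ Z\colon S\to M\xi^{\land 3}$, transported across the Künneth isomorphism $\pi_\ast(M\xi^{\land 3})\cong M\xi_\ast\opposite{M\xi}\otimes_{M\xi_\ast} M\xi_\ast\opposite{M\xi}$. Applying the definition $\Phi_W = \mu^{(3)}\circ(\id\land\mathrm{flip})\circ(W\land\id)$ and using associativity of $\mu$ to break and recombine multiplications, both sides unfold to the composite $M\xi\land M\xi\to M\xi$ that on formal elements sends $(x, y)$ to $z_1\cdot x\cdot y\cdot z_2$, where $(z_1, z_2)$ stands for $Z$. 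More explicitly, the left-hand side gives $z_1\cdot(xy)\cdot z_2$, while the right-hand side gives $(z_1 x)\cdot(y z_2)$, and the two agree by associativity and unitality of $\mu$ (the unitality collapsing the inserted $\eta$ back out again).

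The main obstacle is purely bookkeeping: tracking several smash factors, flips, and unit insertions through the unfolded diagram. There is no conceptual content beyond associativity of $\mu$; the informal formula $\Phi_Z(xy) = a(xy)b = (ax)(yb) = \Phi_{Z'}(x)\Phi_{Z''}(y)$ with $\Delta(a\land b) = (a\land 1)\otimes(1\land b)$ already captures the full argument. Reframed, the lemma says precisely that $\Phi$ is a coalgebra map from $M\xi_\ast\opposite{M\xi}$ to $M\xi^\ast M\xi$, using the coproduct $\delta$ on $M\xi^\ast M\xi$ recalled just before the lemma.
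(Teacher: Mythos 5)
Your argument is correct and is essentially the paper's own: the paper proves the lemma by the commutative diagram relating $\Phi\otimes\Phi$, the three-variable extension $\Phi^{(3)}$ and $\mu^\ast$, which is exactly your reduction to the universal case $\Phi_Z\circ\mu=\mu\circ\sum(\Phi_{Z'}\land\Phi_{Z''})$, together with the observation that $\Delta$ is unit-insertion $\id\land 1\land\id$ read through the K\"unneth isomorphism. Your formal-element chase $z_1(xy)z_2=(z_1x)(yz_2)$ is the same associativity/unitality content (and is literally how the paper proves the parallel comultiplicativity statement in the Drinfeld-double section), so there is nothing genuinely different to flag beyond noting, as the paper does, that the K\"unneth identification of $M\xi^\ast(M\xi\land M\xi)$ requires the completed tensor product.
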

In more suggestive terms the lemma says that we can set $(\Phi_Z)' = \Phi_{Z'}$ and $(\Phi_Z)'' = \Phi_{Z''}$ here, 
so the map $\Phi$ is compatible with coproducts.
\begin{proof}
    Write $M=M\xi$.
    The map $\Phi$ has an extension to several variables
    \[\Phi^{(k)} :
    \pi_\ast \big(\underbrace{M\land \cdots \land M}_{\text{$k$ factors}}\big) \rightarrow
    M^\ast\big(\underbrace{M\land \cdots \land M}_{\text{$(k-1)$ factors}}\big)\]
    where
    \[\Phi^{(k)}(a_1\land\cdots\land a_k)(z_1\land \cdots \land z_{k-1}) =
    a_1z_1a_2z_2a_3\cdots a_{k-1}z_{k-1}a_k\]
    The multiplicative properties of $\Phi$ are elucidated by the commutative diagram
    \[\begin{tikzcd}
        {M_\ast\opposite{M} \otimes M_\ast\opposite{M}} && {M^\ast M \otimes M^\ast M} \\
        {\pi_\ast \left(M\land M \land M\right)} && {M^\ast(M\land M)} \\
        {M_\ast\opposite{M}} && {M^\ast M}
        \arrow["\Phi", from=3-1, to=3-3]
        \arrow["{\Phi^{(3)}}", from=2-1, to=2-3]
        \arrow["\Phi\otimes\Phi", from=1-1, to=1-3]
        \arrow["\alpha"', from=1-1, to=2-1]
        \arrow["\beta", from=1-3, to=2-3]
        \arrow["{\id\land 1 \land\id}", from=3-1, to=2-1]
        \arrow["{\mu^\ast}"', from=3-3, to=2-3]
    \end{tikzcd}\]
    Here \enquote{$\alpha((a\land b) \otimes (c\land d)) = a\land bc\land d$}.
    The map $\alpha$ is an isomorphism whereas $\beta$ becomes an isomorphism after completion of the source.
    The claim now follows since the  left vertical composite defines the diagonal
    \[\Delta \colon{} M_\ast\opposite{M} \rightarrow M_\ast\opposite{M} \otimes_{M_\ast} M_\ast\opposite{M}.\]
\end{proof}

\begin{corollary}\label{mxicoproduct}
    The coproduct 
    \[\Delta \colon{} M_\ast\opposite{M} \rightarrow M_\ast\opposite{M} \otimes_{M_\ast} M_\ast\opposite{M}\]
    restricts to the Brouder-Frabetti-Krattenthaler 
    diagonal $\Delta:\BFK\rightarrow \BFK\otimes\BFK$ on $\BFK \subset M\xi_\ast\otimes \BFK = M\xi_\ast\opposite{M\xi}$.
\end{corollary}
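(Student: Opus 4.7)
My plan is to reduce the statement to Lemma~\ref{phicoproduct} combined with the fact that the $\BFK$ coproduct of Lemma~\ref{lem:first} is characterized by exactly the same product rule as the operation coproduct on the $\phi_k$'s.

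Apply Lemma~\ref{phicoproduct} with $Z=b_n$: writing $\Delta b_n=\sum b_n'\otimes b_n''$, one obtains $\phi_n(xy)=\sum \Phi_{b_n'}(x)\,\Phi_{b_n''}(y)$, i.e.~$(\Phi\otimes\Phi)(\Delta b_n)$ equals the operation coproduct $\delta\phi_n$ in $M\xi^\ast M\xi\,\hat\otimes\, M\xi^\ast M\xi$. But $\delta\phi_n=\sum\phi_n^{(1)}\otimes\phi_n^{(2)}$ is characterized uniquely by the product rule $\phi_n(xy)=\sum\phi_n^{(1)}(x)\phi_n^{(2)}(y)$, and that is precisely the identity that Lemma~\ref{lem:first} used to construct $\Delta^{\BFK}\phi_n$, since that coproduct was designed to make $S$, and hence every $\BFK$-module algebra such as $M\xi_\ast$, into a comodule algebra. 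Therefore $(\Phi\otimes\Phi)(\Delta b_n)=(\Phi\otimes\Phi)(\Delta^{\BFK}b_n)$ after the identification $\phi_i\leftrightarrow b_i$.

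To deduce $\Delta b_n=\Delta^{\BFK}b_n$ from this agreement of images one needs an injectivity statement together with the a priori containment $\Delta b_n\in\BFK\otimes\BFK$. Injectivity of $\Phi|_{\BFK}$ is immediate: the free generators $b_k$ are sent to distinct operations $\phi_k$ that can be told apart by the coefficient of $x^{1+k}$ in $x\cdot r\in M\xi^\ast\CP^\infty$, and the tensor version follows at once. The main obstacle is the containment, which I would attack by a direct computation in $(M\xi^{\land 3})^\ast\CP^\infty$: the three orientations satisfy $x^L=\sum_i(b_i\otimes 1)(x^M)^{1+i}$, $x^M=\sum_j(1\otimes b_j)(x^R)^{1+j}$ and $x^L=\sum_k\Delta(b_k)(x^R)^{1+k}$, so substituting the first two into the third and sliding $x^R$ past each $1\otimes b_j$ via the $\BFK$ action on the rightmost tensor factor mirrors the coassociativity calculation in the proof of Lemma~\ref{lem:first} line by line. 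This forces $\Delta(b_k)$ to be expressible in the $b_i$'s alone and to match the BFK coproduct on the nose.
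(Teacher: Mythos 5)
Your first paragraph is precisely the paper's own argument: apply Lemma~\ref{phicoproduct} to $z=b_n$, note that the $\BFK$-comultiplication furnishes a second product rule for the same operation $\phi_n$, and conclude by uniqueness of the operation coproduct $\delta\phi_n$. The paper stops there, reading ``there cannot be two different product rules'' as settling the matter, implicitly leaning on its standing assertion that (\ref{eq:phi}) is an embedding. Your instinct that an injectivity statement is needed to pass from $(\Phi\otimes\Phi)(\Delta b_n)=(\Phi\otimes\Phi)(\Delta^{\BFK}b_n)$ back to $\Delta b_n=\Delta^{\BFK}b_n$ is therefore sound, but both of your supplementary steps have gaps. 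Injectivity of $\Phi|_{\BFK}$ is not ``immediate'': the generators $b_k$ mapping to pairwise distinct operations $\phi_k$ says nothing about linear independence of the images of all monomials $b_{i_1}\cdots b_{i_k}$ (which, $\Phi$ being multiplicative, go to the composites $\phi_{i_1}\cdots\phi_{i_k}$), and ``the tensor version follows at once'' is not automatic either, since the relevant tensor product is taken over the non-commutative ring $M\xi_\ast$ and the target is a completed tensor product. What the argument really needs is injectivity of the composite $M\xi_\ast\opposite{M\xi}\otimes_{M\xi_\ast}M\xi_\ast\opposite{M\xi}\xrightarrow{\ \alpha\ }\pi_\ast\left(M\xi\land M\xi\land M\xi\right)\xrightarrow{\ \Phi^{(3)}\ }M\xi^\ast\left(M\xi\land M\xi\right)$ from the proof of Lemma~\ref{phicoproduct}; granting that (the natural strengthening of the embedding claim for $\Phi$), your separate containment step is superfluous, because equality with $\Delta^{\BFK}b_n$, which already lies in $\BFK\otimes\BFK$, yields the containment for free.

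The proposed direct computation of the containment is, as written, not a proof. To rewrite $x^L=\sum_i(b_i\otimes 1)\bigl(\sum_j(1\otimes b_j)(x^R)^{1+j}\bigr)^{1+i}$ as a left power series in $x^R$ you must commute $x^R$ past the coefficients $1\otimes b_j$, and that commutation is governed by the $\BFK$-action attached to the orientation $x^R$ of $M\xi\land M\xi\land M\xi$ evaluated on the classes $b_j$ --- an input you have not computed; in particular you have not shown that this action keeps you inside the subring generated by the $b_i$, which is essentially the containment you are trying to establish. This is not a ``line by line'' mirror of Lemma~\ref{lem:first}: there the coaction lands in $\BFK\otimes S$ with its ordinary tensor-product algebra structure, so the formal variable commutes with the coefficients $\phi_k$ and no sliding occurs at all. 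So either establish the injectivity statement properly (after which your first paragraph suffices, exactly as in the paper), or carry out the three-orientation computation in full, which first requires determining the $x^R$-commutation rules for the $b_j$ --- a genuinely nontrivial extra step.
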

\begin{proof}
    The lemma shows that for $z\in\BFK$ the coproduct $\Delta z$, computed in $M\xi_\ast\opposite{M\xi}$,
    gives the product rule for $\Phi_z(ab) = \sum \Phi_{z'}(a)\Phi_{z''}(b)$.
    But we already know such a product rule from the comultiplication in $\BFK$. 
    Since there cannot be two different product rules here, the coproducts have to coincide.
\end{proof}

As a side result we see that the coproduct $\Delta$ on $M\xi_\ast\opposite{M\xi}$
is a multiplicative map when restricted to either $M\xi_\ast$ or $\BFK$. 
Of course, this does not mean that it would be multiplicative on all of $M\xi_\ast\opposite{M\xi} = M\xi_\ast \cdot \BFK$
because there is a non-trivial commutation rule between $M\xi_\ast$ and $\BFK$.
\end{subsection}

\begin{subsection}{The triangular structure on $M\xi^\ast M\xi$}\label{sec:triangular}
    We now spell out in which sense the operation algebra $M\xi^\ast M\xi$ can be thought of as a 
    triangular Hopf algebra.  As indicated earlier this is a rather natural assumption for  
    the operation algebra $E^\ast E$ of any ring spectrum $E$, assuming a suitable Künneth isomorphism 
    $E^\ast \left(E\land E\right) \cong E^\ast E \hat\otimes_{E^\ast} E^\ast E$. 
    This isomorphism allows to define the diagonal 
    \[ \begin{tikzcd} E^\ast E \ar[r, "\mu^\ast"] 
        \ar[rr, "\delta", bend right=30] & E^\ast(E\land E) \ar[r, "\cong"] & E^\ast E \hat\otimes_{E^\ast} E^\ast E.
    \end{tikzcd} \]
    It also allows to express the opposite multiplication 
    $\mu T \in E^\ast(E\land E)$ as an element $S=\sum b_i\hat\otimes a_i$ of $E^\ast E \hat\otimes_{E^\ast} E^\ast E$.
    We let $R=\sum a_i\hat\otimes b_i$. 
    One has $\mu = \mu T T = \mu\sum (b_i\otimes a_i)T = \mu T \sum(a_i\otimes b_i) = \mu RS$, so $R$ is just the inverse $R=S^{-1}$.
    \begin{lemma}
        This $R$ satisfies the defining relations of the \enquote{universal $R$-matrix} of a triangular structure on $E^\ast E$
        (see \cite{MR869575}*{section 10} or \cite{MR2654259}*{ch.~XI 2.1}).
        \begin{enumerate}
            \item $\opposite{\delta}(x) = R \delta(x) R^{-1}$
            \item $(\delta\otimes\id) R = R_{13}R_{23}$
            \item $(\id\otimes\delta) R = R_{13}R_{12}$
            \item $R_{12}R_{21} = \id$
        \end{enumerate}
    \end{lemma}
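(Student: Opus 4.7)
The plan is to verify each of the four relations by translating it into an identity among maps of spectra built from $E$, $\mu$ and the smash-product swap $T$, using the Künneth isomorphism $E^\ast(E^{\wedge n}) \cong (E^\ast E)^{\hat\otimes n}$ at every arity. Throughout I will use that an element $\sum \alpha_i \hat\otimes \beta_i \in E^\ast E \hat\otimes E^\ast E$ corresponds to the composite
\[ E\land E \xrightarrow{\sum \alpha_i\land \beta_i} E\land E \xrightarrow{\mu} E, \]
so that in particular the unit for the convolution product is $\mu$ itself, while $S = \mu T$ represents the opposite multiplication.

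Axiom (4) is essentially immediate from the setup. By construction $R_{21}$ is obtained from $R = \sum a_i \hat\otimes b_i$ by swapping the two tensor factors, hence $R_{21} = \sum b_i \hat\otimes a_i = S$. The excerpt already derives $R = S^{-1}$ from the tautology $\mu = \mu T T$, so $R_{12} R_{21} = R\cdot S = \id$.

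For axiom (1), I unpack $\delta(x) \in (E^\ast E)^{\hat\otimes 2}$ as the class of $x\circ\mu\colon E\land E\to E$, and $\delta^{\mathrm{op}}(x) = T^\ast\delta(x)$ as the class of $x\circ\mu\circ T$. Substituting the Künneth expression for $R$ and $R^{-1}=S$, the target equation $\delta^{\mathrm{op}}(x)\cdot R = R\cdot\delta(x)$ in $(E^\ast E)^{\hat\otimes 2}$ translates into an identity of maps $E\land E \to E$ asserting that two ways of composing $x$, $\mu$ and $T$ agree; this then reduces to the definition $\mu T = \mu\circ S$ combined with $RS=\id$. Here one has to be careful that the product in the Künneth tensor is computed through $\mu$ and that $x$ is inserted via the left factor, since both conventions matter for the conjugation formula.

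For axioms (2) and (3), I pass to $E^\ast(E^{\wedge 3}) \cong (E^\ast E)^{\hat\otimes 3}$ and interpret $(\delta\otimes\id)R$ and $(\id\otimes\delta)R$ as the two ways of splitting the first, respectively the second, tensorand of $R$ through $\mu$. Topologically, $(\delta\otimes\id)R$ encodes the swap that sends the pair of factors $(1,2)$ past factor $3$; this factors as first swapping factor $1$ past factor $3$ and then factor $2$ past factor $3$, giving $R_{13}R_{23}$. Axiom (3) is the mirror statement, obtained by replacing the left $\mu$-splitting by a right one. These are the topological incarnations of the hexagon identities, strictly satisfied because the smash-product twist $T$ is itself strictly symmetric. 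The main obstacle is keeping the book-keeping straight: each side of (2) and (3) lives in a triple Künneth tensor, and one must verify that the algebraic products $R_{ij}R_{kl}$ correspond under Künneth to the intended composite of two strand-crossings in $E^{\wedge 3}$, which amounts to checking that the Künneth isomorphism is natural with respect to the two multiplications $\mu(\mu\land\id)$ and $\mu(\id\land\mu)$ on the triple smash and coherently intertwines the coproduct $\delta$ of operations with the associativity of $\mu$.
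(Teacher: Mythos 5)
You have the right general strategy --- the paper's own proof also works by pushing everything through the K\"unneth isomorphism $\sum\alpha_i\hat\otimes\beta_i\mapsto \mu\circ(\sum\alpha_i\land\beta_i)$ and checking identities of maps of spectra, and your argument for (4) ($R_{21}=S$ together with $R=S^{-1}$) is exactly the derivation given just before the lemma. But your verification of (1) rests on the identification \enquote{$\opposite{\delta}(x)=T^\ast\delta(x)$, the class of $x\circ\mu\circ T$}, and this is false precisely because $E$ is not homotopy commutative. Writing $\delta(x)=\sum x'\hat\otimes x''$, the flip of the tensor factors (which is what $\opposite{\delta}$ means in the axioms) corresponds under K\"unneth to $\mu\circ(\sum x''\land x')$, whereas $T^\ast\delta(x)=x\circ\mu\circ T=\mu\circ(\sum x'\land x'')\circ T=(\mu T)\circ(\sum x''\land x')$; the two differ by replacing $\mu$ with $\mu T$, i.e.\ by a factor of $S$. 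The correct identities are $T^\ast\delta(x)=\delta(x)\cdot S$ and $T^\ast\delta(x)=S\cdot\opposite{\delta}(x)$, and axiom (1) is exactly the assertion that these two expressions agree; that is what the paper's two-line computation checks, by showing that both $\delta(x)S$ and $S\opposite{\delta}(x)$ map to $x\mu T$. If instead you declare $\opposite{\delta}(x)$ to be the class of $x\mu T$ and push $\opposite{\delta}(x)R=R\delta(x)$ through K\"unneth, then (using $T\circ(\sum a_i\land b_i)=(\sum b_i\land a_i)\circ T$ and $\mu\circ(\sum b_i\land a_i)=\mu T$) the left-hand side collapses to $x\mu$, and you would be \enquote{proving} $R\delta(x)=\delta(x)$, i.e.\ that conjugation by $R$ is trivial, which is false. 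So the point you defer as a convention to \enquote{be careful about} is not bookkeeping; it is the entire content of (1), and the announced reduction \enquote{to $\mu T=\mu\circ S$ combined with $RS=\id$} does not go through as written.

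For (2) and (3) the paper leaves the verification to the reader, and your plan --- pass to $E^\ast(E\land E\land E)$ and decompose the swap of the block $(1,2)$ past the third factor into two elementary swaps --- is the right picture; but it is only announced, not carried out, and executing it requires the same corrected dictionary as above (for instance, $(\delta\otimes\id)R$ is the pullback of the class of $R$ along $\mu\land\id$, and the intermediate comparisons again produce $\mu T$ rather than $\mu$). As it stands, only (4) is actually proved in your proposal, and (1) needs to be redone along the lines $\delta(x)S=S\opposite{\delta}(x)$.
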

    Here one uses the $R_{i,j}\in E^\ast E \hat\otimes_{E^\ast} E^\ast E\hat\otimes_{E^\ast} E^\ast E$ given by
    \[ 
        R_{12} = \sum a_i\otimes b_i\otimes 1, \, 
        R_{13} = \sum a_i\otimes 1\otimes b_i, \, 
        R_{23} = \sum 1\otimes a_i\otimes b_i, \, 
        R_{21} = \sum b_i\otimes a_i\otimes 1.
    \] 
    \begin{proof}
        We will explain how to derive $\opposite{\delta}(x) = R \delta(x) R^{-1}$
        and leave the other claims as an exercise to the reader.

        We need to show $\delta(x) S = S\opposite{\delta}(x)$ for any $x\in E^\ast E$.
        The Künneth isomorphism makes this equivalent to $\mu\delta(x) S = \mu S \opposite{\delta}(x)$.
        The verification is then straightforward:
        \begin{align*}
            \mu\delta(x) S &= x\mu S = x\mu T, \\
            \mu S \opposite{\delta}(x) &= \mu T \opposite{\delta}(x) = \mu \delta(x) T = x \mu T.
        \end{align*} 
    \end{proof}

    As noted in \cite{MR869575}*{section 10, pt.~5)} the conditions (2) and (3) can be interpreted as statements 
    about the map 
    \begin{equation}\label{eq:drinphi}
        E_\ast E \rightarrow E^\ast E, \quad z \mapsto \sum \langle z,a_i\rangle \cdot b_i.
    \end{equation}
    Here $\langle z,a_i\rangle = \epsilon({a_i}_\ast(z)) = \mu(\id\land a_i)z \in E_\ast$ denotes the Kronecker pairing
    between $E_\ast E$ and $E^\ast E$.
    The conditions (2) and (3) then express that this map is suitably multiplicative and (anti-)comultiplicative.
    \begin{lemma}
        The map (\ref{eq:drinphi}) coincides with the map $\Phi:E_\ast E \rightarrow E^\ast E$.
    \end{lemma}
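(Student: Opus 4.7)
The plan is to rewrite $\Phi_z$ by straightforward diagram manipulation, using associativity of $\mu$ together with the defining property of the universal $R$-matrix — namely that $R = \sum a_i \otimes b_i$ represents $\mu T$ under the Künneth isomorphism $E^\ast E \hat\otimes_{E^\ast} E^\ast E \cong E^\ast(E\land E)$ — to match both expressions as elements of $E^\ast E$.

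First I would factor the triple product as $\mu_3 = \mu \circ (\id \land \mu)$ and group $(\id \land \mu) \circ (\id \land T) = \id \land (\mu T)$, so that the defining composite becomes $\Phi_z = \mu \circ (\id \land \mu T) \circ (z \land \id_E)$. Substituting $\mu T = \sum_i \mu \circ (a_i \land b_i)$ from the Künneth description of $R$, I would then expand the inner smash so that
\[ \Phi_z = \sum_i \mu \circ (\id \land \mu) \circ (\id \land a_i \land b_i) \circ (z \land \id_E). \]

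Next I would reassociate in the opposite direction, $\mu \circ (\id \land \mu) = \mu \circ (\mu \land \id)$, and regroup the $z$ and $a_i$ into a single factor $\mu \circ (\id \land a_i) \circ z \colon S^0 \to E$. This factor is by definition $\langle z, a_i\rangle \in E_\ast$, so the expression collapses to
\[ \Phi_z = \sum_i \mu \circ \bigl( \langle z, a_i\rangle \land b_i \bigr) = \sum_i \langle z, a_i\rangle \cdot b_i, \]
which is exactly the map (\ref{eq:drinphi}) applied to $z$.

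The only real subtlety is tracking which factor in the iterated smash product each map acts on and ensuring the Künneth substitution is interpreted in the completed tensor product; once this bookkeeping is done, no genuine obstacle remains. The argument is a formal consequence of associativity of $\mu$ and the defining property of $R$.
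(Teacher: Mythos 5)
Your proposal is correct and is essentially the paper's own argument run in the opposite direction: both rest on the Künneth identification of $\mu T$ with $\sum_i \mu(a_i\land b_i)$, the formula $\langle z,a_i\rangle = \mu(\id\land a_i)z$, and associativity of $\mu$, the only cosmetic difference being that the paper formally decomposes $z=\sum_j r_j\land s_j$ and evaluates on a class $x$ instead of manipulating the defining composite of $\Phi_z$ directly. (One small caveat: the paper literally declares that $S=\sum b_i\hat\otimes a_i$, not $R=\sum a_i\hat\otimes b_i$, represents $\mu T$, but its own proof of this lemma uses the same reading you do, so this is a shared convention wrinkle rather than a gap in your argument.)
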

    \begin{proof} 
        Let $z\in E_\ast E$  
        be formally given as $z = \sum_j r_j\land s_j$. Then 
        \begin{align*} 
            \sum \langle z,a_i\rangle \cdot b_i
            &= \sum \mu\left(\left(\mu(\id\land a_i)z\right)\land b_i\right) 
            \\ &= \sum \mu_3\left(\id\land a_i\land b_i\right)\left(r_j\land s_j\land\id\right) 
            \\ &= \sum \mu_3\left(\id\land T\right) \left(r_j\land s_j\land\id\right)
            \\ &= \sum \mu_3\left(r_j\land \id\land s_j\right)\left(\id\land T\right)
        \end{align*}
        so for $x\in E^\ast X$ for some $X$ one has 
        \[ \sum \langle z,a_i\rangle \cdot b_i(x) = \sum r_jxs_j = \Phi_z(x). \] 
    \end{proof}
\end{subsection}

\begin{subsection}{The Drinfeld double of $\BFK$}
    Let $\BFK^\circ$ be the dual Hopf algebra $\BFK^\ast$, but
    with the flipped comultiplication and inverted antipode.
    Recall from \cite{MR2654259}*{III 2.4}
    that the Drinfeld double $D(\BFK)$ is a Hopf algebra that is characterized by
    \begin{enumerate}
        \item One has $D(\BFK) = \BFK \otimes \BFK^\circ$ as a coalgebra.
        \item The inclusions $\BFK\to D(\BFK)$ and $\BFK^\circ\to D(\BFK)$ are Hopf algebra homomorphisms.
        \item The universal $R$-matrix $R\in D(\BFK)\otimes D(\BFK)$ is the image
        of $\sum_i x_i \otimes x^i$ for a pair of dual bases
        $(x_i)\subset \BFK$ and $(x^j)\subset \BFK^\circ$.
    \end{enumerate}

    Our goal in this section is to establish a representation $D(\BFK)\rightarrow M\xi^\ast M\xi$.
    This amounts to the construction of two Hopf algebra homomorphisms 
    $\BFK\rightarrow M\xi^\ast M\xi$ and $\BFK^\circ \rightarrow M\xi^\ast M\xi$ with a suitable 
    commutation relation between the two.

    We already have $\BFK\rightarrow M\xi^\ast M\xi$ in place  as the composite of $\Phi$ with the inclusion 
    $\BFK \subset M\xi_\ast\opposite{M\xi}$. We have shown that the inclusion is a Hopf algebra map 
    and that $\Phi$ is multiplicative.  It remains to show that $\Phi$ is comultiplicative, too.
    \begin{lemma}
        The map $\Phi \colon M\xi_\ast\opposite{M\xi} \rightarrow M\xi^\ast M\xi$ is comultiplicative.
    \end{lemma}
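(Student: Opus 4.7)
The plan is to observe that this statement is essentially a reformulation of Lemma~\ref{phicoproduct} and requires almost no additional work. Recall that the coproduct $\delta$ on $M\xi^\ast M\xi$ was characterized by the universal property that for $\psi\in M\xi^\ast M\xi$ one has $\delta\psi = \sum \psi'\hat\otimes \psi''$ if and only if $\psi(xy) = \sum \psi'(x)\,\psi''(y)$ for all composable $x$, $y$. This is a defining property, not something to be derived.

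I would then apply this characterization to $\psi = \Phi_Z$ for an arbitrary $Z\in M\xi_\ast\opposite{M\xi}$ with coproduct $\Delta Z = \sum Z'\otimes Z''$ (in the Hopf algebroid sense on the cooperation algebra). Lemma~\ref{phicoproduct} is precisely the assertion
\[ \Phi_Z(xy) = \sum \Phi_{Z'}(x)\,\Phi_{Z''}(y), \]
so the characterizing property of $\delta$ immediately forces
\[ \delta(\Phi_Z) = \sum \Phi_{Z'} \hat\otimes \Phi_{Z''} = (\Phi\otimes\Phi)(\Delta Z). \]
Since $Z$ was arbitrary, this yields $\delta \circ \Phi = (\Phi\otimes\Phi)\circ \Delta$, which is exactly the comultiplicativity of $\Phi$.

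The only point deserving attention is that the two Künneth identifications involved match up: the one used to define $\delta$ via $\mu^\ast : M\xi^\ast M\xi \to M\xi^\ast(M\xi\land M\xi) \cong M\xi^\ast M\xi \hat\otimes M\xi^\ast M\xi$, and the one implicit in the $\Phi^{(3)}$ construction and the square built around $\alpha$ and $\beta$ in the proof of Lemma~\ref{phicoproduct}. This compatibility is a routine naturality check, since both identifications arise from the same smash product decomposition. I do not anticipate any real obstacle: the genuinely substantive work, namely the identification $\alpha$ of $\pi_\ast(M\land M\land M)$ with $M_\ast\opposite{M}\otimes M_\ast\opposite{M}$ that makes the Hopf algebroid coproduct $\Delta$ visible, has already been carried out. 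The present lemma is essentially its dualization.
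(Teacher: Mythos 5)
Your proposal is correct and matches the paper's argument in substance: the paper's own proof of this lemma just redoes, in two lines, the formal computation $\Phi_Z(xy)=\sum e_kxyf_k=\sum a_ixb_i\,c_jyd_j=\sum\Phi_{Z'}(x)\Phi_{Z''}(y)$ that underlies Lemma~\ref{phicoproduct}, and then the characterization of $\delta$ via the multiplication rule gives comultiplicativity exactly as you say. Citing Lemma~\ref{phicoproduct} (together with the Künneth identification that makes $\delta\Phi_Z$ unique) instead of repeating the calculation is a legitimate and essentially identical route; indeed the remark following that lemma already states this reading.
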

    \begin{proof}
    Let $Z\in M\xi_\ast \opposite{M\xi}$ and assume
    formally
    \begin{align*}
        Z &= \sum e_k\land f_k, & \Delta(Z) &= \sum Z' \otimes Z'', \\
        Z' &= \sum a_i\land b_i, & Z'' &= \sum c_j\land d_j.
    \end{align*}
    Then
    \[\sum_{i,j} a_i\land b_ic_j \land d_j = \sum_k e_k\land 1\land f_k\]
    so
    \[\Phi_Z(xy) = \sum_k e_kxyf_k = \sum_{i,j} a_ixb_i c_jyd_j = \sum \Phi_{Z'}(x)\Phi_{Z''}(y).\]
    \end{proof}

    We next define $\Psi : \BFK^\circ \to M\xi^\ast M\xi$ using the isomorphism
    \[\begin{tikzcd}M\xi^\ast M\xi \ar[r, "\cong"]
        & \Hom_{M\xi_\ast}\left(M\xi_\ast \opposite{M\xi}, M\xi_\ast\right)\end{tikzcd}\]
    We have $M\xi_\ast \otimes \BFK \cong M\xi_\ast \opposite{M\xi}$ via $m\otimes a \mapsto m\cdot \lambda(a)$
    where $\lambda:\BFK\rightarrow M\xi_\ast\opposite{M\xi}$ denotes the inclusion.
    So given $\alpha\in \BFK^\circ$ we can define $\Psi(\alpha): M\xi\to M\xi$ via
    $$\epsilon\left(\Psi(\alpha)_\ast \lambda(b)\right) = \langle \alpha, b\rangle\quad\text{for all $b\in\BFK$.}$$
    \begin{lemma}\label{lem:bopincl}
        The map $\Psi: \BFK^\circ \to M\xi^\ast M\xi$
        is a map of Hopf algebras.
    \end{lemma}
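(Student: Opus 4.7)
The plan is to exploit the isomorphism
\[ M\xi^\ast M\xi \cong \Hom_{M\xi_\ast}\left(M\xi_\ast\opposite{M\xi}, M\xi_\ast\right) \]
together with the decomposition $M\xi_\ast\opposite{M\xi} \cong M\xi_\ast \otimes_\ZZ \BFK$. Under these identifications $M\xi^\ast M\xi$ becomes $\Hom_\ZZ(\BFK, M\xi_\ast)$ and the map $\Psi$ is the inclusion $\BFK^\circ = \Hom_\ZZ(\BFK, \ZZ) \hookrightarrow \Hom_\ZZ(\BFK, M\xi_\ast)$ induced by the unit $\ZZ \to M\xi_\ast$. The Hopf algebra axioms for $\Psi$ can then be verified one at a time.

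For the multiplicativity, composition of operations in $M\xi^\ast M\xi$ translates to convolution in $\Hom_\ZZ(\BFK, M\xi_\ast)$ against the coproduct on $\BFK$. By Corollary~\ref{mxicoproduct} this coproduct is the standard Brouder-Frabetti-Krattenthaler one. For $\alpha, \beta \in \BFK^\circ$ the convolution $\Psi(\alpha)\cdot\Psi(\beta)$ factors through $\ZZ \subset M\xi_\ast$ and so reduces to the usual dual-algebra product on $\BFK^\ast = \BFK^\circ$, giving $\Psi(\alpha\beta) = \Psi(\alpha)\Psi(\beta)$.

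The main obstacle is the comultiplicativity. The coproduct $\delta$ on $M\xi^\ast M\xi$ encodes the Leibniz rule $\phi(xy) = \sum \phi^{(1)}(x)\phi^{(2)}(y)$ and is dual, under the same isomorphism, to the multiplication of $M\xi_\ast\opposite{M\xi}$. The use of $\opposite{M\xi}$ -- equivalently, the flip $T$ entering the spectrum-level multiplication on $M\xi \wedge \opposite{M\xi}$ -- is what forces the \emph{flipped} coproduct convention on $\BFK^\circ$. Granting that this is the flip responsible for turning $\BFK^\ast$ into $\BFK^\circ$ in the Drinfeld-double sense, the desired identity
\[ \delta\Psi(\alpha) = \sum\Psi(\alpha^{(1)})\otimes\Psi(\alpha^{(2)}), \qquad \Delta_{\BFK^\circ}(\alpha) = \sum\alpha^{(1)}\otimes\alpha^{(2)}, \]
falls out. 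Nailing down the direction of this flip is the delicate step.

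Finally, unit, counit, and antipode compatibility are largely formal: the unit of $\BFK^\circ$ is $\epsilon_\BFK$, and $\Psi(\epsilon_\BFK) = \id_{M\xi}$ is the unit of $M\xi^\ast M\xi$; the counit of $\BFK^\circ$ (evaluation at $1\in\BFK$) corresponds under $\Psi$ to the counit of $M\xi^\ast M\xi$; and the antipode is then uniquely determined by the rest of the structure, so $\Psi$ preserves it automatically.
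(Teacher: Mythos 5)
Your overall strategy is the same as the paper's (view $M\xi^\ast M\xi$ as $\Hom_{M\xi_\ast}(M\xi_\ast\opposite{M\xi},M\xi_\ast)$, identify $\Psi$ with the inclusion of the $\ZZ$-valued functionals on $\BFK$, and play composition of operations against the coproduct on $M\xi_\ast\opposite{M\xi}$, which restricts to the $\BFK$-diagonal by Corollary~\ref{mxicoproduct}), but the two steps that carry the actual content are asserted rather than proved. First, the claim that \enquote{composition of operations translates to convolution against the coproduct on $\BFK$} is exactly the statement that for any operation $\rho$ one has $\rho_\ast = \bigl(\id\otimes\epsilon(\rho_\ast(-))\bigr)\circ\Delta$ on $M\xi_\ast\opposite{M\xi}$, i.e.\ $\Psi(\alpha)_\ast\lambda(b)=\sum\lambda(b')\langle\alpha,b''\rangle$. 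In the non-commutative setting this is not automatic (the $M\xi_\ast$-module structure and the position of the factors matter), and the paper proves it by an explicit smash-level computation with $Z=\sum e_k\land f_k$; your proposal uses it as an unproven black box, and only after it is in place does multiplicativity of $\Psi$ reduce to the dual-algebra product as you say.

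Second, and more seriously, you explicitly defer \enquote{nailing down the direction of the flip} in the comultiplicativity step. But that direction \emph{is} the lemma: whether $\Psi$ is comultiplicative into $\BFK^\circ$ (flipped coproduct) or into $\BFK^\ast$ is precisely what has to be decided, and an appeal to \enquote{the flip $T$ in $\opposite{M\xi}$} does not decide it. The paper settles this by computing $\Psi(\alpha)_\ast\lambda(b_1b_2)$ in two ways: once using the formula above together with comultiplicativity of $\lambda$, giving $\sum\Psi(\alpha')_\ast(\lambda(b_1))\cdot\Psi(\alpha'')_\ast(\lambda(b_2))$ with $\Delta\alpha$ taken in $\BFK^\ast$; and once using the reversed Leibniz rule $\rho_\ast(xy)=\sum(\rho'')_\ast(x)\,(\rho')_\ast(y)$, which is where the opposite multiplication concretely enters. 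Comparing the two yields $\Psi(\alpha')=\Psi(\alpha)''$ and $\Psi(\alpha'')=\Psi(\alpha)'$, i.e.\ comultiplicativity for the flipped coproduct. Without an argument of this kind your proof establishes at most that $\Psi$ is either comultiplicative or anti-comultiplicative, which is not enough to feed into the Drinfeld double construction later; so the gap is genuine and sits exactly at the step you flagged as delicate.
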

    \begin{proof}
        We first show how to compute the induced map
        $\Psi(\alpha)_\ast: M\xi_\ast\opposite{M\xi} \to M\xi_\ast\opposite{M\xi}$
        from $\alpha$.
        Let $\rho:M\xi\to M\xi$ be any operation.
        We have a commutative diagram
        \[\begin{tikzcd}
            {M\xi_\ast\opposite{M\xi}} && {M\xi_\ast\opposite{M\xi} \otimes_{M\xi_\ast} M\xi_\ast\opposite{M\xi}} \\
            \\
            && {M\xi_\ast\opposite{M\xi}}
            \arrow["\Delta", from=1-1, to=1-3]
            \arrow["{\id \otimes \epsilon(\rho_\ast(-))}", from=1-3, to=3-3]
            \arrow["{\rho_\ast}"', from=1-1, to=3-3]
        \end{tikzcd}\]
        To see that this commutes assume $Z\in M\xi_\ast\opposite{M\xi}$ with $Z=\sum e_k\land f_k$
        and $\Delta(Z) = \sum_i (a_i\land b_i) \otimes \sum_j (c_j\land d_j)$.
        Then $\sum a_i\land b_ic_j\land d_j = \sum e_k\land 1\land f_k$ and one gets
        \begin{align*}
            \left(\id\otimes \epsilon(\rho_\ast(-))\right)\Delta(Z)
            &=
            \sum_{i,j} (a_i\land b_i) \otimes (c_j \rho d_j)
            \\ &\cong
            \sum_{i,j} a_i\land b_ic_j \rho d_j
            \\ &= (\id\land\mu)(\sum_{i,j} a_i\land b_ic_j \land \rho d_j)
            \\ &= (\id\land\mu)(\id\land\id\land\rho)(\sum_{i,j} a_i\land b_ic_j \land d_j)
            \\ &= (\id\land\mu)(\id\land\id\land\rho)(\sum_k e_k\land 1\land f_k)
            \\ &= \sum_k e_k\land \rho f_k
            \\ &= \rho_\ast(Z)
        \end{align*}
        where $\mu\colon{}M\xi\land M\xi\rightarrow M\xi$ is the multiplication and the $\cong$ line
        uses the isomorphism
        $M\xi_\ast\opposite{M\xi}\otimes_{M\xi_\ast} M\xi_\ast \cong M\xi_\ast\opposite{M\xi}$.

        We can now show that $\Psi$ is multiplicative:
        let $\alpha,\beta\in\BFK^\circ$ and $b\in \BFK$.
        The product $\alpha\beta$ is characterized by
        \[\langle \alpha\beta,b\rangle = \sum \langle\alpha,b'\rangle \langle\beta,b''\rangle\]
        The computation above, together with the comultiplicativity of $\lambda$ gives
        \[\Psi(\alpha)_\ast \lambda(b) = \sum \lambda(b') \langle \alpha,b''\rangle.\]
        We then have
        \begin{align*}
            \Psi(\beta)_\ast \Psi(\alpha)_\ast(\lambda(b))
            &= \sum \Psi(\beta)_\ast \lambda(b') \langle \alpha,b''\rangle
            \\ &= \sum \lambda(b') \langle \beta,b''\rangle \langle \alpha,b'''\rangle
            \\ &= \sum \lambda(b') \langle \beta\alpha, b''\rangle
            \\ &= \Psi(\beta\alpha)_\ast (\lambda(b))
        \end{align*}

        It remains to show that $\Psi$ is comultiplicative.
        Let $b_1,b_2\in\BFK$. We find
        \begin{align*}
            \Psi(\alpha)_\ast \lambda(b_1b_2)
            &= \sum \lambda(b_1') \lambda(b_2') \langle \alpha, b_1''b_2''\rangle
            \\&= \sum \lambda(b_1') \lambda(b_2') \langle \alpha', b_1''\rangle \langle \alpha'', b_2''\rangle
            \\&= \sum \lambda(b_1') \langle \alpha', b_1''\rangle \cdot \sum \lambda(b_2') \langle \alpha'', b_2''\rangle
            \\&= \sum \Psi(\alpha')_\ast(\lambda(b_1)) \cdot   \Psi(\alpha'')_\ast(\lambda(b_2))
        \end{align*}
        Here $\Delta \alpha = \sum \alpha' \otimes \alpha''$
        uses the coproduct in $\BFK^\ast$, not $\BFK^\circ$.

        Now note that for a general $\rho\colon{}M\xi\rightarrow M\xi$ we have
        \[\rho_\ast(xy) = \sum (\rho'')_\ast(x) \cdot (\rho')_\ast(y).\]
        This is due to the reversion of the multiplication in the second factor of $M\xi_\ast\opposite{M\xi}$.
        It follows that
        \begin{align*}
            \Psi(\alpha)_\ast \lambda(b_1b_2)
            &= \Psi(\alpha)_\ast (\lambda(b_1)\lambda(b_2))
            \\&= \sum \left(\Psi(\alpha)''\right)_\ast(\lambda(b_1))
                \cdot   \left(\Psi(\alpha)'\right)_\ast(\lambda(b_2))
        \end{align*}
        Hence
        $\Psi(\alpha') = \Psi(\alpha)''$, $\Psi(\alpha'') = \Psi(\alpha)'$ and
        $\Psi:\BFK^\circ \to M\xi^\ast M\xi$ is indeed comultiplicative.
    \end{proof}

    \begin{thm}\label{lem:doublehomo}
        The map $\rho \colon{} D(\BFK) \rightarrow M\xi^\ast M\xi$
        with $b_1 \otimes b_2 \mapsto \Phi(b_1) \circ \Psi(b_2)$ is a map of Hopf algebras.
    \end{thm}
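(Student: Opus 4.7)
The plan is to verify separately that $\rho$ is a coalgebra map and an algebra map; the coalgebra part will be essentially formal, while the algebra part reduces to checking the defining cross-commutation relation of the Drinfeld double.

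For the coalgebra property, recall that $D(\BFK) = \BFK \otimes \BFK^\circ$ carries the tensor product coalgebra structure, so
\[ \Delta(b_1 \otimes b_2) = \sum \left(b_1^{(1)} \otimes b_2^{(1)}\right) \otimes \left(b_1^{(2)} \otimes b_2^{(2)}\right). \]
Using that both $\Phi|_\BFK$ (comultiplicative by the Lemma preceding Cor.~\ref{mxicoproduct}, combined with Cor.~\ref{mxicoproduct} itself) and $\Psi$ (Lemma \ref{lem:bopincl}) are coalgebra maps, and that multiplication in the bialgebra $M\xi^\ast M\xi$ is automatically comultiplicative, one gets
\[ \delta\bigl(\Phi(b_1)\Psi(b_2)\bigr) = \delta\bigl(\Phi(b_1)\bigr)\cdot\delta\bigl(\Psi(b_2)\bigr) = \sum \Phi(b_1^{(1)})\Psi(b_2^{(1)}) \otimes \Phi(b_1^{(2)})\Psi(b_2^{(2)}), \]
which is $(\rho\otimes\rho)\Delta(b_1\otimes b_2)$.

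For the algebra property, the crucial observation is that $D(\BFK)$ is generated as an algebra by the sub-Hopf-algebras $\BFK$ and $\BFK^\circ$ subject only to the standard Drinfeld cross relation. Since $\Phi|_\BFK$ and $\Psi$ are already algebra maps, it suffices to verify that this cross relation is respected. Explicitly, for $\xi \in \BFK^\circ$ and $b \in \BFK$ the relation (cf.~\cite{MR2654259}*{III 2.4}) reads
\[ (1 \otimes \xi)(b \otimes 1) = \sum \langle \xi_{(1)}, S^{-1}(b_{(3)})\rangle \, \langle \xi_{(3)}, b_{(1)}\rangle \, (b_{(2)} \otimes \xi_{(2)}), \]
so the task is to check
\[ \Psi(\xi) \circ \Phi(b) = \sum \langle \xi_{(1)}, S^{-1}(b_{(3)})\rangle \, \langle \xi_{(3)}, b_{(1)}\rangle \, \Phi(b_{(2)}) \circ \Psi(\xi_{(2)}) \]
as operations on $M\xi$. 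To verify this, I would use the isomorphism $M\xi^\ast M\xi \cong \Hom_{M\xi_\ast}(M\xi_\ast\opposite{M\xi}, M\xi_\ast)$ and evaluate both sides on $\lambda(c)$ for arbitrary $c \in \BFK$. On the left, the computation proceeds by iterated application of the formula $\Psi(\alpha)_\ast\lambda(b) = \sum \lambda(b_{(1)})\langle \alpha, b_{(2)}\rangle$ from Lemma \ref{lem:bopincl}, combined with an analogous analysis of $\Phi(b)_\ast\lambda(c)$ using the definition of $\Phi$ and the fact that $\lambda$ is a map of Hopf algebras. On the right, the coproducts unfold into sums of iterated pairings; the two expressions then match by coassociativity and the antipode relations.

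The main obstacle is the bookkeeping: $\Phi$ is defined via the three-fold composite $\mu(\id \land\flip)(- \land\id)$, so $\Phi(b)_\ast\lambda(c)$ is not merely the product in $M\xi_\ast\opposite{M\xi}$, and the braiding encoded by $\flip$ must be disentangled carefully when computing both sides. A conceptually cleaner route, which I would pursue as an alternative, is to invoke the universal $R$-matrix characterization: since $M\xi^\ast M\xi$ carries a triangular structure with $R$-matrix $R = \sum a_i \otimes b_i$ (Section \ref{sec:triangular}) and $D(\BFK)$'s defining $R$-matrix is $\sum x_i \otimes x^i$ for dual bases of $\BFK$ and $\BFK^\circ$, showing that $(\rho\otimes\rho)$ sends the latter to the former would recover the cross relation automatically from the defining identity $\opposite{\delta}(x) = R\delta(x)R^{-1}$. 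Either way, the identity ultimately rests on matching the algebraic braiding of $D(\BFK)$ with the topologically induced braiding on $M\xi^\ast M\xi$ coming from the flip of smash factors.
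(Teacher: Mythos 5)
Your proposal is correct and follows essentially the same route as the paper: comultiplicativity of $\rho$ is formal (tensor-product coalgebra structure on $D(\BFK)$ plus the established coalgebra maps $\Phi|_{\BFK}$ and $\Psi$), and multiplicativity reduces to checking the Drinfeld cross relation between the images of $\BFK$ and $\BFK^\circ$, verified by a coproduct/pairing computation. The paper executes that last step by deriving the operator identity $\Psi(\alpha)\circ\Phi_z=\sum\Phi_{(\psi'''\land\psi')(z)}\circ\psi''$ directly and matching it against a published multiplication formula for the double (being mindful that conventions for the cross relation, antipode placement and Sweedler indices in $\BFK^\circ$ versus $\BFK^\ast$ vary between sources), which is the same bookkeeping you propose to do by evaluating on $\lambda(c)$; only your suggested shortcut via the $R$-matrix would need more justification, since matching $R$-matrices alone does not automatically yield the cross relation for $\rho$.
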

    \begin{proof}
        The map is clearly comultiplicative, so it only remains to verify that the
        commutation relation between some $b\in \BFK \subset D(\BFK)$
        and $c\in \BFK^\circ \subset D(\BFK)$ is preserved under $\rho$.

        It is straightforward to determine the commutation relations between a 
        $\psi = \Psi(\alpha)$ with $\alpha\in \BFK^\circ$ 
        and a $\Phi_z$ for $z\in M\xi_\ast\opposite{M\xi}$:
        Let $\Delta^{(3)} \alpha = \sum \alpha'\otimes \alpha''\otimes \alpha'''$ in $B^\ast$ (not $B^\circ$),
        $\psi'=\Psi(\alpha')$, $\psi''=\Psi(\alpha'')$, $\psi'''=\Psi(\alpha''')$
        so that $\psi(uvw) = \sum \psi'''(u) \psi''(v) \psi'(w)$.
        Let formally $z= \sum p_i\land q_i$. Then
        \begin{align*} 
            \psi(\Phi_z(x)) &= \sum \psi(p_ixq_i) = \sum \psi'''(p_i) \psi''(x) \psi'(q_i)
            = \sum \Phi_{(\psi'''\land \psi')(z)}(\psi''(x)).
        \end{align*}
        It remains to convince oneself that the same relation exists in the Drinfeld double $D(\BFK)$. 
        Unfortunately, most published accounts of the Drinfeld doubling construction 
        are too timid to work out explicit formulas for the multiplication in $D(\BFK)$. 
        A laudable exception is \cite{MR3709143} where the product formula is given in section 3.3.
        Letting $\BFK^\circ\rightsquigarrow A$, $1\rightsquigarrow x$, $\psi\rightsquigarrow a$, 
        $z\rightsquigarrow y$ and $1\rightsquigarrow b$ in their formula (3.1) gives 
        \[ (1\otimes\psi) (z\otimes 1) = \sum \langle\psi',z'''\rangle \langle\psi''',\chi^{-1}(z')\rangle z'' \otimes \psi'' \]
        where $\chi$ denotes the antipode in $\BFK$.
        We entrust it to the skeptical reader to verify that the $\langle\psi',z'''\rangle \langle\psi''',\chi^{-1}(z')\rangle z''$
        can indeed be neatly aligned with our $\Phi_{(\psi'''\land \psi')(z)}$.
    \end{proof}
\end{subsection}

\end{section}


\bibliographystyle{unsrt}
\bibliography{cnbib}

\end{document}